\title[Temporal decays and asymptotic behaviors for a kinetic-fluid system]{Temporal decays and asymptotic behaviors  for a Vlasov equation with a flocking term
coupled to incompressible fluid flow}
\author[Young-Pil Choi]{Young-Pil Choi}
\address[Young-Pil Choi]{\newline Department of Mathematics \newline Yonsei University, Seoul 03722, Korea (Republic of)}
\email{ypchoi@yonsei.ac.kr}
\author[Kyungkeun Kang]{Kyungkeun Kang}
\address[Kyungkeun Kang]{\newline Department of Mathematics \newline Yonsei University, Seoul 03722, Korea (Republic of)}
\email{kkang@yonsei.ac.kr}
\author[Hwa Kil Kim]{Hwa Kil Kim}
\address[Hwa Kil Kim]{\newline Department of Mathematics Education \newline Hannam University, Daejeon 34430, Korea (Republic of)}
\email{hwakil@hnu.kr}
\author[Jae-Myoung Kim]{Jae-Myoung Kim}
\address[Jae-Myoung Kim]{\newline Department of Mathematics Education \newline Andong National University, Andong, 36729, Korea (Republic of)}
\email{jmkim02@andong.ac.kr}
\DeclareMathOperator*{\esssup}{ess\,sup}
\begin{document}

\numberwithin{equation}{section}

\newtheorem{theorem}{Theorem}[section]
\newtheorem{lemma}{Lemma}[section]
\newtheorem{corollary}{Corollary}[section]
\newtheorem{proposition}{Proposition}[section]
\newtheorem{remark}{Remark}[section]
\newtheorem{definition}{Definition}[section]

\newcommand{\bke}[1]{\left( #1 \right)}
\newcommand{\bkt}[1]{\left[ #1 \right]}
\newcommand{\bkv}[1]{\left| #1 \right|}
\newcommand{\bket}[1]{\left\{ #1 \right\}}

\renewcommand{\theequation}{\thesection.\arabic{equation}}
\renewcommand{\thetheorem}{\thesection.\arabic{theorem}}
\renewcommand{\thelemma}{\thesection.\arabic{lemma}}
\newcommand{\bbr}{\mathbb R}
\newcommand{\T}{\mathbb T}
\newcommand{\R}{{\mathbb R }}
\newcommand{\Z}{{\mathbb Z }}
\newcommand{\bbe}{\mathbb E}
\newcommand{\abs}[1]{\left| #1 \right|}
\newcommand{\bbz}{\mathbb Z}
\newcommand{\bbn}{\mathbb N}
\newcommand{\bbs}{\mathbb S}
\newcommand{\bbp}{\mathbb P}
\newcommand{\bbt}{\mathbb T}
\newcommand{\norm}[1]{\left\Vert #1 \right\Vert}
\newcommand{\Ass}{\textup{(\textbf{A})\,}}
\newcommand{\ddiv}{\textrm{div}}
\newcommand{\bn}{\bf n}
\newcommand{\rr}[1]{\rho_{{#1}}}
\newcommand{\rabs}[1]{\left. #1 \right|}
\newcommand{\thh}{\theta}
\newcommand{\mw}{\mathcal{W}}
\def\charf {\mbox{{\text 1}\kern-.24em {\text l}}}
\renewcommand{\arraystretch}{1.5}

\newenvironment{mainthm}{{\par\noindent\bf
            Proof of Theorem \ref{main-thm}. }}{\hfill\fbox{}\par\vspace{.2cm}}
\newenvironment{thm2}{{\par\noindent\bf
            Proof of Theorem \ref{thm2}. }}{\hfill\fbox{}\par\vspace{.2cm}}

\newenvironment{thm3}{{\par\noindent\bf
            Proof of Theorem \ref{thm3}. }}{\hfill\fbox{}\par\vspace{.2cm}}

\newenvironment{lem3}{{\par\noindent\bf
            Proof of Lemma \ref{r-300}. }}{\hfill\fbox{}\par\vspace{.2cm}}
\newenvironment{lem4}{{\par\noindent\bf
            Proof of Lemma \ref{r-400}. }}{\hfill\fbox{}\par\vspace{.2cm}}

\newenvironment{lem5}{{\par\noindent\bf
            Proof of Lemma \ref{local-ex-u}. }}{\hfill\fbox{}\par\vspace{.2cm}}

%%%%%%%%Young-Pil%%%%%%%%

\newcommand{\inttr}{\int_{\T^d \times \R^d}}
\newcommand{\intt}{\int_{\T^d}}
\newcommand{\inttrd}{\int_{\T^{2d} \times \R^{2d}}}
\newcommand{\lt}{\left}
\newcommand{\rt}{\right}
\newcommand{\pa}{\partial}
\newcommand{\ml}{\mathcal L}
\newcommand{\mc}{\mathcal C}
\newcommand{\e}{\varepsilon}
\newcommand{\bq}{\begin{equation}}
\newcommand{\eq}{\end{equation}}
\newcommand{\mf}{\mathcal{F}}
\newcommand{\erf}{\mbox{erf}}
\newcommand{\pp}{\mathcal{P}}

%%%%%%%%%%%%%%%%%%%%%%%%

\thanks{}

\subjclass[2010]{}

\keywords{Temopral decay, asymptotic behavior, kinetic-fluid equations, incompressible viscous fluid, kinetic Cucker--Smale equation}

\begin{abstract}
We are concerned with large-time behaviors of solutions for Vlasov--Navier--Stokes equations in two dimensions and Vlasov-Stokes system in three dimensions including the effect of velocity alignment/misalignment. We first revisit the large-time behavior estimate for our main system and refine assumptions on the dimensions and a communication weight function. In particular, this allows us to take into account the effect of the misalignment interactions between particles. We then use a sharp heat kernel estimate to obtain the exponential time decay of fluid velocity to its average in $L^\infty$-norm. For the kinetic part, by employing a certain type of Sobolev norm weighted by modulations of averaged particle velocity, we prove the exponential time decay of the particle distribution, provided that local particle distribution function is uniformly bounded. Moreover, we show that the support of particle distribution function in velocity shrinks to a point, which is the mean of averaged initial particle and fluid velocities, exponentially fast as time goes to infinity. This also provides that for any $p \in [1,\infty]$, the $p$-Wasserstein distance between the particle distribution function and the tensor product of the local particle distributions and Dirac measure at that point in velocity converges exponentially fast to zero as time goes to infinity. 
\end{abstract}

\date{\today}

\maketitle\centerline{\date}
%
%
%\tableofcontents

%%%%%%%%%%%%%%%%%%%%%%%%%%%%%%%%%%%%%
%
%
%. \section{Introduction}\label{sect:intro}
%
%
%%%%%%%%%%%%%%%%%%%%%%%%%%%%%%%%%%%%%
\section{Introduction}\label{sect:intro}
\setcounter{equation}{0}

In this paper, we consider a situation where a large number of interacting particles are immersed in a viscous fluid. More precisely, let $f = f(x,v,t)$ be the one-particle distribution function in the phase space $(x,v) \in \T^d \times \R^d$ at time $t \in \R_+$, and $u = u(x,t) \in \R^d$ and $p = p(x,t) \in \R$ be the bulk velocity and pressure of the incompressible fluid, respectively. Here $\T^d = (\R/2\pi\Z)^d$ with $d \geq 1$ is the spatial periodic domain. Then our main kinetic-fluid system reads as
\begin{align}
\begin{aligned} \label{main}
&\partial_t f +v \cdot \nabla_xf+\nabla_v \cdot (F(f,u)f)=0, \quad (x,v,t) \in \T^d \times \R^d \times \R_+,\\
&\partial_t u +(1-\delta_{d,3})(u\cdot\nabla_x) u - \mu\Delta_x u +\nabla_x p=-\int_{\R^d}(u-v)f\,dv, \\
&\nabla_x \cdot u=0,
\end{aligned}
\end{align}
subject to initial data
\begin{equation}\label{initial}
\lt(f(x,v,t), u(x,t)\rt)|_{t=0} =: \lt(f_0(x,v), u_0(x)\rt), \quad (x,v) \in \T^d \times \R^d,
\end{equation}
where $\delta_{d,3}$ denotes the Kronecker delta function and $\mu>0$ is the viscosity coefficient. Throughout this paper, we set $\mu=1$ for simplicity. Thus if $d=3$, the fluid system in \eqref{main} becomes Stokes equations, otherwise it becomes the Navier--Stokes equations. The force term $F(f,u)$ in the kinetic equation consists of velocity alignment force $F_a(f)$ given by
\[
F_{a}(f)(x,v,t) = \inttr\psi(x- y) (w- v)f (y,w,t)\,dydw
\]
with a communication weight function $\psi$ and drag force $u-v$, i.e., $F(f,u) = F_a(f) + u-v$. In the present work,
the communication weight function $\psi\in \mc^2(\T^d) $ is even, i.e., $\psi(x) = \psi(-x)$, and we  assume further that there are some numbers $\psi_m, \psi_M \in \R$ such that
\begin{equation}\label{april02-10}
\psi_m\le \psi (x)\le \psi_M,\quad x\in \T^d.
\end{equation}
%Without loss of generality, we also assume that total mass of $f$ is normalized, i.e.
%\[
%\inttr f_0(x,v)\,dxdv = 1.
%\]

The study on mathematical modeling describing the interactions between particle and fluid has received a bulk of attention from variable research fields of such as biotechnology, medicine and sedimentation analysis, see \cite{BBJM05, BDM03, O81} and references therein. On the other hand, dispersed particles immersed in a fluid can be modeled by kinetic-fluid type interactions. If the coupling between particles and fluid is not taken into account, i.e., the drag force $u-v$ in $F(f,u)$, then the kinetic part in \eqref{main} becomes the kinetic Cucker--Smale equation \cite{CFRT10,HL09}. This equation can be rigorously derived from the celebrated Cucker--Smale model \cite{CS07} describing flocking behaviors as the number of particles goes to infinity \cite{CCHS19, CFRT10, HL09}. The system \eqref{main} is thus a coupled system consisting of kinetic Cucker--Smale equation and incompressible viscous fluid coupled through the drag force. This is first proposed in \cite{B-C-H-K} to describe the dynamics of self-propelled particles that are influenced by neighboring fluids. We refer to \cite{B-C-H-K} and references therein for a detailed description of the modeling and relevant literatures.

The existence theories for weak and strong solutions of the coupled kinetic-fluid equations have been well developed: the global-in-time existence of weak solutions for the Vlasov-type or Vlasov--Fokker--Planck equation coupled with homogeneous/inhomogeneous fluid equations are obtained in \cite{B-C-H-K, B-D09, CCK16, C-K, CL16, Ham, HMMM17, M-V, MPP18, WY14, Y}. For the local/global-in-time existence of strong solutions, we refer to \cite{BCHK142,B-D,CDM11, CKL13,DL13,LMW17}. The collision-type interactions between particles are also considered, and global-in-time existence of weak solutions are found in \cite{CY20,YY18} and local-in-time existence of classical solution is obtained in \cite{CLYpre, Math}. We also refer to \cite{CCK16, CJ20,CJpre,G-J1,G-J2, MV08} for the hydrodynamic limits and \cite{C17} for the finite-time breakdown of $\mc^2$-regularity of solutions.

Even though these fruitful studies on the existence theories for the kinetic-fluid systems, there are few available literature on the large-time behavior of solutions in the absent of diffusion term, see \cite{CDM11,DL13,LMW17} for the case with diffusion. In the case of without diffusion, it is not clear to find the nontrivial equilibria. In \cite{BCHK14}, see also \cite{CCK16}, the large-time behavior for the system \eqref{main} showing the alignment between particle and fluid velocities is first investigated based on the Lyapunov functional approach. More precisely, let us define averaged quantities:
\bq\label{aver}
v_c(t) :=  \frac{\inttr vf\,dxdv}{M(t)}  \quad \mbox{and} \quad u_c(t) := \intt u\,dx,
\eq
where $M(t):= \inttr f\,dxdv$. Using these newly defined functions, we introduce a Lyapunov function:
\bq\label{Lyap}
\mathcal{L}(f(t),u(t)):= \frac12\inttr |v - v_c(t)|^2 f\,dxdv + \frac12\intt |u - u_c(t)|^2\,dx + \frac1{2(1 + M(t))}|u_c(t) - v_c(t)|^2.
\eq
Note that the first two terms in \eqref{Lyap} are modulated kinetic energies measuring the fluctuation of velocities from the corresponding averaged quantities. The third term measures the difference between the averages of particle and fluid velocities. By employing the above Lyapunov function or its variant, in \cite{BCHK14, CCK16}, see \cite{C16} for the coupling with compressible fluids, the exponential decay estimate is discussed under suitable a priori assumptions on the regularity of solutions and uniform-in-time integrability of the local particle density. Very recently, under smallness conditions of sufficiently regular initial data, the uniform-in-time integrability assumption on the local particle density is removed in \cite{HMM20} and the large-time behavior of solutions is rigorously established for the system \eqref{main} without the nonlocal velocity alignment force $F_a$. We also refer to \cite{BCHK142,C16,CK15} for the inhomogeneous/compressible fluid cases.

Before proceeding further, we introduce several notations used throughout the current work. For functions, $f(x,v)$ and $u(x)$, $\|f\|_{L^p}$ and $\|u\|_{L^p}$ represent the usual $L^p(\T^d \times \R^d)$- and $L^p(\T^d)$-norms, respectively. We denote by $C$ a generic, not necessarily identical, positive constant, independent of $t$. For any nonnegative integer $k$ and $p \in [1,\infty]$, $W^{k,p} = W^{k,p}(\T^d)$ stands for the $k$-th order $L^p$ Sobolev space, and $W^{k,p}_{\sigma}(\T^d)=\bket{u\in W^{k,p}(\T^d)\,:\, \nabla_x \cdot u=0}$. In case $k=0$, we typically write $W^{0, q}_{\sigma}(\T^d)$ as $L^q_{\sigma}(\T^d)$, and we denote by $H^k=H^k(\T^d) = W^{k,2}(\T^d)$. $\mc^k([0,T];E)$ is the set of $k$-times continuously differentiable functions from an interval $[0,T] \subset \R$ into a Banach space $E$, and $L^p(0,T;E)$ is the set of functions from an interval $(0,T)$ to a Banach space $E$. $\nabla^k$ denotes any partial derivative $\pa^\alpha$ with multi-index $\alpha$, $|\alpha| = k$.

\subsection{Main result}

Our main contribution is to provide the estimate of asymptotic behavior of strong solutions. We first revisit the large-time behavior estimate of solutions for the system \eqref{main}. We refine the assumptions on the dimensions and the communication weight $\psi$ compared to the previous results \cite{BCHK14,BCHK142,CCK16,C16}, where the assumptions on the dimension $d \geq 3$ and $\psi_m \geq 0$ are required. However, our careful analysis shows that the drag force and the small initial mass $M_0 := M(0)$ can even cope with the misalignment interactions between particles, i.e., the communication weight $\psi$ can be negative. Throughout this paper, without loss of generality, we may assume $M_0 \leq 1$. In addition, we  relax the dimension restriction and provide the exponential decay of the Lyapunov function for $d\geq2$ and $\psi_m > -\infty$. To be more specific, our first main result reads as follows.

\begin{theorem}\label{thm_lt}Let $d\geq2$ and
$(f,u)$ be a solution to the system \eqref{main} with sufficient integrability. Suppose the following integrability condition on the local particle density holds:
\[
\|\rho_f\|_{L^\infty(\R_+;L^\infty(\T^d))} < \infty  \quad \mbox{with} \quad \rho_f(x,t) := \int_{\R^d} f(x,v,t)\,dv.
\]
In case $\psi_m < 0$, the initial mass $M_0 = M_0(\psi_m, \|\rho_f\|_{L^\infty}) > 0$ is assumed to be sufficiently small. Otherwise, $M_0$ is arbitrary. 
Then we have
\[
\mathcal{L}(f(t),u(t)) \leq C\mathcal{L}(f_0,u_0) e^{-Ct}  \quad \forall \,t \geq 0,
\]
where $C > 0$ is independent of $t$.
\end{theorem}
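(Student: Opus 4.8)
The plan is to differentiate the Lyapunov functional $\mathcal{L}(f(t),u(t))$ in time and show that $\frac{d}{dt}\mathcal{L} \leq -C\mathcal{L}$ for a positive constant $C$, from which the exponential decay follows by Gr\"onwall's inequality. First I would compute the time derivatives of the three constituent pieces separately, using the equations in \eqref{main}. A key preliminary fact is that the total mass $M(t)$ is conserved, so $M(t)=M_0$ for all $t$; this simplifies the averaged quantities and the weight $1/(2(1+M(t)))$ in the third term. I would also record the evolution of $v_c(t)$ and $u_c(t)$: integrating the kinetic equation against $v$ and the fluid equation over $\T^d$, the alignment force $F_a(f)$ and the advection/viscosity/pressure terms integrate to zero (using $\psi$ even and periodicity), leaving only the drag coupling, so that $M_0\,\dot v_c$ and $\dot u_c$ are governed solely by the exchange term $\inttr (u-v)f\,dxdv$.

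\medskip
\noindent\textbf{Differentiating the modulated energies.} For the kinetic modulated energy $\frac12\inttr |v-v_c|^2 f\,dxdv$, testing the kinetic equation against $\frac12|v-v_c|^2$ produces two types of contributions: the flocking term $F_a(f)$ yields, after symmetrization in $(x,v)\leftrightarrow(y,w)$ and using that $\psi$ is even, a term of the form $-\frac12\inttrd \psi(x-y)|(v-v_c)-(w-v_c)|^2 f f'\,\cdots$, which is dissipative when $\psi\ge 0$ but must be controlled by $\psi_m$ when $\psi$ can be negative. The drag term yields $-\inttr (v-v_c)\cdot(u-v)f$. For the fluid modulated energy $\frac12\intt|u-u_c|^2\,dx$, testing the fluid equation against $u-u_c$ gives the viscous dissipation $-\|\nabla_x u\|_{L^2}^2$ (the convective and pressure terms drop by incompressibility, and in $d=3$ the convective term is absent entirely), plus a drag contribution $-\intt(u-u_c)\cdot\int_{\R^d}(u-v)f\,dv$. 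For the coupling term, differentiating $\frac{1}{2(1+M_0)}|u_c-v_c|^2$ and inserting the expressions for $\dot u_c,\dot v_c$ produces a cross term that, by design of the coefficient $1/(1+M_0)$, is meant to cancel the leading-order pieces of the drag contributions from the first two energies.

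\medskip
\noindent\textbf{Closing the estimate and the main obstacle.} After summing, the drag terms should combine into a manifestly dissipative quadratic form, roughly $-\inttr |(u-u_c)-(v-v_c)+(u_c-v_c)|^2 f$ up to the mass normalization, together with the viscous dissipation and the flocking dissipation. The decisive step is to bound the sum below by a constant multiple of $\mathcal{L}$ itself. This requires a Poincar\'e-type inequality on $\T^d$ to convert the viscous dissipation $\|\nabla_x u\|_{L^2}^2$ into control of the fluid fluctuation $\|u-u_c\|_{L^2}^2$, and it is here that the hypothesis $d\geq 2$ and the uniform bound $\|\rho_f\|_{L^\infty}<\infty$ enter, the latter being needed to dominate the flocking and coupling cross terms by the modulated energies. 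The main obstacle, and the novelty relative to earlier work, is handling the misalignment case $\psi_m<0$: the flocking term is then no longer sign-definite, so I would estimate its negative part by $|\psi_m|$ times a quantity proportional to $M_0$ times the kinetic modulated energy, and use the smallness of $M_0=M_0(\psi_m,\|\rho_f\|_{L^\infty})$ to absorb this bad term into the drag and viscous dissipation. Verifying that a single $M_0$ threshold makes the full quadratic form coercive, uniformly in $t$, is the technical heart of the argument.
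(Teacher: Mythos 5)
Your proposal is correct and follows essentially the same route as the paper's proof in Section 3: differentiating $\mathcal{L}$ yields the exact dissipation identity $\frac{d}{dt}\mathcal{L}(f(t),u(t)) = -\mathcal{D}(f(t),u(t))$ (drag, viscous, and alignment terms), and the coercivity $\mathcal{D}\geq C\mathcal{L}$ is then established precisely as you outline---Young's inequality applied to $u-v=(u-u_c)+(u_c-v_c)-(v-v_c)$, the estimate $\intt |u-u_c|^2\rho_f\,dx\leq C\|\rho_f\|_{L^\infty}\|\nabla_x u\|_{L^2}^2$ combined with Poincar\'e's inequality, the lower bound of the alignment term by $\psi_m M_0\inttr|v-v_c|^2 f\,dxdv$, and smallness of $M_0$ when $\psi_m<0$ both to keep $1+\psi_m M_0-\e$ positive and to absorb the $\rho_f$-weighted fluctuation term into the viscous dissipation. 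Gr\"onwall's lemma then closes the argument, exactly as in the paper.
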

\begin{remark} If $\psi_m \geq 0$, then the integrability condition on $\rho_f$ can be relaxed as
\[
\|\rho_f\|_{L^\infty(\R_+;L^\delta(\T^d))} < \infty  \quad \mbox{with} \quad \rho_f(x,t) := \int_{\R^d} f(x,v,t)\,dv,
\]
where $\delta > 0$ is given by
\[
\delta \in \left\{ \begin{array}{ll}
 (1,\infty] & \textrm{if $d=2$}\\[2mm]
 [d/2,\infty] & \textrm{if $d \geq 3$}
\end{array} \right..
\]
\end{remark}

\begin{remark}\label{rmk_limit}Since the total momentum of the system \eqref{main} is conserved in time, see Lemma \ref{lem_energy} (ii) below, 
\[
|u_c(t) - v_c(t)| \leq Ce^{-Ct}
\]
implies
\[
|u_c(t) - v^\infty| + |v_c(t) - v^\infty| \leq Ce^{-Ct},
\]
where $v^\infty:= (M_0v_c(0) + u_c(0))/(1 + M_0)$. Indeed, it follows from Lemma \ref{lem_energy} (ii) that
\[
M(t)v_c(t) + u_c(t) = M_0v_c(0) + u_c(0) \quad \mbox{for} \quad t \geq 0.
\]
This gives $v^\infty = (M_0v_c(t) + u_c(t))/(1 + M_0)$ and subsequently
\[
u_c(t) - v^\infty = u_c(t) - \frac1{1 + M_0}\lt( M_0v_c(t) + u_c(t)\rt) = \frac{M_0}{1 + M_0}\lt( u_c(t) - v_c(t)\rt).
\]
Similarly, we find
\[
v_c(t) - v^\infty = \frac{1}{1 + M_0}\lt( v_c(t) - u_c(t)\rt).
\]
Thus we obtain
\[
|u_c(t) - v^\infty| + |v_c(t) - v^\infty| \leq |u_c(t) - v_c(t)| \leq Ce^{-Ct}
\]
and
\[
v_c(t) \to \frac1{1 + M_0}\lt(M_0v_c(0) + u_c(0)\rt) \quad \mbox{and} \quad u_c(t) \to \frac1{1 + M_0}\lt(M_0v_c(0) + u_c(0)\rt)
\]
exponentially fast as $t \to \infty$. 
\end{remark}

We further improve the asymptotic behavior estimate for the system \eqref{main}. More precisely, we show that the support of the particle distribution function $f$ in velocity asymptotically shrinks to a point, which is the mean of averaged initial particle and fluid velocities, exponentially fast as time tends to infinity. In particular, this implies the exponential convergence of $f$ towards the mono-kinetic distributions in the $p$-Wasserstein metric with $p \in [1,\infty]$, see Section \ref{sec_wasser} for the definition of the $p$-Wasserstein metric and its properties. In order to state our second main result, we recall the notions of weak and strong solutions to the system \eqref{main}-\eqref{initial}.

\begin{definition}\label{strong-sol}
We say that $(f, u)$ is a pair of weak solution to the system  \eqref{main}-\eqref{initial}, provided that
\[
f\in L^{\infty} _{\rm loc} (\R_+; (L^1\cap L^{\infty})( \T^d \times \R^d)) \quad \mbox{and} \quad
u\in  L_{\rm loc}^{\infty}  (\R_+; L^2_{\sigma}( \R^d)) \cap L_{\rm loc}^{2}  (\R_+; W^{1,2}_{\sigma}( \R^d)),
\]
and $(f, u)$ solves the system  \eqref{main}-\eqref{initial} in the sense of distributions (see e.g. \cite[Definition 4.1]{B-C-H-K}).
On the other hand, if
\[
f\in L^{\infty} _{\rm loc} (\R_+; W^{2,q}( \T^d \times \R^d)), \quad
u\in L^{q} _{\rm loc} (\R_+; W^{2,q}_{\sigma}( \R^d)),\quad \mbox{and} \quad u_t \in L^{q} _{\rm loc} (\R_+; L^{q}( \R^d))
\]
for $1\le q<\infty$, and $(f, u)$ solves the system  \eqref{main}-\eqref{initial} pointwise a.e., then $(f, u)$ is called to be a strong solution to the system  \eqref{main}-\eqref{initial}.
\end{definition}

We consider the two dimensional Navier--Stokes system and the three dimensional Stokes system for the fluid part in \eqref{main}. For the system \eqref{main}-\eqref{initial}, the global-in-time existence of strong solutions are studied in \cite{CL16} for two dimensional case and in \cite{BCHK16} for the three dimensional case. In these works, any smallness assumptions on the initial data are not required. On the other hand, the global-in-time existence of weak solutions to the system \eqref{main}-\eqref{initial} is discussed in \cite{B-C-H-K}.

\begin{theorem}\label{main-thm}
Let $d=2, 3$ and $(f_0, u_0)$ satisfy
\[
f_0 \in W^{2,q}( \T^d \times \R^d), \quad u_0\in W^{2,q}(\R^d), \quad 1\le q<\infty,
\]
and the support of $f_0$ in velocity is bounded. Suppose that $(f,u)$ is a strong solution to the system \eqref{main}-\eqref{initial} in the sense of Definition \ref{strong-sol}.
Suppose that the assumptions in Theorem \ref{thm_lt} hold. Then the followings hold:
\begin{itemize}
\item[(i)] For given $k\in \bket{0,1,2}$ and $q\in [1, \infty)$ there exist $p=p(q, k, d, \psi, f_0, u_0)$ and $C$, independent of $t$ and $p$, such that
\[
\lt(\inttr \abs{v-v_c(t)}^p \abs{(\nabla_x^{\alpha} \nabla_v^{\beta} f)(x,v,t)}^q\,dxdv\rt)^{1/p} \le Ce^{-Ct},
\]
where $v_c(t)$ is defined in \eqref{aver}, and $\alpha$ and $\beta$ are multi-indices with $0\le \abs{\alpha}+\abs{\beta}\le k$.

\item[(ii)] The fluid velocity $u$ converges to its average $u_c(t)$  exponentially fast as $t \to \infty$:
\[
\|u(\cdot,t)-u_c(t)\|_{L^{\infty}}\le Ce^{-Ct},
\]
where $C>0$ is independent of $t$ and $u_c(t)$ is defined in \eqref{aver}. Moreover, the vorticity field $\omega := \nabla_x \times u$ vanishes exponentially fast as $t \to \infty$ in $L^p$-norm, where $p$ is dependent on the dimension, namely 
\[
\|\omega(\cdot,t)\|_{L^p(\T^2)}\le Ce^{-Ct} \quad \mbox{for} \quad p<\infty \qquad \mbox{and} \qquad \|\omega(\cdot,t)\|_{L^p(\T^3)}\le Ce^{-Ct} \quad \mbox{for} \quad p < 6,
\]
where $C>0$ is independent of $t$.
\item[(iii)] The support of $f$ in velocity shrinks to a point $v^\infty= (M_0 v_c(0) + u_c(0))/(1+M_0)$ exponentially fast as $t \to \infty$:
\[
\sup_{v \in \Sigma_v(t)} |v - v^\infty| \leq Ce^{-Ct},
\]
where $C>0$ is independent of $t$, and $\Sigma_v$ denotes the $v$-projection of support of $f$.
%\[
%\Sigma_v(t) := \overline{\{v \in \R^d : \exists (x,v) \in \T^d \times \R^d \mbox{ such that } f(x,v,t) \neq 0\}}.
%\]
\end{itemize}
\end{theorem}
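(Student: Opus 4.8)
The plan is to establish the three assertions in the order (ii), (iii), (i), since the fluid estimate feeds the velocity-support estimate and both feed the weighted kinetic estimate. I would first record the decay already in hand: Theorem~\ref{thm_lt} gives $\|u(\cdot,t)-u_c(t)\|_{L^2}\le Ce^{-Ct}$ and $(\inttr|v-v_c(t)|^2 f\,dxdv)^{1/2}\le Ce^{-Ct}$, and Remark~\ref{rmk_limit} gives $|u_c(t)-v^\infty|+|v_c(t)-v^\infty|+|u_c(t)-v_c(t)|\le Ce^{-Ct}$. I would also use conservation of mass $M(t)=M_0$, the identity $\inttr F_a(f)f\,dxdv=0$ (from evenness of $\psi$), which together with the above yields $|\dot v_c(t)|\le Ce^{-Ct}$, and the standing bound $\|\rho_f\|_{L^\infty}<\infty$.

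For (ii), set $\bar u:=u-u_c$, which is mean-zero in $x$. Subtracting the spatial average of the fluid equation removes the pressure and leaves $\bar u$ solving a heat (Stokes) equation driven by the mean-zero part $G$ of the drag $-\int_{\R^d}(u-v)f\,dv$, together with, for $d=2$, the mean-zero convection $-\nabla\cdot(u\otimes u)$. Decomposing $-\int_{\R^d}(u-v)f\,dv=-\rho_f\bar u+\rho_f(v_c-u_c)+\int_{\R^d}(v-v_c)f\,dv$ and using the recorded facts, each summand is $O(e^{-Ct})$ in $L^2$, so $\|G(t)\|_{L^2}\le Ce^{-Ct}$. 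I would then write $\bar u$ by Duhamel against the heat semigroup on mean-zero divergence-free fields and invoke the sharp kernel bounds $\|e^{\tau\Delta}g\|_{L^\infty}\le C\tau^{-d/4}e^{-c\tau}\|g\|_{L^2}$ and $\|\nabla e^{\tau\Delta}g\|_{L^p}\le C\tau^{-(1/2+(d/2)(1/2-1/p))}e^{-c\tau}\|g\|_{L^2}$, the exponential factor coming from the spectral gap of $-\Delta$ on $\T^d$. Splitting the time-convolution at $t/2$ and noting that the singularities are integrable near $\tau=0$ exactly when $d<4$ and, for the gradient, when $p<\infty$ ($d=2$) or $p<6$ ($d=3$), I obtain $\|\bar u\|_{L^\infty}\le Ce^{-Ct}$ and $\|\omega\|_{L^p}=\|\nabla\times\bar u\|_{L^p}\le Ce^{-Ct}$ in the stated ranges. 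The delicate point, and what I expect to be the main obstacle, is the $d=2$ convection: writing $u\otimes u=\bar u\otimes\bar u+$ (terms linear in $\bar u$ carrying the bounded factor $u_c$), I would bound $\|\bar u\otimes\bar u\|_{L^2}\le\|\bar u\|_{L^4}^2\le C\|\bar u\|_{L^2}\|\nabla\bar u\|_{L^2}$ by Ladyzhenskaya and absorb it using a uniform-in-time $H^1$ bound on $\bar u$ from the strong-solution theory together with the $L^2$ decay; the sharpness of the kernel bounds is exactly what pins down the admissible exponents for $\omega$.

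For (iii), I would use the characteristics $\dot X=V$, $\dot V=F_a(f)(X,V,t)+u(X,t)-V$ and estimate $R(t):=\sup_{v\in\Sigma_v(t)}|v-v^\infty|$. Differentiating $|V-v^\infty|^2$ along a maximizing trajectory and writing the drag as $u-V=-(V-v^\infty)+(u-u_c)+(u_c-v^\infty)$, the first term supplies the damping $-R$ while the fluid terms are $O(e^{-Ct})$ by part~(ii) and Remark~\ref{rmk_limit}. For the alignment force I would write $w-v=(w-v_c)-(v-v_c)$; mass conservation and $(\inttr|w-v_c|^2f)^{1/2}\le Ce^{-Ct}$ make the first part $O(e^{-Ct})$, while the second contributes $-a\,R+O(e^{-Ct})$ with $a:=\int_{\T^d\times\R^d}\psi(X-y)f(y,w,t)\,dydw\in[\psi_m M_0,\psi_M M_0]$, hence is $\le-\psi_m M_0\,R+Ce^{-Ct}$. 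Collecting terms yields $\tfrac{d}{dt}R\le-(1+\psi_m M_0)R+Ce^{-Ct}$, and the assumption $1+\psi_m M_0>0$ (the smallness of $M_0$ when $\psi_m<0$, automatic otherwise) closes a Gr\"onwall argument to give $R(t)\le Ce^{-Ct}$.

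For (i), set $A(t):=\inttr|v-v_c(t)|^p|(\nabla_x^\alpha\nabla_v^\beta f)|^q\,dxdv$ and run a weighted $L^q$ energy estimate on $\nabla_x^\alpha\nabla_v^\beta f$. The structural gain is that the friction $-(v-v_c)$ interacting with the weight through $\nabla_v|v-v_c|^p=p|v-v_c|^{p-2}(v-v_c)$ produces a damping whose coefficient, after passing to $\Phi:=A^{1/p}$, equals $-1+d(q-1)/p+o(1)$ and is negative for $p$ large; the transport term drops upon integrating by parts in $x$, the modulation $\partial_t|v-v_c|^p$ carries the factor $|\dot v_c|\le Ce^{-Ct}$, and the remaining drift and nonlocal contributions carry the decaying factors $\|u-u_c\|_{L^\infty}$, $|u_c-v_c|$ and moments of $f$ controlled by Theorem~\ref{thm_lt}, all multiplied by $1/p$-th powers of kinetic norms that grow at most exponentially (the density concentrates, so these norms need not be bounded). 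Choosing $p=p(q,k,d,\psi,f_0,u_0)$ large enough that the friction coefficient is strictly negative and that these $1/p$-th powers are dominated by the exponentially decaying coefficients, the $\Phi$-inequality becomes $\tfrac{d}{dt}\Phi\le-c\Phi+Ce^{-Ct}$, and Gr\"onwall gives $A(t)^{1/p}=\Phi(t)\le Ce^{-Ct}$ with $C$ independent of $t$, as claimed; the cases $k=1,2$ follow identically after accounting for the favorable commutators generated by differentiating the friction term.
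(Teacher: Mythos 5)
Your parts (ii), (iii), and the $k=0$ case of (i) are essentially sound, but they split between "same as the paper" and "genuinely different." For (ii) you follow the paper's route: Duhamel with sharp heat-kernel bounds on $\T^d$ and a splitting of the time convolution at $t/2$, with the same integrability thresholds ($p<\infty$ for $d=2$, $p<6$ for $d=3$). The only real difference is that you work with $\bar u = u-u_c$ while the paper passes to the vorticity $\omega=\nabla_x\times u$; note that subtracting the spatial average does \emph{not} remove $\nabla_x p$, so your formulation needs the Leray projection (harmless on the torus, and exactly what the vorticity formulation is designed to avoid). For (iii) you take a genuinely different route: a Lagrangian Gr\"onwall estimate along characteristics, with the drag supplying the damping $-R$ and the alignment term bounded below by $-\psi_m M_0 R$, closed by $1+\psi_m M_0>0$. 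The paper instead derives (iii) as a corollary of (i) with $k=0$: since the constant in Proposition \ref{thm_main} is independent of $p$, letting $p\to\infty$ in the moment bound yields the support estimate (Corollary \ref{main_cor}). Your argument is more elementary and decouples (iii) from (i), but it requires the characteristic flow to be well defined, i.e. $\nabla_x u\in L^1_{\rm loc}(\R_+;L^\infty)$, a fluid regularity fact the paper only establishes later (Section 4.2, estimate \eqref{nabla-u-est}); the paper's route needs no flow regularity at all.

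The genuine gap is in (i) for $k=1,2$, which you dispose of with "the cases $k=1,2$ follow identically after accounting for the favorable commutators generated by differentiating the friction term." They do not follow identically, and the commutators are not favorable. Differentiating the kinetic equation produces terms such as $\pa_{x_j}F\cdot\nabla_v f$ and $\pa_{x_i}\pa_{x_j}F\cdot\nabla_v f$, and since $F$ contains the fluid velocity, these carry the coefficients $\|\nabla_x u\|_{L^\infty}$ and $\|\nabla_x^2 u\|_{L^q}$, which are neither exponentially decaying nor even known to be bounded in time. The weighted Gr\"onwall inequality for $k\geq 1$ thus has the schematic form
\[
\frac{d}{dt}A \leq \bke{C + \|\nabla_x u\|_{L^\infty}^2 - cp}A + \mbox{(forcing)},
\]
and closing it requires knowing a priori that $\int_0^t\|\nabla_x u\|_{L^\infty}^2\,d\tau \leq C(1+t)$ and $\int_0^t\|\nabla_x^2 u(\cdot,\tau)\|_{L^q}\,d\tau\leq C(1+t)$: the linear-in-$t$ growth of these integrals is exactly what allows the damping of size $p$ (with $p$ at one's disposal) to absorb them through an integrating factor. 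Establishing these bounds is where most of the paper's work lies: the $L^p$ decay of the drag for all $2\le p\le\infty$ (Lemma \ref{lem_drag}, which itself needs the support bound of Lemma \ref{lem_supp}), Giga--Sohr maximal regularity for the 2D Navier--Stokes equations (Proposition \ref{prop_fw1}), heat maximal regularity for the 3D Stokes system (Proposition \ref{prop_fw1_d3}), and the growth estimates for $\|f\|_{W^{1,q}}$, $\|f\|_{W^{2,q}}$, $\nabla_x\omega$ and $\nabla_x^2 u$ (Lemmas \ref{lem_f}, \ref{lem_f2}, \ref{omega-u-estimate}). Moreover, even the forcing terms in your scheme involve $\|f\|_{W^{k,q}}$, whose at-most-exponential growth (Propositions \ref{prop_fw1}, \ref{prop_fw1_d3}) is itself conditional on the same fluid estimates; and Step A of the paper's proof contains a term like $\inttr|v-v_c|^p|\nabla_x^2f|^{q-1}|\nabla_x^2 u||\nabla_v f|\,dxdv$ requiring an interpolation in $x$ and the bound $\|\nabla_x^2 u(\cdot,t)\|_{L^q}\le Ce^{C(1+t)}$, beaten only because the lower-order weighted quantities (Proposition \ref{thm_decay_w1}) decay at rate $e^{-cpt}$ with $p$ large. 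Without these ingredients your Gr\"onwall closure fails for $k\geq1$.
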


To the best of our knowledge, the previous results \cite{C16, CK15, HMM20} are based on the estimate of modulated kinetic energies \eqref{Lyap} from which we can obtain
\[
\mw_1(f(\cdot,\cdot,t), \rho_f(\cdot,t) \otimes \delta_{u(\cdot,t)}(\cdot)) \to 0
\]
as $t \to \infty$ exponentially fast, i.e., the particle density $f$ converges to the mono-kinetic distributions in the first order Wasserstein metric. In the present work, as presented in Theorem \ref{main-thm}, we employ a new weighted norm by the $p$-th modulated velocity moments and establish the asymptotic behavior of solutions. Moreover, our careful estimates do not depend on the exponent $p$, and this enables us to have that the support of the particle distribution function $f$ in velocity shrinks to the fluid velocity $u$ as time goes to infinity, in fact, $u$ converges to the mean of averaged initial particle and fluid velocities. In particular, this yields
\[
\mw_p(f(\cdot,\cdot,t), \rho_f(\cdot,t) \otimes \delta_{u(\cdot,t)}(\cdot)) \to 0
\]
as $t \to \infty$ exponentially fast for any $p \in [1,\infty]$.  Indeed, if we consider a map $\pi_{v^\infty}: (x,v) \mapsto (x,v^\infty)$, then for any $\varphi \in \mc_b(\T^d \times \R^d)$,
$$\begin{aligned}
\inttr \varphi(x,v) (\pi_{v^\infty} \# f)(dxdv) &= \inttr \varphi (x,v^\infty) f(x,v)\,dxdv \cr
&= \intt \varphi (x,v^\infty) \rho_f(x)\,dx \cr
&=  \inttr \varphi (x,v) \rho_f(x)\,dx \otimes \delta_{v^\infty}(dv),
\end{aligned}$$
where $\cdot \,\# \,\cdot$ denotes the push-forward of a measure, which is defined in Definition \ref{def_push} below.
This together with Theorem \ref{main-thm} (iii) yields that for any $p \in [1,\infty)$
$$\begin{aligned}
&\mw_p (f(\cdot,\cdot,t), \rho_f(\cdot,t) \otimes \delta_{u(\cdot,t)}(\cdot)) \cr
&\quad \leq \mw_p (f(\cdot,\cdot,t), \rho_f(\cdot,t) \otimes \delta_{v^\infty}(\cdot)) + \mw_p (\rho_f(\cdot,t) \otimes \delta_{u(\cdot,t)}(\cdot), \rho_f(\cdot,t) \otimes \delta_{v^\infty}(\cdot)) \cr
&\quad \leq \lt(\inttr |(x,v) - (x,v^\infty)|^p f(x,v,t)\,dxdv \rt)^{1/p} + \|u(\cdot,t) - v^\infty\|_{L^\infty} \leq Ce^{-Ct},
\end{aligned}$$
where $C>0$ is independent of $t$. Since the right hand side of the above inequality does not depend on $p$, we can pass to the limit $p \to \infty$, and this concludes
\[
\mw_\infty(f(\cdot,\cdot,t), \rho_f(\cdot,t) \otimes \delta_{u(\cdot,t)}(\cdot))  \leq Ce^{-Ct},
\]
where $C>0$ is independent of $t$.

\begin{remark} Our strategy requires the a priori assumption on the uniform-in-time bound of the local particle density, however, as mentioned above, this assumption can be removed in \cite{HMM20} when there is no velocity-alignment force $F_a$ in \eqref{main} and the initial Lyapunov function $\ml(f_0, u_0)$ is small enough. Thus we can make our computations fully rigorous by ignoring the velocity alignment force $F_a$ with restriction of smallness of initial data.
\end{remark}

\subsection{Strategies of the proof and organization of the paper} We first recall some known a priori energy estimates and the notion of Wasserstein distance of order $p$ and its properties in Section \ref{sec:pre}.

In Section \ref{sec:thm_lt}, we revisit the large-time behavior estimate of solutions to the system \eqref{main} which proves our first main result Theorem \ref{thm_lt}. As mentioned before, we refine some assumptions that used in previous works under the uniform-in-time bound assumption on the local particle density. We consider the drag force as a relative damping of particle velocity and extract the dissipative effect for the modulated kinetic energy of the particle distribution. On the other hand, for the convergence of fluid velocity, we properly use the diffusion term to control the energy growth. This allows us to take care of the velocity misalignment interactions. Note that the Lyapunov functional $\ml(f,u)$ is constructed based on the modulated kinetic energies, in particular, this result asserts
\[
\int_{\T^d \times \R^d} |v - v_c(t)|^2 f(x,v,t)\,dxdv + \int_{\T^d} | u(x,t) - u_c(t)|^2\,dx \leq Ce^{-Ct} \quad \forall\, t \geq 0.
\]

Section \ref{sec:zero} is devoted to the proof of Theorem \ref{main-thm} for $k=0$. We notice that the exponential decay estimate of the Lyapunov functional $\ml(f,u)$ appeared in \eqref{Lyap} implies the exponential decay of the drag forces in the fluid equations in \eqref{main}. More precisely, in Lemma \ref{lem_drag2}, we show
\[
\lt\|\int_{\R^d} f(\cdot,v,t)(u(\cdot,t)-v)\,dv\rt\|_{L^2} \leq Ce^{-Ct} \quad \forall \, t \geq 0.
\]
We then improve the decay estimate of fluid velocity. In order to have the decay estimate in better function spaces, we rewrite the fluid equations in \eqref{main} as the equations of vorticity $\omega = \nabla_x \times u$:
\[
\partial_t \omega - \Delta_x \omega + (1 - \delta_{d,3})u\cdot \nabla_x \omega=\nabla_x\times \lt(\int_{\R^d} f(u-v)\,dv\rt).
\]
By using the exponential decay of the drag force and heat kernel estimate in $\T^d$, in Lemma \ref{lemma-fluid-decay} we improve the exponential decay estimate of the fluid velocity:
\[
\| u(\cdot,t) - u_c(t)\|_{L^\infty} \leq Ce^{-Ct} \quad \forall \, t \geq 0.
\]
Moreover, as stated in Theorem \ref{main-thm} we have the exponential decay estimates of the vorticity $\omega$ in $L^p(\T^d)$-norm, where $p$ depends on the dimensions. We finally combine these improved decay estimate of fluid velocity and the growth estimate of $L^q$-norm of the particle distribution function $f$ to have the exponential decay estimate: 
\[
\lt(\inttr |v - v_c(t)|^p f^q(x,v,t)\,dxdv\rt)^{1/p} \leq Ce^{-Ct} \quad \forall \,t \geq 1
\]
in Proposition \ref{thm_main}. Here, the proper dissipation rate is obtained from the drag forces. Furthermore, our careful analysis also provides that the constant $C>0$ appeared in the above is independent of both $p$ and $t$. This enables us to consider the limit $p\to+\infty$ and have the estimate of support of $f$ in velocity, see Corollary \ref{main_cor}. 

In Section \ref{sec:high}, we further extend these estimates to the weighted $W^{2,q}$-norm. Here our starting point is again to estimate the drag force in the fluid equations in \eqref{main}. The core idea is the same with the previous estimate. We use the drag and viscous forces to have a proper dissipative effects. Since the decay estimates obtained in Section 4.1 imply that $u - u_c$ and $\int_{\R^d} (v - v_c)f\,dv$ converge to zero exponentially fast in $L^p(\T^d)$ with any $p \in [2,\infty]$, we also have
\[
\lt\|\int_{\R^d} f(\cdot,v,t)(u(\cdot,t)-v)\,dv\rt\|_{L^p} \leq Ce^{-Ct} \quad \forall \,t \geq 1
\]
for $2\le p\leq \infty$, see Lemma \ref{lem_drag} for details. It seems hard to obtain some decay estimates of fluid velocity $u$ with higher order derivatives by using this decay estimate. However, at least, this provides some integrability of $u$, and in particular we have the following growth estimate in Lemma \ref{omega-u-estimate}:
\[
\|\nabla_x \omega(\cdot, t)\|_{L^{\infty}} +\|\nabla_x^2 u(\cdot, t)\|_{L^{p}} \leq Ce^{C(1+t)} \quad \forall \,t \geq 1
\]
for any $p < \infty$. Let us point out that in this step it is important to have the linear growth rate of $t$ in the exponent. Using those observations combined with the growth estimate of $W^{k,q}(\T^d \times \R^d)$-norm of $f$, $k=1,2$, we estimate 
\[
\lt(\inttr \abs{v-v_c(t)}^p \abs{(\nabla_x^{\alpha} \nabla_v^{\beta} f)(x,v,t)}^q\,dxdv\rt)^{1/p} \le Ce^{-Ct}  \quad \forall \,t \geq 1
\]
for $1\le \abs{\alpha}+\abs{\beta}\le k$. This completes the proof of Theorem \ref{main-thm}.

%%%%%%%%%%%%%%%%%%%%%%%%%%%%%%%%%%%%%
%
%
%. \section{Preliminaries}
%
%
%%%%%%%%%%%%%%%%%%%%%%%%%%%%%%%%%%%%%
\section{Preliminaries}\label{sec:pre}
%In this section, we briefly present the well-known a priori estimates for the system \eqref{main}.

%%%%%%%%%%%%%%%%%%%%%%%%%%%%%%%%%%%%%
%
%
% \subsection{A priori energy estimates}
%
%
%%%%%%%%%%%%%%%%%%%%%%%%%%%%%%%%%%%%%

\subsection{A priori energy estimates}

We first provide estimates of conservation laws and total energy dissipation in the lemma below. Since the proof can be found in \cite{B-C-H-K, BCHK14}, we omit it here.
\begin{lemma}\label{lem_energy} Let $(f,u)$ be a solution to the system \eqref{main} with sufficient integrability. Then we have the following estimates:
\begin{itemize}
\item[(i)] The total mass of $f$ is conserved in time:
\[
\inttr f(x,v,t)\,dxdv = \inttr f_0(x,v)\,dxdv = M_0 \quad \forall \,t \geq 0.
\]
\item[(ii)] The total momentum is conserved in time:
\[
\inttr vf(x,v,t)\,dxdv + \intt u(x,t)\,dx = \inttr vf_0(x,v)\,dxdv + \intt u_0(x)\,dx \quad \forall \, t \geq 0.
\]
\item[(iii)] The total energy is not increasing in time:
\begin{align*}
&\frac12\frac{d}{dt} \lt(\inttr |v|^2f\,dxdv + \intt |u|^2\,dx \rt) +  \intt |\nabla_x u|^2\,dx + \inttr |u-v|^2 f\,dxdv\cr
&\qquad \leq - \frac12\inttrd \psi(x-y)|v-w|^2 f(x,v,t)f(y,w,t)\,dxdydvdw.
\end{align*}
\end{itemize}
\end{lemma}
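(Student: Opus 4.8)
The plan is to establish all three statements by standard energy-type testing of the system \eqref{main}, exploiting the periodicity of $\T^d$, the incompressibility constraint $\nabla_x\cdot u = 0$, and the evenness of $\psi$; the ``sufficient integrability'' hypothesis is used precisely to justify the integrations by parts in $v$ and the vanishing of boundary contributions as $|v|\to\infty$. For (i), I would integrate the kinetic equation over $\T^d\times\R^d$. The transport term $v\cdot\nabla_x f$ is a pure $x$-divergence (since $v$ is a free variable) and integrates to zero by periodicity, while $\nabla_v\cdot(F(f,u)f)$ is a $v$-divergence and integrates to zero once the decay of $f$ in $v$ is invoked. This yields $\frac{d}{dt}\inttr f\,dxdv = 0$, which is mass conservation.

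For (ii), I would test the kinetic equation against $v$ and integrate. After discarding the $x$-divergence transport term, integrating the force term by parts in $v$ (using $\nabla_v v = \mathrm{Id}$) gives $\frac{d}{dt}\inttr vf\,dxdv = \inttr F(f,u)f\,dxdv$. The key cancellation is that $\inttr F_a(f)f\,dxdv = 0$: writing out the resulting double integral and symmetrizing under the exchange $(x,v)\leftrightarrow(y,w)$ renders the factor $(w-v)$ antisymmetric while $\psi(x-y)f(x,v,t)f(y,w,t)$ is symmetric (here the evenness $\psi(x)=\psi(-x)$ is essential), so the self-interaction vanishes. Recalling $F(f,u)=F_a(f)+u-v$, there remains $\frac{d}{dt}\inttr vf\,dxdv = \inttr(u-v)f\,dxdv$. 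Separately, integrating the fluid momentum equation over $\T^d$ annihilates the diffusion and pressure terms and, thanks to $\nabla_x\cdot u=0$, the convective term as well, leaving $\frac{d}{dt}\intt u\,dx = -\inttr(u-v)f\,dxdv$. Adding the two identities cancels the drag coupling and yields conservation of total momentum.

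For (iii), I would test the kinetic equation with $\tfrac12|v|^2$ and the fluid equation with $u$, then add the results. On the kinetic side the transport term again drops by periodicity, and integration by parts in $v$ produces $\tfrac12\frac{d}{dt}\inttr|v|^2 f\,dxdv = \inttr v\cdot F(f,u)f\,dxdv$; the alignment contribution $\inttr v\cdot F_a(f)f\,dxdv$ is symmetrized as in (ii), and the algebraic identity $v\cdot(w-v)+w\cdot(v-w)=-|v-w|^2$ converts it exactly into the right-hand side $-\tfrac12\inttrd \psi(x-y)|v-w|^2 f(x,v,t)f(y,w,t)$. On the fluid side, the convective and pressure terms vanish by incompressibility, the viscous term integrates (with $\mu=1$) to the dissipation $\intt|\nabla_x u|^2\,dx$, and the drag term yields $-\inttr u\cdot(u-v)f\,dxdv$. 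Combining, the two drag couplings merge into $\inttr(v-u)\cdot(u-v)f\,dxdv = -\inttr|u-v|^2 f\,dxdv$, which I move to the left-hand side, producing precisely the claimed relation (in fact an equality, hence a fortiori the stated inequality).

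The only genuine care needed --- the main obstacle, such as it is --- lies in the bookkeeping of the nonlocal alignment term: one must perform the symmetrization correctly so that $F_a$ contributes nothing to the momentum balance and exactly the velocity-alignment dissipation $-\tfrac12\inttrd\psi(x-y)|v-w|^2 ff$ to the energy balance, with the evenness of $\psi$ invoked in both places. Everything else is routine once the ``sufficient integrability'' hypothesis legitimizes the integrations by parts and the discarding of boundary terms at $|v|=\infty$.
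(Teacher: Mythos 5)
Your proof is correct and follows exactly the standard energy-method computation that the paper defers to the references \cite{B-C-H-K, BCHK14}: integrating the kinetic equation against $1$, $v$, and $\tfrac12|v|^2$, testing the fluid equation with $u$, and symmetrizing the alignment term under $(x,v)\leftrightarrow(y,w)$ using the evenness of $\psi$. Your observation that (iii) holds with equality for solutions with sufficient integrability (the inequality form accommodating weak solutions) is also consistent with the paper's remarks following the lemma.
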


We notice that the weak solution defined in the sense of Definition \ref{strong-sol} satisfies the energy estimates in Lemma \ref{lem_energy}. More precisely, we refer to  \cite[Lemma 3.1]{HMM20} for the first two conservations and \cite[Theorem 4.2]{B-C-H-K} for the energy dissipation estimates (iii).

\begin{remark} By Lemma \ref{lem_energy} (iii), if the communication weight function $\psi$ is nonnegative, i.e., $\psi_m \geq 0$, then we have the following integrability:
\[
\int_0^\infty  \inttr |u-v|^2 f\,dxdvdt < \infty.
\]
On the other hand, that integrand can be rewritten as
\[
\inttr |u-v|^2 f\,dxdv = \inttr \rho_f | u - u_f|^2\,dxdv + \inttr |u_f - v|^2 f\,dxdv,
\]
where $u_f = u_f(x,t)$ denotes the local particle density defined by 
\[
u_f(x,t) := \frac{\int_{\R^d} vf(x,v,t)\,dv}{\int_{\R^d} f(x,v,t)\,dv},
\]
and this yields
\[
\lim_{t \to \infty} \int_t^{t+1}\inttr \rho_f | u - u_f|^2\,dxdvds = 0 \quad \mbox{and} \quad \lim_{t \to \infty} \int_t^{t+1}\inttr |u_f - v|^2 f\,dxdvds = 0.
\]
\end{remark}

%%%%%%%%%%%%%%%%%%%%%%%%%%%%%%%%%%%%%
%
%
% \subsection{$p$-Wasserstein metric}
%
%
%%%%%%%%%%%%%%%%%%%%%%%%%%%%%%%%%%%%%
\subsection{$p$-Wasserstein distance}\label{sec_wasser}

In this subsection, we present several definition and properties of Wasserstein
distance.

\begin{definition}
Let $\mu$ and $\nu$ be two Borel probability measures on
$\T^d$. Then the Euclidean Wasserstein distance of order $1\leq
p<\infty$ between $\mu$ and $\nu$ is defined as
\[
\mw_p(\mu, \nu) := \inf_{\gamma \in \Gamma(\mu, \nu)} \lt(\int_{\T^d \times \T^d} |x-y|^p \gamma(dxdy)  \rt)^{1/p}
\]
for $p < \infty$ and
\[
\mw_\infty(\mu,\nu) := \inf_{\gamma \in \Gamma(\mu, \nu)} \esssup_{(x,y) \in supp(\gamma)} |x-y|,
\]
where $\Gamma(\mu,\nu)$ is the set of all probability measures on $\T^d \times \T^d$ with first and second marginals $\mu$ and $\nu$, respectively, i.e.,
\[
\int_{\T^d \times \T^d} (\varphi(x) + \psi(y)) \,\gamma(dxdy) = \intt \varphi(x)\,\mu(dx) + \intt \psi(y)\,\nu(dy)
\]
for each $\varphi, \psi \in \mc(\T^d)$.
\end{definition}

Let us denote by $\mathcal{P}_p(\T^d)$ the set of probability measures in $\T^d$ with $p$-th moment bounded. Then $\mathcal{P}_p(\T^d), 1\leq p < \infty$ is a
complete metric space endowed with the $p$-Wassertein distance
$\mw_p$, see \cite{A-G-S,Vil}. In particular for $p=1$, Wasserstein-1 distance $\mw_1$, which is also often called {\it Monge-Kantorovich-Rubinstein distance}, is equivalent to the bounded Lipschitz distance:
\[
d_{BL}(\mu,\nu) = \sup\left\{ \int_{\T^d} \varphi(x)(\mu(dx) - \nu(dx)) : \varphi \in \mbox{Lip}(\T^d), \mbox{ Lip}(\varphi) \leq 1\right\},
\]
where Lip($\T^d$) and Lip($\varphi$) denote the set of Lipschitz functions on $\T^d$ and the Lipschitz constant of a function $\varphi$, respectively.

We next present the definition of the push-forward of a measure by
a mapping which gives some relation between Wasserstein
distances and optimal transportation.

\begin{definition}\label{def_push}
Let $\mu$ be a Borel measure on $\T^d$ and $\mathcal{T} :
\T^d \to \T^d$ be a measurable mapping. Then the push-forward
of $\mu$ by $\mathcal{T}$ is the measure $\nu$ defined by
\[
\nu(\mathcal{B}) = \mu(\mathcal{T}^{-1}(\mathcal{B})) \quad \mbox{for} \quad \mathcal{B}
\subset \T^d,
\]
and denoted as $\nu = \mathcal{T} \# \mu$.
\end{definition}

We then provide some classical properties whose proofs may be found in \cite{Vil}.

\begin{proposition}\label{prop-proper}
\begin{itemize}
\item[(i)] The definition of $\nu = \mathcal{T} \# \mu$ is equivalent
to
$$
\int_{\T^d} \phi(x)\, \nu(dx) = \int_{\T^d} \phi(\mathcal
T(x))\, \mu(dx)
$$
for all $\phi\in \mathcal{C}_b(\T^d)$.

\item[(ii)] Given $\mu_0 \in \mathcal{P}_p(\T^d)$, consider two measurable mappings
$X_1,X_2 : \T^d \to \T^d$, then the following holds:
\[
\mw_p(X_1 \# \mu_0, X_2 \# \mu_0) \leq \lt(\int_{\T^d \times
\T^d} |x-y|^p \,\gamma(dxdy)\rt)^{1/p} = \lt(\int_{\T^d} | X_1(x) - X_2(x)|^p
\,\mu_0(dx)\rt)^{1/p}.
\]
Here we took the transference plan $\gamma = (X_1 \times
X_2)\#\mu_0$.
\end{itemize}
\end{proposition}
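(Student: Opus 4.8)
The proposition collects the change-of-variables formula for push-forwards together with a standard comparison bound for Wasserstein distances, so the plan is to establish part (i) first and then deduce part (ii) from it by two applications to carefully chosen test functions.

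For part (i) I would prove both implications. For the forward direction, assuming $\nu = \mathcal{T}\#\mu$ in the sense of Definition \ref{def_push}, the identity $\int \phi\,d\nu = \int \phi\circ\mathcal{T}\,d\mu$ is first checked for indicator functions $\phi = \charf_{\mathcal B}$, where it reduces to the pointwise identity $\charf_{\mathcal B}\circ \mathcal{T} = \charf_{\mathcal{T}^{-1}(\mathcal B)}$ together with the defining relation $\nu(\mathcal B) = \mu(\mathcal{T}^{-1}(\mathcal B))$. By linearity it extends to simple functions, by monotone convergence to nonnegative measurable functions, and by splitting into positive and negative parts to all bounded measurable $\phi$, in particular to $\phi \in \mc_b(\T^d)$. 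For the converse, assuming the integral identity for every $\phi \in \mc_b(\T^d)$, I would recover the set-theoretic definition by approximating indicators of closed sets $F$ from above by the continuous functions $\phi_n(x) = \max(0, 1 - n\,\mathrm{dist}(x,F))$; since $\phi_n \downarrow \charf_F$ pointwise and therefore $\phi_n\circ\mathcal{T} \downarrow \charf_{\mathcal{T}^{-1}(F)}$, dominated convergence yields $\nu(F) = \mu(\mathcal{T}^{-1}(F))$ for all closed $F$, and hence for all Borel sets since the closed sets form a generating $\pi$-system and both $\nu$ and $\mathcal T \# \mu$ are finite Borel measures on the compact metric space $\T^d$.

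For part (ii), the central observation is that $\gamma := (X_1\times X_2)\#\mu_0$ is an admissible transference plan between $X_1\#\mu_0$ and $X_2\#\mu_0$. To verify the marginal conditions I would apply part (i) to the map $X_1 \times X_2 : \T^d \to \T^d\times\T^d$ and the test function $(x,y)\mapsto \varphi(x) + \psi(y)$ with $\varphi,\psi\in\mc(\T^d)$, obtaining
\[
\int_{\T^d\times\T^d}(\varphi(x)+\psi(y))\,\gamma(dxdy) = \int_{\T^d}\varphi(x)\,(X_1\#\mu_0)(dx) + \int_{\T^d}\psi(y)\,(X_2\#\mu_0)(dy),
\]
which is precisely the defining property of membership in $\Gamma(X_1\#\mu_0, X_2\#\mu_0)$. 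Since $\mw_p$ is defined as an infimum over all such plans, it is bounded above by the cost of this particular $\gamma$, which gives the stated inequality. The final equality then follows from one more application of part (i), now to $X_1\times X_2$ and the bounded continuous cost function $(x,y)\mapsto |x-y|^p$ (bounded because $\T^d$ has finite diameter), which yields $\int |x-y|^p\,\gamma(dxdy) = \int |X_1(z)-X_2(z)|^p\,\mu_0(dz)$.

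The argument is essentially routine once part (i) is available; the only genuinely delicate point is the converse implication in (i), where passing from continuous test functions back to the set-level definition uses the regularity of finite Borel measures on $\T^d$ and the approximation of indicators of closed sets by continuous functions, rather than a direct identity.
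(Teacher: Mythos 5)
Your proof is correct: the paper gives no argument for this proposition at all, simply citing Villani \cite{Vil}, and your argument is precisely the standard one found there --- change of variables via indicator/simple-function approximation and monotone convergence for the forward direction of (i), recovery of the set-level definition by approximating indicators of closed sets with the continuous functions $\max(0,1-n\,\mathrm{dist}(x,F))$ plus a $\pi$-system argument for the converse, and the comparison bound in (ii) via the admissible plan $(X_1\times X_2)\#\mu_0$. The only cosmetic remark is that in part (ii) you apply part (i) to a map into the product $\T^d\times\T^d$ rather than $\T^d$ as literally stated, but the proof of (i) is insensitive to the target space, so this extension is immediate.
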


%%%%%%%%%%%%%%%%%%%%%%%%%%%%%%%%%%%%%
%
%
% \subsection{A revisit to the estimate of large behavior for the system \eqref{main}}
%
%
%%%%%%%%%%%%%%%%%%%%%%%%%%%%%%%%%%%%%

\section{Proof of Theorem \ref{thm_lt}: A revisit to the estimate of asymptotic behavior}\label{sec:thm_lt}

In this section, we provide the details of the proof of Theorem \ref{thm_lt}.

Direct computations yield
\begin{align}\label{lt_est}
\begin{aligned}
&\frac12\frac{d}{dt}\lt(\inttr |v - v_c(t)|^2 f\,dxdv + \intt |u - u_c(t)|^2\,dx + \frac1{1 + M_0}|u_c(t) - v_c(t)|^2\rt)\cr
&\quad = - \inttr |u-v|^2 f\,dxdv  - \intt |\nabla_x u|^2\,dx\cr
&\qquad - \frac12\inttrd \psi(x-y)|v-w|^2 f(x,v,t)f(y,w,t)\,dxdydvdw\cr
&\quad =: - \mathcal{D}(f(t),u(t)).
\end{aligned}
\end{align}
On the other hand, due to Young's inequality, it follows that
\begin{align*}
&\inttr |u-v|^2 f\,dxdv\cr
&\quad = \inttr |u-u_c + u_c - v_c + v_c - v|^2 f\,dxdv\cr
&\quad\geq \lt(1 - \frac2\e\rt)\intt |u - u_c|^2 \rho_f\,dx + (1 - \e)M_0 |u_c - v_c|^2 + (1 - \e)\inttr |v - v_c|^2 f\,dxdv,
\end{align*}
where $\e > 0$ will be determined later. Together with the following estimate
\[
\frac12\inttrd \psi(x-y)|v-w|^2 f(x,v,t)f(y,w,t)\,dxdydvdw \geq \psi_m M_0\inttr |v - v_c|^2 f\,dxdv,
\]
we obtain
\begin{align}\label{est_diss}
\begin{aligned}
&\inttr |u-v|^2 f\,dxdv +  \frac12\inttrd \psi(x-y)|v-w|^2 f(x,v,t)f(y,w,t)\,dxdydvdw\cr
&\quad \geq  (1 - \e)M_0|u_c - v_c|^2 + (1 + \psi_mM_0 - \e)\inttr |v - v_c|^2 f\,dxdv  + \lt(1 - \frac2\e\rt)\intt |u - u_c|^2 \rho_f\,dx.
\end{aligned}
\end{align}
We then choose $\e>0$ and $M_0>0$ small enough so that $1 + \psi_mM_0 - \e >0$. Note that, via Sobolev embedding, we have
\begin{equation}\label{april02-20}
\intt |u - u_c|^2 \rho_f\,dx \leq  C\|\rho_f\|_{L^\delta}\|\nabla_x u\|_{L^2}^2.
\end{equation}
Indeed, for $\delta\in (1, \infty]$ by H\"older's inequality we find
\[
\int_{\T^2} |u - u_c|^2 \rho_f\,dx \leq \|\rho_f\|_{L^\delta}\lt(\int_{\T^2} |u - u_c|^{2\delta'} \,dx\rt)^{1/\delta'} = \|\rho_f\|_{L^\delta}\|u - u_c\|_{L^{2\delta'}}^2,
\]
where $\delta'$ is the H\"older conjugate of $\delta$. We now choose $p = 2\delta'/(1 + \delta')$, then we can easily check $p < 2$ and applying Gagliardo--Nirenberg--Sobolev inequality gives
\[
\|u - u_c\|_{L^{2\delta'}}^2 \leq \|u - u_c\|_{W^{1,p}} \leq C\|u - u_c\|_{W^{1,2}} \leq C\|\nabla_x u\|_{L^2} 
\]
due to the boundedness of our spatial domain and Poincar\'e inequality. This asserts
\[
\int_{\T^2} |u - u_c|^2 \rho_f\,dx \leq C\|\rho_f\|_{L^\delta}\|\nabla_x u\|_{L^2}^2
\]
for $\delta\in (1, \infty]$. On the other hand, if $d>2$, we have
\[
\intt |u - u_c|^2 \rho_f\,dx \leq \|\rho_f\|_{L^{d/2}} \|u- u_c\|_{L^{2d/(d-2)}}^2 \leq C\|\rho_f\|_{L^{d/2}}\|\nabla_x u\|_{L^2}^2 \leq C\|\rho_f\|_{L^\delta}\|\nabla_x u\|_{L^2}^2
\]
for $\delta \in [d/2,\infty]$. Since $(1-2/\e) < 0$, combining \eqref{est_diss} and \eqref{april02-20} implies
\begin{align}\label{est_diss2}
\begin{aligned}
&\inttr |u-v|^2 f\,dxdv +  \frac12\inttrd \psi(x-y)|v-w|^2 f(x,v,t)f(y,w,t)\,dxdydvdw\cr
&\quad \geq  (1 - \e)M_0|u_c - v_c|^2 + (1 + \psi_m M_0 - \e)\inttr |v - v_c|^2 f\,dxdv  + C\|\rho_f\|_{L^\delta}\lt(1 - \frac2\e\rt)\intt |\nabla_x u|^2\,dx.
\end{aligned}
\end{align}
If $\psi_m \geq 0$, due to \eqref{april02-20}, we get
\begin{align*}%\label{april02-30}
\begin{aligned}
\mathcal{L}(f(t),u(t)) &\leq c_0 (1 - \e)M_0 |u_c - v_c|^2 +c_0 (1 + \psi_mM_0 - \e)\inttr |v - v_c|^2 f\,dxdv + \intt |u - u_c|^2 \,dx \cr
&\leq C\intt |\nabla_x u|^2\,dx+c_0 \lt(\lt(\frac2\e - 1\rt)\intt |u - u_c|^2 \rho_f\,dx + \inttr |u-v|^2 f\,dxdv\rt) \cr
&\quad +  \frac{c_0}2\inttrd \psi(x-y)|v-w|^2 f(x,v,t)f(y,w,t)\,dxdydvdw \cr
&\le C\lt(\inttr |u-v|^2 f\,dxdv + \intt |\nabla_x u|^2\,dx\rt) \cr
&\quad + \frac C2\inttrd \psi(x-y)|v-w|^2 f(x,v,t)f(y,w,t)\,dxdydvdw,
\end{aligned}
\end{align*}
where $c_0 > 0$ is given by
\[
c_0 = \max\lt\{\frac{1}{4M_0(1 - \e)}, \frac{1}{2(1 + \psi_m M_0 - \e)} \rt\}.
\]
This implies that there exists $C>0$ independent of $t$ such that 
\[
\mathcal{D}(f(t),u(t)) \geq C\mathcal{L}(f(t),u(t)),
\]
Combining this and \eqref{lt_est} asserts
\[
\frac{d}{dt} \mathcal{L}(f(t),u(t)) + C^{-1} \mathcal{L}(f(t),u(t)) \leq 0,
\]
and thus, we deduce the exponential decay of the functional $\mathcal{L}$.

On the other hand, if $\psi_m < 0$, then we find from \eqref{est_diss2} with $\delta=d$ that
$$\begin{aligned}
\mathcal{D}(f(t),u(t))& \geq  (1 - \e)M_0 |u_c - v_c|^2 + (1 + \psi_m M_0- \e)\inttr |v - v_c|^2 f\,dxdv\cr
&\quad  + \lt(1 - C\|\rho_f\|_{L^d}\lt(\frac2\e-1\rt)\rt)\intt |\nabla_x u|^2\,dx.
\end{aligned}$$
Note that the interpolation inequality gives
\[
\|\rho_f(\cdot,t)\|_{L^d} \leq \|\rho_f(\cdot,t)\|_{L^\infty}^{(d-1)/d} \|\rho_f(\cdot,t)\|_{L^1}^{1/d} \leq \|\rho_f\|_{L^\infty(\T^d \times \R_+)}^{(d-1)/d} M_0^{1/d},
\]
and thus by choosing the initial mass $M_0 > 0$ small enough to get
\[
C\|\rho_f\|_{L^d}\lt(\frac2\e-1\rt) < C\|\rho_f\|_{L^\infty(\T^d \times \R_+)}^{(d-1)/d} M_0^{1/d}\lt(\frac2\e-1\rt) < 1.
\]
This together with applying the Poincar\'e inequality yields
$$\begin{aligned}
\mathcal{D}(f(t),u(t))&\geq  (1 - \e)M_0|u_c - v_c|^2 + (1 + \psi_m M_0- \e)\inttr |v - v_c|^2 f\,dxdv + c_1\intt |u - u_c|^2\,dx
\end{aligned}$$
for some $c_1 > 0$. This implies that there exists $C>0$ independent of $t$ such that 
\[
\mathcal{D}(f(t),u(t)) \geq C\mathcal{L}(f(t),u(t)).
\]
Hence we have from \eqref{lt_est} the desired exponential decay of $\mathcal{L}(f(t),u(t))$.

%{\color{red}
%\begin{remark}\label{rmk_new} When $\psi_m > -1$, we can easily find from \eqref{as_mu} that the condition on the viscosity coefficient $\mu > 0$ can be replaced by the smallness assumption on $\|\rho_f\|_{L^\infty(\R_+;L^\delta(\T^d))}$.
%\end{remark}
%}

%%%%%%%%%%%%%%%%%%%%%%%%%%%%%%%%%%%%%
%
%
% \section{Theorem}
%
%
%%%%%%%%%%%%%%%%%%%%%%%%%%%%%%%%%%%%%

\section{Proof of Theorem \ref{main-thm}:  Exponential decays estimates}

%{\color{red} In this section, we provide the details of the proof for Theorem \ref{main-thm}. In the rest of this paper we assume that the viscosity coefficient $\mu>0$ is chosen such that \eqref{new_as} holds. The constant $\mu$ may be greater than $1$, however, we set $\mu=1$ for simplicity of the presentation.} 

\subsection{Exponential decays of weighted $L^q$ estimates}\label{sec:zero}
We first begin with the estimate of the growth rate of $L^p$-norm of $f$.

\begin{lemma}[$L^{q}$-estimate of $f$]\label{f-growth} Let $1\le q \le \infty$ and $(f,u)$ be the strong solution  defined in Definition \ref{strong-sol} to the system \eqref{main}-\eqref{initial}.  Under the same assumptions as in Theorem  \ref{main-thm}, we have
\begin{equation}\label{april03-10}
\bke{\inttr f^{q}(x,v,t)\,dxdv}^{\frac{1}{q}}\leq \lt(\inttr f_0^{q}\,dxdv\rt) ^{\frac{1}{q}}e^{d\bke{\frac{q-1}{q}}(\psi_M+1)t} \quad \forall \,t \geq 0
\end{equation}
for $1\le q<\infty$ and 
\begin{equation}\label{april03-20}
\norm{f}_{L^{\infty}(\T^d \times \R^d)}\leq\norm{f_0}_{L^{\infty}(\T^d \times \R^d)}e^{d(\psi_M+1)t} \quad \forall \,t \geq 0.
\end{equation}
\end{lemma}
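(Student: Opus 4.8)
The plan is to turn the conservative kinetic equation into a differential inequality for $\inttr f^q\,dxdv$, using the precise velocity-divergence of the total force. The first step is the algebraic identity
\[
\nabla_v \cdot F(f,u) = \nabla_v \cdot F_a(f) + \nabla_v \cdot (u-v).
\]
Since $u = u(x,t)$ is independent of $v$ and $\nabla_v \cdot v = d$, the drag part contributes $-d$; differentiating $F_a(f)(x,v,t) = \inttr \psi(x-y)(w-v) f(y,w,t)\,dydw$ in $v$ gives $\nabla_v \cdot F_a(f) = -d\,\Psi(x,t)$, where $\Psi(x,t) := \intt \psi(x-y)\rho_f(y,t)\,dy$. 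Hence $\nabla_v \cdot F(f,u) = -d(\Psi(x,t)+1)$. Because $\rho_f \geq 0$ and $\intt \rho_f\,dy = M_0 \leq 1$, and (after enlarging it if necessary, without loss of generality) $\psi_M \geq 0$, one gets the pointwise bound $\Psi(x,t) \leq \psi_M M_0 \leq \psi_M$.

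For $1 \leq q < \infty$ I would multiply the kinetic equation by $q f^{q-1}$ and integrate over $\T^d \times \R^d$. The transport term is in divergence form in $x$, namely $q f^{q-1} v\cdot \nabla_x f = \nabla_x \cdot (v f^q)$, and integrates to zero by periodicity. For the velocity term I write $\nabla_v \cdot (Ff) = F\cdot \nabla_v f + f\,\nabla_v \cdot F$ and integrate by parts in $v$ (the boundary terms vanishing since $f$ has compact velocity support under the hypotheses of Theorem \ref{main-thm}), which yields
\[
\frac{d}{dt}\inttr f^q\,dxdv = -(q-1)\inttr (\nabla_v \cdot F) f^q\,dxdv = (q-1)d\inttr (\Psi+1) f^q\,dxdv.
\]
Using the bound on $\Psi$ together with $f^q \geq 0$ gives $\frac{d}{dt}\inttr f^q\,dxdv \leq (q-1)d(\psi_M+1)\inttr f^q\,dxdv$, and Gr\"onwall's inequality followed by taking the $q$-th root produces exactly the stated exponent $d\big(\tfrac{q-1}{q}\big)(\psi_M+1)$.

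For the $L^\infty$ estimate I would integrate along the characteristic flow $\dot X = V$, $\dot V = F(f,u)(X,V,t)$. In nonconservative form the equation reads $\partial_t f + v\cdot\nabla_x f + F\cdot\nabla_v f = -f\,\nabla_v \cdot F = d(\Psi+1)f$, so $\frac{d}{dt} f(X(t),V(t),t) = d(\Psi(X(t),t)+1) f \leq d(\psi_M+1) f$; Gr\"onwall along each characteristic and taking the supremum over initial points gives $\|f(\cdot,\cdot,t)\|_{L^\infty} \leq \|f_0\|_{L^\infty} e^{d(\psi_M+1)t}$ (equivalently, one may pass to the limit $q \to \infty$ in the $L^q$ bound, since $\tfrac{q-1}{q}\to 1$). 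The only genuine obstacle is making these formal manipulations rigorous for the strong solutions of Definition \ref{strong-sol}: the integration by parts in $v$ and the renormalization require sufficient decay of $f$ and $Ff$ as $|v|\to\infty$, which is supplied by the compact-velocity-support hypothesis on $f_0$ and the regularity in the existence theory, and the characteristic flow must be confirmed to be a (measure-preserving) bijection of phase space. These points are technical but standard.
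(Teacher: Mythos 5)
Your proof is correct and follows essentially the same route as the paper: the identity $\nabla_v \cdot F(f,u) = -d(1+\psi\star\rho_f)$, integration by parts against $f^{q-1}$, and Gr\"onwall's lemma, producing exactly the exponent $d\bke{\frac{q-1}{q}}(\psi_M+1)$. For the $L^\infty$ bound the paper simply passes $q\to\infty$ in the $L^q$ estimate---one of the two equivalent options you mention---so your characteristics argument is an acceptable (slightly more laborious) variant, and your explicit remark that $\psi\star\rho_f \le \psi_M M_0 \le \psi_M$ uses $M_0\le 1$ and $\psi_M\ge 0$ is a welcome clarification of a step the paper leaves implicit.
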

\begin{proof}
It follows from the kinetic part in \eqref{main} that
\begin{align*}
&\frac{1}{q}\frac{d}{dt}\inttr f^{q}\,dxdv =-\inttr \nabla_v\cdot (F(f,u)f)
f^{q-1}\,dxdv\cr
=& \frac{q-1}{q}\inttr F(f,u) \cdot \nabla_v(f^{q})\,dxdv
= \frac{d(q-1)}{q}\inttr (1 + \psi \star \rho_f ) f^{q}\,dxdv,
\end{align*}
\begin{equation*}
\end{equation*}
where we used
\[
\nabla_v \cdot F(f,u) = -d - d\inttr \psi(x-y)f(y,w)\,dydw = -d(1 + \psi \star \rho_f).
\]
Thus, due to \eqref{april02-10}, we obtain
\begin{equation}\label{f}
\frac{d}{dt}\inttr f^{q}\,dxdv \leq d(q-1)(1 + \psi_M) \inttr f^{q}\,dxdv.
\end{equation}
Applying Gr\"onwall's lemma, we deduce \eqref{april03-10}, and \eqref{april03-20} simply follows by passing $q$ to the limit in \eqref{april03-10}.
\end{proof}

We next present the exponential decay estimate of $L^2$-norm of the drag force in the fluid equation in \eqref{main}.

\begin{lemma}\label{lem_drag2}
Suppose that $(f,u)$ is the strong solution  defined in Definition \ref{strong-sol} to the system \eqref{main}-\eqref{initial}.
Under the same assumptions as in Theorem  \ref{main-thm}, there exists a constant $C>0$, independent of $t$, such that
\[
\lt\|\int_{\R^d} f(u-v)\,dv\rt\|_{L^2} \leq Ce^{-Ct} \quad \forall \, t \geq 0.
\]
\end{lemma}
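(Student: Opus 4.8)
The plan is to decompose the relative velocity $u-v$ around the averaged quantities $u_c$ and $v_c$ and then convert the exponential decay of the Lyapunov functional from Theorem \ref{thm_lt} into an $L^2(\T^d)$ bound on the macroscopic drag, using the standing hypothesis that $\|\rho_f\|_{L^\infty}$ is finite. Since $u=u(x,t)$ does not depend on $v$, I would first write, adding and subtracting $u_c(t)$ and $v_c(t)$,
\[
\int_{\R^d} f(u-v)\,dv = \rho_f\,(u-u_c) + \rho_f\,(u_c-v_c) - \int_{\R^d} (v-v_c)f\,dv,
\]
where I used $\int_{\R^d} f\,dv = \rho_f$. It then suffices to bound each of the three terms in $L^2(\T^d)$ by $Ce^{-Ct}$.

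For the first term I would estimate $\|\rho_f\,(u-u_c)\|_{L^2} \le \|\rho_f\|_{L^\infty}\|u-u_c\|_{L^2}$; the factor $\|u-u_c\|_{L^2}$ decays like $e^{-Ct}$ since $\intt |u-u_c|^2\,dx \le 2\mathcal{L}(f(t),u(t)) \le Ce^{-Ct}$ by Theorem \ref{thm_lt}, while $\|\rho_f\|_{L^\infty}$ is finite by assumption. For the second term, $\|\rho_f\,(u_c-v_c)\|_{L^2} = |u_c-v_c|\,\|\rho_f\|_{L^2}$; here $|u_c-v_c| \le Ce^{-Ct}$ by Remark \ref{rmk_limit}, and $\|\rho_f\|_{L^2} \le \|\rho_f\|_{L^\infty}^{1/2} M_0^{1/2}$ is bounded by interpolating between $L^1$ and $L^\infty$ (recall $\|\rho_f\|_{L^1} = M_0$). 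For the third, genuinely kinetic, term I would apply the Cauchy--Schwarz inequality in $v$,
\[
\lt|\int_{\R^d} (v-v_c)f\,dv\rt|^2 \le \rho_f \int_{\R^d} |v-v_c|^2 f\,dv,
\]
integrate in $x$, and use the uniform bound on $\rho_f$ together with the decay of the first modulated kinetic energy from Theorem \ref{thm_lt} to obtain
\[
\lt\|\int_{\R^d} (v-v_c)f\,dv\rt\|_{L^2}^2 \le \|\rho_f\|_{L^\infty}\inttr |v-v_c|^2 f\,dxdv \le Ce^{-Ct}.
\]
Collecting the three estimates and absorbing the halved exponential rates into the generic constant $C$ yields the claim.

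I do not expect a substantial analytic obstacle here: the lemma is essentially a repackaging of the Lyapunov decay of Theorem \ref{thm_lt} as an $L^2$ estimate of the drag. The only point requiring a little care is the third term, where one must pass from the \emph{second} velocity moment $\int_{\R^d} |v-v_c|^2 f\,dv$ (which the Lyapunov functional controls) to the \emph{first} moment $\int_{\R^d}(v-v_c)f\,dv$ entering the drag; the Cauchy--Schwarz step supplies exactly this, at the cost of a factor $\rho_f^{1/2}$ that is absorbed by the uniform $L^\infty$ bound. Indeed, the uniform-in-time boundedness of $\|\rho_f\|_{L^\infty}$ is precisely what makes all three estimates close, and it is inherited directly from the hypotheses of Theorem \ref{thm_lt}.
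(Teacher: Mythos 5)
Your proof is correct and follows essentially the same route as the paper: the identical decomposition $\int_{\R^d} f(u-v)\,dv = \rho_f(u-u_c) + \rho_f(u_c-v_c) + \int_{\R^d} f(v_c-v)\,dv$, the Cauchy--Schwarz step in $v$ for the kinetic term, and the decay supplied by Theorem \ref{thm_lt} under the uniform $L^\infty$ bound on $\rho_f$. The only (immaterial) difference is that you bound $\|\rho_f\|_{L^2}$ by interpolation between $L^1$ and $L^\infty$, whereas the paper simply uses $\|\rho_f\|_{L^\infty}$ times the volume factor $(2\pi)^{d/2}$.
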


\begin{proof} A straightforward computation gives
\[
\int_{\R^d} f(u-v)\,dv = \rho_f(u-u_c) + \rho_f(u_c - v_c) + \int_{\R^d} f(v_c - v)\,dv,
\]
and taking $L^2$-norm to the above yields
\begin{align*}
\lt\|\int_{\R^d} f(u-v)\,dv\rt\|_{L^2} &\leq \|\rho_f\|_{L^\infty}(\|u - u_c\|_{L^2} + (2\pi)^{\frac{d}{2}}|u_c - v_c| ) + \lt\| \int_{\R^d} f(v_c - v)\,dv \rt\|_{L^2} \cr
&\leq Ce^{-Ct} + \lt\| \int_{\R^d} f(v_c - v)\,dv \rt\|_{L^2},
\end{align*}
where we used the result of Theorem \ref{thm_lt}. For the estimate of the second term on the right hand side of the above inequality, we use the fact
\[
\int_{\T^d}\lt|\int_{\R^d} f(v_c - v)\,dv \rt|^2 dx \leq \int_{\T^d}\rho_f \lt(\int_{\R^d} f|v_c - v|^2\,dv \rt) dx \leq \|\rho_f\|_{L^\infty} \inttr f|v_c - v|^2\,dxdv
\]
together with Theorem \ref{thm_lt} to have
\[
\lt\| \int_{\R^d} f(v_c - v)\,dv \rt\|_{L^2} \leq C\|\rho_f\|_{L^\infty}^{1/2}\lt(\inttr f|v_c - v|^2\,dxdv\rt)^{1/2} \leq Ce^{-Ct}
\]
for some cosntatnt $C>0$, independent of $t$. This completes the proof.
\end{proof}

In order to obtain the time-asymptotic behavior of solutions, we need to estimate the heat kernel on the periodic domain $\T^d, d=2,3$. For an integrable function $u$ on $\T^d$, we define its multiple Fourier series expansion as
\[
u(x) = \sum_{\xi \in \Z^d} c_\mf(u)(\xi) e^{i \xi \cdot x},
\]
where $c_\mf(u)$ is the Fourier coefficient given by
\[
c_\mf(u)(\xi) := \frac{1}{(2\pi)^d} \intt u(x) e^{-i \xi \cdot x}\,dx.
\]
Then the heat semigroup generated by $\Delta_x$, which is defined by
\bq\label{sol_homo}
e^{t\Delta_x}u_0(x) = \sum_{\xi \in \Z^d} e^{-t|\xi|^2 + i \xi \cdot x}c_\mf(u_0)(\xi)
\eq
is the solution of the heat equation
\[
\pa_t u = \Delta_x u, \quad (x,t) \in \T^d \times \R_+
\]
with the initial data $\displaystyle u(x,t)|_{t=0} = u_0(x)$, $x \in \T^d$.
%\[
%u(x,t)|_{t=0} = u_0(x), \quad x \in \T^d.
%\]
Note that the heat semigroup can be rewritten as the convolution formula:
\[
e^{t\Delta_x}u_0(x) = \intt \Gamma(x-y,t) u_0(y)\,dy.
\]
Here the heat kernel $\Gamma(x,t)$ on $\T^d$ is given by
\bq\label{heat_k}
\Gamma(x,t) = \frac{1}{(2\pi)^d} \sum_{\xi \in \Z^d} e^{-t|\xi|^2 + i \xi \cdot x} = \frac{1}{(4\pi t)^{d/2}} \sum_{\xi \in \Z^d} e^{-\frac{|x- 2\pi \xi|^2}{4t}}.
\eq
In the following lemma, we provide some integrability estimates of the heat kernel $\Gamma$.
Although those results are probably standard,  details will be shown in Appendix \ref{Lemma33} since we cannot specify appropriate references.

\begin{lemma}\label{lem_hk} Let  $ \Gamma$ be the heat kernel defined in \eqref{heat_k}.  Then we have
\[
\lt\|\Gamma(\cdot,t) - \frac{1}{(2\pi)^d}\rt\|_{L^p} \leq  C \bke{t^{-1}+t^{-\frac{d}{2}}}^{1-\frac{1}{p}}e^{-t(1-\frac{1}{p})}
\]
%and
\[
\|\nabla_x \Gamma(\cdot,t)\|_{L^p} \leq  C\lt(t^{-\lt(1 - \frac{1}{2p}\rt)} + t^{-\frac d2\lt( 1 - \frac1p\rt) - \frac12} \rt)e^{-t\lt(1 - \frac1p\rt)}
\]
for $d=2,3$ and $1 \leq p \leq \infty$. Here $C>0$ is independent of $t$ and uniformly bounded in $p$.
\end{lemma}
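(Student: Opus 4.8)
The plan is to separate out the mean and estimate $g(\cdot,t) := \Gamma(\cdot,t) - (2\pi)^{-d}$ (and $\nabla_x\Gamma$, whose mean already vanishes) by splitting into a short-time regime $0 < t \le 1$ and a long-time regime $t \ge 1$, using the Gaussian physical-space representation in the former and the Fourier representation in the latter, and finally interpolating between $L^1$ and $L^\infty$ to recover the $1 - 1/p$ exponents. The decisive observation for the mean is that the constant $(2\pi)^{-d}$ is precisely the $\xi = 0$ Fourier mode of $\Gamma$, so that
\[
g(x,t) = \frac{1}{(2\pi)^d}\sum_{\xi \in \Z^d \setminus \{0\}} e^{-t|\xi|^2 + i\xi\cdot x}, \qquad \nabla_x\Gamma(x,t) = \frac{1}{(2\pi)^d}\sum_{\xi \in \Z^d \setminus \{0\}} (i\xi)\,e^{-t|\xi|^2 + i\xi\cdot x},
\]
and in particular both functions have zero mean on $\T^d$.

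For $0 < t \le 1$ I would use the second representation in \eqref{heat_k}, namely $\Gamma(x,t) = (4\pi t)^{-d/2}\sum_{k \in \Z^d} e^{-|x - 2\pi k|^2/4t}$. On a fundamental cell the $k = 0$ term is the free heat kernel $G(\cdot,t)$, while the lattice tail $k \ne 0$ is bounded by $Ce^{-c/t}$ and is harmless. An explicit Gaussian integration together with the scaling $x = \sqrt{t}\,y$ gives $\norm{G(\cdot,t)}_{L^p(\R^d)} = C_p\, t^{-\frac d2(1 - 1/p)}$ and $\norm{\nabla G(\cdot,t)}_{L^p(\R^d)} = C_p\, t^{-\frac d2(1 - 1/p) - \frac12}$, which produce the short-time polynomial factors $t^{-d/2}$ and $t^{-d/2 - 1/2}$ of the statement. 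Since $e^{-t(1 - 1/p)} \ge e^{-1}$ on $[0,1]$, the exponential factor can be inserted for free, and subtracting the mean only adds a bounded $L^p$-contribution, dominated by $t^{-\frac d2(1-1/p)}$ for $t \le 1$.

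For $t \ge 1$ I would work with the Fourier series and exploit the spectral gap $|\xi| \ge 1$ for $\xi \ne 0$ by writing $e^{-t|\xi|^2} = e^{-t}e^{-t(|\xi|^2 - 1)}$; since $|\xi|^2 - 1 \ge 0$ and $t \ge 1$, the residual sums $\sum_{\xi \ne 0} e^{-(|\xi|^2 - 1)}$ and $\sum_{\xi \ne 0}|\xi|\,e^{-(|\xi|^2 - 1)}$ converge, yielding $\norm{g(\cdot,t)}_{L^\infty}, \norm{\nabla_x\Gamma(\cdot,t)}_{L^\infty} \le Ce^{-t}$. Combining this with the elementary bounds $\norm{g(\cdot,t)}_{L^1} \le 2$ (both $\Gamma$ and $(2\pi)^{-d}$ integrate to one over $\T^d$) and $\norm{\nabla_x\Gamma(\cdot,t)}_{L^1} \le C$, the interpolation inequality $\norm{h}_{L^p} \le \norm{h}_{L^1}^{1/p}\norm{h}_{L^\infty}^{1 - 1/p}$, whose constant is exactly $1$, furnishes the $e^{-t(1 - 1/p)}$ decay. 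Gluing the two regimes then produces the stated form of both bounds.

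The hard part will be the uniformity of $C$ in $p$ up to $p = \infty$. This is where care is required: the constants $C_p$ from the short-time Gaussian computation are $p$-th roots of moments of the form $\bke{\int_{\R^d}|y|^{mp} e^{-p|y|^2/4}\,dy}^{1/p}$, and one must check by a Laplace/Stirling estimate that these remain bounded as $p \to \infty$, the exponent being sharply concentrated so that the $p$-th root converges. The remaining ingredients — the negligibility of the periodization tail for small $t$, the convergence of the residual lattice sums, and the fact that the interpolation constant equals $1$ — are routine, so I expect the uniform-in-$p$ short-time Gaussian estimate to be the only genuinely delicate step.
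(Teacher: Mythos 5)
Your two-regime strategy is sound as far as it goes, and the point you single out as delicate --- uniformity in $p$ of the short-time Gaussian constants --- is resolved exactly as you propose: after substituting $x=\sqrt{4t/p}\,y$, the factor $(4t/p)^{1/2}$ coming from the substitution contributes $p^{-1/2}$, which cancels the $\sqrt{p}$ growth of $\bigl(\int_{\R^d}|y|^{p}e^{-|y|^2}\,dy\bigr)^{1/p}$ given by Stirling, so the constants stay bounded as $p\to\infty$. The gap is in your final sentence: gluing the two regimes does \emph{not} produce the stated form. For $t\ge 1$ your argument yields $\|\Gamma(\cdot,t)-(2\pi)^{-d}\|_{L^p}\le Ce^{-t(1-1/p)}$ (and likewise for $\nabla_x\Gamma$), whereas the statement carries the extra prefactor $(t^{-1}+t^{-d/2})^{1-1/p}\sim t^{-(1-1/p)}$, which tends to zero as $t\to\infty$; no $t$-independent constant converts your bound into that one, so as written you have proved a strictly weaker inequality for large times.

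That said, this is not a gap you could have closed, because the printed inequality is itself false for large $t$. Since every term of the Fourier sum is positive at $x=0$,
\[
\Gamma(0,t)-\frac{1}{(2\pi)^d}=\frac{1}{(2\pi)^d}\sum_{\xi\in\Z^d\setminus\{0\}}e^{-t|\xi|^2}\ \ge\ \frac{2d}{(2\pi)^d}\,e^{-t},
\]
the contribution of the $2d$ modes with $|\xi|=1$. Hence $\|\Gamma(\cdot,t)-(2\pi)^{-d}\|_{L^\infty}\ge c\,e^{-t}$, which for large $t$ is incompatible with the claimed $C(t^{-1}+t^{-d/2})e^{-t}$ at $p=\infty$, and (since $C$ is asserted to be uniform in $p$) with the claimed bound for large finite $p$ as well; an analogous evaluation at $x=(\pi/2,0,\dots,0)$ defeats the gradient bound. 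The defect in the paper's own argument is the comparison $\sum_{|\xi|\ge 1}e^{-t|\xi|^2}\le C\int_1^\infty r^{d-1}e^{-tr^2}\,dr$: as $t\to\infty$ the sum behaves like $2d\,e^{-t}$ while for $d=2$ the integral equals $e^{-t}/(2t)$, so this step is valid only on bounded time intervals --- precisely the regime where you switch to the Gaussian representation instead. In other words, your version of the estimates (blow-up $t^{-\frac d2(1-\frac1p)}$, resp.\ $t^{-\frac d2(1-\frac1p)-\frac12}$, for $t\le 1$; plain $e^{-t(1-1/p)}$ for $t\ge 1$, i.e.\ prefactors capped at a constant for large times) is the correct statement of the lemma, and it suffices for every use the paper makes of it: Remark \ref{rem_hk} needs only $Ce^{-Ct}$ for $t\ge 1$, and the Duhamel estimates in Lemma \ref{lemma-fluid-decay} use only the short-time polynomial rates. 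So keep your proof, but restate the conclusion with prefactors of the form $(1+t^{-d/2})^{1-1/p}$ and $\bigl(1+t^{-\frac d2(1-\frac1p)-\frac12}\bigr)$ rather than attempting to reproduce the printed one.
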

\begin{remark}\label{rem_hk} For $p > 1$ and $t \geq 1$, we obtain from Lemma \ref{lem_hk} that
\[
\lt\|\Gamma(\cdot,t) - \frac{1}{(2\pi)^d}\rt\|_{L^p} + \|\nabla_x \Gamma(\cdot,t)\|_{L^p} \leq Ce^{-Ct},
\]
where $C>0$ is independent of $t$.
\end{remark}

Next, we will show the exponential decay of fluid field. For convenience, we denote the vorticity field  by $\omega = \nabla_x \times u$ in two and three dimensions.

\begin{lemma}\label{lemma-fluid-decay} Suppose that $(f,u)$ is the strong solution  defined in Definition \ref{strong-sol} to the system \eqref{main}-\eqref{initial}. Under the same assumptions as in Theorem   \ref{main-thm}, there exists a constant $C>0$, independent of $t$, such that
\begin{equation}\label{u-expdecay-10}
\norm{u(\cdot,t)-u_c(t)}_{L^{\infty}}\le Ce^{-Ct},
\end{equation}
\begin{equation}\label{u-expdecay-50}
\norm{ \omega(\cdot,t)}_{L^p(\T^2)}\le Ce^{-Ct}\quad \mbox{for} \quad p<\infty,
\end{equation}
%and
\begin{equation}\label{u-expdecay-60}
\norm{ \omega(\cdot,t)}_{L^p(\T^3)}\le Ce^{-Ct}\quad \mbox{for} \quad p < 6.
\end{equation}
\end{lemma}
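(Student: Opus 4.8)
The plan is to pass to the vorticity $\omega=\nabla_x\times u$, first obtain its $L^2$ (enstrophy) decay by a direct energy estimate, then bootstrap this to the claimed $L^p$ ranges through the Duhamel representation and the kernel bounds of Lemma~\ref{lem_hk}, and finally recover \eqref{u-expdecay-10} from the vorticity decay via Biot--Savart and Sobolev embedding. Throughout set $G:=\int_{\R^d} f(u-v)\,dv$, so that Lemma~\ref{lem_drag2} gives $\|G(\cdot,t)\|_{L^2}\le Ce^{-Ct}$; \emph{only} this $L^2$ bound on the drag force is available at this stage, which is what ultimately forces the exponent $q=2$ in every convolution estimate below. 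Taking the curl of the momentum equation in \eqref{main} (the pressure drops out, in $d=3$ the convective term is absent and in $d=2$ there is no vortex stretching) yields
\[
\pa_t\omega-\Delta_x\omega+(1-\delta_{d,3})(u\cdot\nabla_x)\omega=\nabla_x\times(-G).
\]
Testing against $\omega$, the transport term vanishes since $\nabla_x\cdot u=0$ (write $(u\cdot\nabla_x)\omega=\nabla_x\cdot(u\omega)$), and integrating the forcing by parts gives $\tfrac{d}{dt}\|\omega\|_{L^2}^2+\|\nabla_x\omega\|_{L^2}^2\le\|G\|_{L^2}^2$. Because $\omega$ is a curl it is mean-free, so the Poincar\'e inequality on $\T^d$ yields $\|\nabla_x\omega\|_{L^2}^2\ge\|\omega\|_{L^2}^2$, and Gr\"onwall's lemma combined with $\|G\|_{L^2}\le Ce^{-Ct}$ gives $\|\omega(\cdot,t)\|_{L^2}\le Ce^{-Ct}$ in both dimensions.

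Next I would record a consequence needed to treat the $d=2$ convective term without circularity. From the enstrophy decay, the identity $\|\nabla_x u\|_{L^2}\le C\|\omega\|_{L^2}$ (Biot--Savart on the torus) and Poincar\'e give $\|u-u_c\|_{H^1}\le Ce^{-Ct}$; the embedding $H^1(\T^2)\hookrightarrow L^a(\T^2)$ for every $a<\infty$ then shows $\|u(\cdot,t)-u_c(t)\|_{L^a}\le Ce^{-Ct}$, whence $\|u(\cdot,t)\|_{L^a}$ stays bounded uniformly in $t$ for each finite $a$ (since $u_c$ is bounded, see Remark~\ref{rmk_limit}). In $d=3$ this step is unnecessary, the Stokes vorticity equation being linear.

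For the $L^p$ bootstrap, fix $t\ge1$ and use the mild formulation from base time $t-1$,
\[
\omega(t)=e^{\Delta_x}\omega(t-1)+\int_{t-1}^t e^{(t-s)\Delta_x}\nabla_x\times(-G(s))\,ds-(1-\delta_{d,3})\int_{t-1}^t e^{(t-s)\Delta_x}\nabla_x\cdot(u\omega)(s)\,ds,
\]
moving every spatial derivative onto the kernel and applying Young's convolution inequality. The first term is $\le\|\Gamma(\cdot,1)\|_{L^r}\|\omega(t-1)\|_{L^2}\le Ce^{-Ct}$. For the forcing term, with $\tfrac1r=\tfrac12+\tfrac1p$ (i.e. $q=2$), one has $\big\|\int_{t-1}^t\nabla_x\Gamma(\cdot,t-s)\ast(-G(s))\,ds\big\|_{L^p}\le\big(\sup_{s\in[t-1,t]}\|G(s)\|_{L^2}\big)\int_0^1\|\nabla_x\Gamma(\cdot,\tau)\|_{L^r}\,d\tau$; by Lemma~\ref{lem_hk} the time integral is finite exactly when $r<\tfrac{d}{d-1}$, which under $\tfrac1r=\tfrac12+\tfrac1p$ means $p<\infty$ for $d=2$ and $p<6$ for $d=3$, and together with $\|G(s)\|_{L^2}\le Ce^{-Ct}$ on $[t-1,t]$ this gives $\le Ce^{-Ct}$. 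The convective term ($d=2$ only) is handled identically: for a given finite $p$ choose a finite $a>p$ and write $\|u\omega\|_{L^{q'}}\le\|u\|_{L^a}\|\omega\|_{L^2}$ with $\tfrac1{q'}=\tfrac1a+\tfrac12$, so the resulting kernel exponent $r'$ satisfies $r'<2$ and $\int_0^1\|\nabla_x\Gamma(\cdot,\tau)\|_{L^{r'}}\,d\tau<\infty$, while $\|u\|_{L^a}$ is bounded (previous paragraph) and $\|\omega(s)\|_{L^2}\le Ce^{-Ct}$. This establishes \eqref{u-expdecay-50} and \eqref{u-expdecay-60}.

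Finally, choosing any $p$ with $d<p$ in the admissible range ($p>2$ for $d=2$, $3<p<6$ for $d=3$), the Calder\'on--Zygmund/Biot--Savart bound $\|\nabla_x u\|_{L^p}\le C\|\omega\|_{L^p}$ and Poincar\'e give $\|u-u_c\|_{W^{1,p}}\le C\|\omega\|_{L^p}\le Ce^{-Ct}$, and the Morrey embedding $W^{1,p}\hookrightarrow L^\infty$ for $p>d$ yields \eqref{u-expdecay-10}; for $0\le t\le1$ all estimates are absorbed into $C$ by the local-in-time regularity of the strong solution. The main obstacle is the convective term in $d=2$: moving the derivative onto the kernel is unavoidable given that we only control $G$ (and $u\omega$) in $L^2$, and the non-integrable singularity of $\|\nabla_x\Gamma(\cdot,\tau)\|_{L^r}$ at $r=d/(d-1)$ is precisely what caps the admissible exponents; closing the transport estimate without circularity is what requires first securing the $L^2$ enstrophy decay and the 2D $L^a$-boundedness of $u$ before running the Duhamel argument.
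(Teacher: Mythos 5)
Your proof is correct and follows essentially the same route as the paper: pass to the vorticity, obtain $L^2$ decay by the energy estimate plus Poincar\'e and Gr\"onwall using Lemma \ref{lem_drag2}, then bootstrap to $L^p$ via a Duhamel/heat-kernel argument with Lemma \ref{lem_hk}, whose kernel singularity yields exactly the restrictions $p<\infty$ ($d=2$) and $p<6$ ($d=3$), and finally recover \eqref{u-expdecay-10} from the $L^p$ vorticity decay by elliptic regularity and embedding. The only cosmetic differences are that you run Duhamel from base time $t-1$ (using the $L^2$ decay of $\omega(t-1)$) instead of the paper's splitting of $\int_0^t$ at $t/2$, you bound the 2D convective term through a uniform $\|u\|_{L^a}$ bound rather than the paper's direct decay estimate of $\|u\omega\|_{L^q}$, and you finish with Morrey's embedding rather than Gagliardo--Nirenberg interpolation.
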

\begin{proof}
For notational simplicity, we denote $G(x,t):=-\int_{\R^d}(u-v)f\,dv$.
We consider the vorticity equation $\omega = \nabla_x \times u$  for $d=2, 3$:
\begin{equation}\label{u-expdecay-20}
\partial_t \omega - \Delta_x \omega + (1 - \delta_{d,3})u\cdot \nabla_x \omega=\nabla_x\times G.
\end{equation}
First, we provide the exponential decay estimate of $\omega$ in $L^2(\T^d)$. Multiplying \eqref{u-expdecay-20} by $\omega$ and integrating the resulting equation gives the following energy estimate for $\omega$:
\[
\frac{d}{dt}\int_{\T^d} \abs{\omega}^2 dx+\int_{\T^d} \abs{\nabla_x \omega}^2 dx\le \int_{\T^d} \abs{G}^2 dx.
\]
Since  by Lemma \ref{lem_drag2} the right hand side of the above inequality is bounded by $Ce^{-Ct}$, applying Poincar\'e inequality yields
\[
\frac{d}{dt}\int_{\T^d} \abs{\omega}^2 dx+C\int_{\T^d} \abs{ \omega}^2 dx\le Ce^{-Ct}.
\]
We then use the Gr\"onwall's lemma to obtain
\[
\int_{\T^d} \abs{ \omega}^2 dx\le Ce^{-Ct} \quad \forall \, t\geq 0,
\]
where $C>0$ is independent of $t$. 

We next extend the above exponential decay estimate in $L^2(\T^d)$ space to $L^p(\T^d)$, where $p$ depends on the dimension.
First, we consider the case $d=3$ and write
\[
\omega(x,t)=(\Gamma\star\omega_0)(x,t) - \int_0^t \int_{\T^3} \nabla_x\Gamma(x-y,t-s)\times G(y,s)\,dyds,
\]
where $\Gamma$ is the heat kernel appeared in \eqref{heat_k}. Note that $\intt \omega_0\,dx = 0$, then it follows from Remark \ref{rem_hk} that
\[
\norm{(\Gamma*\omega_0)(t)}_{L^{p}(\T^3)}\le Ce^{-Ct}
\]
for all $1 \leq p \leq \infty$ and $t \geq 1$, where $C>0$ is independent of $t$ and $p$. On the other hand, we split the second term as follows:
\begin{align*}
\int_0^t \int_{\T^3}\nabla_x\Gamma(x-y,t-s)\times G(y,s)\,dyds &= \lt(\int_0^{\frac{t}{2}} + \int_{\frac{t}{2}}^t\rt) \int_{\T^3}\nabla_x\Gamma(x-y,t-s)\times G(y,s)\,dyds \cr
&=:I_1 + I_2.
\end{align*}
By definition, $\nabla_x \Gamma$ is regular for $s<t/2$, moreover, by Lemma \ref{lem_hk}, any $L^q$-norm of that is bounded from above by $Ce^{-Ct}$. Thus by using Young's convolution inequality, we estimate $I_1$ as
\[
\|I_1\|_{L^p}\le \int_0^{\frac{t}{2}} \norm{\nabla\Gamma(\cdot, t-s)}_{L^q}
\norm{G(\cdot,s)}_{L^2}ds\le  Ce^{-Ct}\int_0^{\frac{t}{2}}
\norm{G(\cdot,s)}_{L^2}ds\le  Ce^{-Ct},
\]
where $1/p = 1/q - 1/2$. For $I_2$, we use Lemmas \ref{lem_drag2} and \ref{lem_hk} to find
\[
\norm{I_2}_{L^p}\le \int_{\frac{t}{2}}^t (t-s)^{-\frac{3}{2}\bke{\frac{1}{2}-\frac{1}{p}}-\frac{1}{2}}\norm{G(\cdot,s)}_{L^2}ds \le\sup_{\frac{t}{2}\le s\le t}\norm{G(\cdot,s)}_{L^2}t^{-\frac{3}{2}\bke{\frac{1}{2}-\frac{1}{p}}+\frac{1}{2}}\le Ce^{-Ct}.
\]
Here we also used
%\[
$\displaystyle \frac{3}{2}\lt(\frac{1}{2}-\frac{1}{p}\rt)+\frac{1}{2} <1$
%$\]
due to $p < 6$. Combining the above estimates asserts \eqref{u-expdecay-60}, which further implies via Gagliardo-Nirenberg interpolation inequality and Korn's inequality that for $3<p < 6$
\begin{align}\label{decay_u}
\begin{aligned}
\norm{u(\cdot,t)-u_c(t)}_{L^{\infty}} &\le C\norm{u(\cdot,t)-u_c(t)}^{\theta}_{L^{2}}\norm{\nabla_x u(\cdot,t)}^{1-\theta}_{L^p}\cr
&\le C\norm{u(\cdot,t)-u_c(t)}^{\theta}_{L^{2}}\norm{\omega(\cdot,t)}^{1-\theta}_{L^p}+C\norm{u(\cdot,t)-u_c(t)}_{L^{2}}\cr
&\le  C\norm{\omega(\cdot,t)}^{\theta}_{L^{2}}\norm{\omega(\cdot,t)}^{1-\theta}_{L^p}+C\norm{\omega(\cdot,t)}_{L^{2}}\cr
&\le Ce^{-Ct},
\end{aligned}
\end{align}
where $\theta=2(p-3)/(5p-6)$ and we used \eqref{u-expdecay-60}.

Now, it remains to treat the two dimensional case. We notice that
\[
u\otimes \omega\in L^p((0,\infty);L^q(\T^2)) \quad \mbox{for all } \ 1\leq q <2 \mbox{ and } 1 \leq p \leq \infty
\]
and 
\[
\norm{(u\otimes \omega)(\cdot,t)}_{L^q}\le Ce^{-Ct}.
\]
Indeed, let $q*>0$ with $1/q*=1/q-1/2$
\begin{align*}
\norm{(u\omega)(\cdot,t)}_{L^q} &\le \norm{(u(\cdot,t)-u_c(t))\omega(\cdot,t)}_{L^q} +|u_c(t)|\norm{\omega(\cdot,t)}_{L^q}\cr
&\le \norm{u(\cdot,t)-u_c(t)}_{L^{q*}}\norm{\omega(\cdot,t)}_{L^2}+C\norm{u(\cdot,t)}_{L^2}\norm{\omega(\cdot,t)}_{L^2}\cr
&\le \norm{\nabla_x u(\cdot,t)}_{L^{2}}\norm{\omega(\cdot,t)}_{L^2}+C\norm{u(\cdot,t)}_{L^2}\norm{\omega(\cdot,t)}_{L^2}\cr
&\le C\|\omega(\cdot,t)\|^2_{L^2}+C\|u(\cdot,t)\|_{L^2}\norm{\omega(\cdot,t)}_{L^2}\cr
&\le Ce^{-Ct},
\end{align*}
where $C>0$ is independent of $t$. Since $G$ is in $L^{\infty}(\R_+;L^2(\T^2))$ by Lemma \ref{lem_drag2}, it also belongs to $L^{\infty}(\R_+;L^q(\T^2))$ for $1\le q<2$. Now we use the representation formula
\begin{align*}
\omega(x,t)&=(\Gamma\star\omega_0)(x,t) - \int_0^t\int_{\T^2}\nabla_x\Gamma(x-y,t-s) (u\otimes\omega)(y,s)\,dyds\cr
&\quad -\int_0^t\int_{\T^2}\nabla_x\Gamma(x-y,t-s)\times G(y,s)\,dyds.
\end{align*}
The first term, we can have the same estimate as in the three dimensional case. Thus it suffices to estimate the other terms.  It is worth noticing that the terms $G$ and $u\otimes \omega$ are in $L^{\infty}(\R_+;L^2(\T^2))$, and moreover they have the same exponential decay estimates.
For that reason, we only provide the decay estimate of one of them, say $G$. Similarly as before, for any $2\le p<q/(2-q)$ let us divide that into two terms:
\[
\lt(\int_0^{\frac{t}{2}} + \int_{\frac{t}{2}}^t\rt) \int_{\T^2}\nabla_x\Gamma(x-y,t-s)\times G(y,s)\,dyds =:I_3 + I_4.
\]
Then by using almost the same argument as before, we obtain
\[
\norm{I_3}_{L^p}\le \int_0^{\frac{t}{2}} \norm{\nabla_x\Gamma(\cdot,t-s)}_{L^r}
\|G(\cdot,s)\|_{L^q}\,ds\le  Ce^{-Ct}\int_0^{\frac{t}{2}}
\norm{G(\cdot,s)}_{L^q}ds\le  Ce^{-Ct}
\]
and
\[
\norm{I_4}_{L^p}\le \int_{\frac{t}{2}}^t (t-s)^{-\bke{\frac{1}{q}-\frac{1}{p}}-\frac{1}{2}}\norm{G(\cdot,s)}_{L^q}ds \le\sup_{\frac{t}{2}\le s\le t}\norm{G(\cdot,s)}_{L^q}t^{-\bke{1-\frac{1}{r}}+\frac{1}{2}}\le Ce^{-Ct},
\]
where $1=1/r+1/q-1/p$ and $C>0$ is independent of $t$ and uniformly bounded in $p$. This asserts \eqref{u-expdecay-50}. We then use a similar argument as \eqref{decay_u} to get
\[
\norm{u(\cdot,t)-u_c(t)}_{L^{\infty}} \leq C\norm{\omega(\cdot,t)}^{\theta}_{L^{2}}\norm{\omega(\cdot,t)}^{1-\theta}_{L^p}+C\norm{\omega(\cdot,t)}_{L^{2}}\le Ce^{-Ct},
\]
where $\theta = 2(p-2)/(4p-4)$.
This completes the proof.
\end{proof}

Next proposition shows the exponential decay estimate of generalized moments of particle distribution function $f$, which is a part of Theorem \ref{main-thm} for the case $k=0$.

\begin{proposition}\label{thm_main} Let $q \geq 1$ and $d=2$ or $3$.
Suppose that  $(f,u)$ is the strong solution  defined in Definition \ref{strong-sol} to the system \eqref{main}-\eqref{initial}. Under the same assumptions as in Theorem   \ref{main-thm},
there exists $p=p(q, d, \psi, f_0, u_0) > 2$ large enough such that
\[
\lt(\inttr |v - v_c(t)|^{p} f^{q}(x,v,t)\,dxdv\rt)^{1/p} \leq Ce^{-Ct} \quad \forall \,t \geq 1,
\]
where $C > 0$ is independent of $p$ and $t$.
\end{proposition}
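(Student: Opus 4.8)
The plan is to derive a differential inequality for the weighted quantity
\[
\Phi_p(t) := \inttr |v - v_c(t)|^{p} f^{q}(x,v,t)\,dxdv,
\]
and then close it using the exponential decays already established for the Lyapunov functional (Theorem \ref{thm_lt}), the fluid velocity (Lemma \ref{lemma-fluid-decay}), and the polynomial-in-$t$ growth of $\|f\|_{L^q}$ (Lemma \ref{f-growth}). First I would differentiate $\Phi_p$ in time. Two kinds of terms appear: those coming from the transport/forcing structure of the kinetic equation, and those coming from the motion of the center $v_c(t)$. For the transport part I would multiply the kinetic equation by the weight $|v-v_c|^{p} f^{q-1}$ and integrate by parts. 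The $v$-derivative falls on the weight $|v - v_c|^p$, producing a factor $p\,|v-v_c|^{p-2}(v-v_c)$, while the divergence of the force contributes the same $-d(1 + \psi \star \rho_f)$ factor as in the $L^q$ estimate, giving a benign $d\frac{q-1}{q}(\psi_M + 1)$-type growth term that is dominated later.

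\smallskip

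The heart of the matter is the force-weight interaction. Writing $F(f,u) = (u - v) + F_a(f)$, I would split $F$ against $(v - v_c)$. The drag part $u - v = (u - u_c) + (u_c - v_c) + (v_c - v)$ produces the crucial dissipative term: the piece $(v_c - v)\cdot p|v-v_c|^{p-2}(v-v_c) = -p|v-v_c|^p$ yields $-p\,\Phi_p$, a damping rate proportional to $p$. The remaining drag pieces involve $u - u_c$ and $u_c - v_c$, which by Lemma \ref{lemma-fluid-decay} and Remark \ref{rmk_limit} decay like $Ce^{-Ct}$ in $L^\infty$; paired with $p|v-v_c|^{p-1}$ and integrated against $f^q$, Young's inequality converts them into $\e p\,\Phi_p$ plus a term controlled by the decaying $L^\infty$-bound times a lower moment. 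The alignment force $F_a(f)$ is bounded by $\psi_M M_0$ times $\sup|v - v_c|$ on the support, but I would instead estimate it against $|w - v_c|$ after symmetrizing, using that $\int (w - v_c) f\,dydw = 0$ by definition of $v_c$; this keeps the alignment contribution proportional to $\Phi_p$ with a constant $\sim \psi_M M_0$, which the $-p\,\Phi_p$ dissipation dominates once $p$ is large. Finally, the $\dot v_c(t)$ term, coming from differentiating the moving center, is also controlled by the momentum-balance decay.

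\smallskip

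After assembling these, the differential inequality takes the schematic form
\[
\frac{d}{dt}\Phi_p(t) \le -C_1 p\,\Phi_p(t) + C_2\Phi_p(t) + (\text{lower-order decaying terms}),
\]
where $C_1 > 0$ is universal and $C_2$ collects the $L^q$-growth and alignment constants, both independent of $p$. Choosing $p$ large enough that $C_1 p - C_2 > 0$ gives a genuine exponential damping rate, and the inhomogeneous decaying terms are handled by Grönwall, yielding $\Phi_p(t) \le Ce^{-Ct}$ and hence the stated bound after taking the $1/p$ power. The main obstacle, and the point requiring the most care, is ensuring that the lower-order inhomogeneous terms — in particular those mixing the decaying fluid/center quantities with intermediate moments $\Phi_{p-1}$ or $\Phi_{p-2}$ — are absorbed uniformly in $p$; here I would interpolate these intermediate moments between $\Phi_p$ and $\|f\|_{L^q}^q$ (a bounded-support fact guarantees finiteness) and use Young's inequality with $p$-dependent weights so that the absorbing constant scales like $\e p$, matching the $-C_1 p$ dissipation. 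Controlling the $p$-dependence of all constants so that the final rate $C$ is genuinely independent of $p$ is the delicate bookkeeping that makes the later limit $p \to \infty$ (and thus the support estimate in Corollary \ref{main_cor}) possible.
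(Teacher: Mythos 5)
Your overall strategy coincides with the paper's: differentiate $\Phi_p(t)=\inttr |v-v_c(t)|^p f^q\,dxdv$, extract a damping term of size $p$ from the $(v_c-v)$ component of the drag force, control the $v_c'$, $u-u_c$, $u_c-v_c$ and alignment cross terms by the decays of Theorem \ref{thm_lt} and Lemma \ref{lemma-fluid-decay} via Young's inequality, and close with Gr\"onwall for $p$ large. However, your treatment of the alignment term is a genuine gap. After your symmetrization (which correctly disposes of the $(w-v_c)$ cross piece, since $\inttr |w-v_c|f\,dydw\le M_0^{1/2}(\inttr|w-v_c|^2f\,dydw)^{1/2}\le Ce^{-Ct}$), the surviving self-interaction piece is
\[
-\frac{p}{q}\inttr (\psi\star\rho_f)\,|v-v_c|^{p}f^{q}\,dxdv,
\]
whose coefficient is proportional to $p$ — it scales exactly like the drag dissipation, so it can never be ``dominated once $p$ is large'': the ratio of the two terms is independent of $p$. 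Worse, if you bound it in absolute value by a constant $\sim\psi_M M_0$ as you propose, closing the estimate would require $\psi_M M_0<1$, which is not among the hypotheses (when $\psi_m\ge 0$ the mass $M_0$ is arbitrary and $\psi_M$ may be large). The paper instead keeps the sign: $\psi\star\rho_f\ge \psi_m M_0$, so this piece is $\le -\frac pq\,\psi_m M_0\,\Phi_p$, and added to the drag dissipation $-\frac pq\Phi_p$ it yields the net coefficient $-\frac pq(1+\psi_m M_0-C\delta)$, which is negative either because $\psi_m\ge 0$ or because $M_0$ is assumed small when $\psi_m<0$ (Theorem \ref{thm_lt}). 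The control comes from the smallness of $|\psi_m|M_0$, not from the largeness of $p$.

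A second, smaller error has real consequences for the logic: Lemma \ref{f-growth} gives \emph{exponential} growth, $\inttr f^q\,dxdv\le Ce^{d(q-1)(\psi_M+1)t}$, not polynomial growth as you state. Your ``lower-order decaying terms'' have the form $C_{q,\delta}^p\bigl(e^{-Cpt}+\|u-u_c\|_{L^\infty}^p\bigr)\inttr f^q\,dxdv$, and these decay only when $p$ is chosen so large that $Cp>d(q-1)(\psi_M+1)$; this is precisely why $p$ must depend on $q$ and $\psi$ in the statement, a dependence your sketch does not explain. Once the alignment term is treated with its sign as above and the exponential rate of Lemma \ref{f-growth} is accounted for in the choice of $p$, your argument closes and reproduces the paper's proof, including the uniformity in $p$ of the final constant after taking the $1/p$-th power.
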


\begin{proof}%[Proof of Theorem \ref{thm_main}]
Straightforward computations give
\begin{align*}
&\frac{1}{q}\frac{d}{dt}\inttr |v - v_c(t)|^{p} f^{q}\,dxdv\cr
&\quad = -\frac pq \inttr |v - v_c(t)|^{p-2}(v - v_c) \cdot v_c'(t) f^{q}\,dxdv + \inttr |v - v_c(t)|^{p} f^{q-1} \pa_t f\,dxdv
\cr
&\quad 
=: J_1 + J_2.
\end{align*}
To estimate $J_1$, we note, due to Theorem \ref{thm_lt}, that
\begin{align*}
|v_c'(t)| &= \lt|\inttr (u-v)f\,dxdv\rt|
%\cr
%&
=\lt| \inttr (u- u_c(t) + u_c(t) - v_c(t) + v_c(t) - v)f\,dxdv \rt|\cr
&\leq C\|\rho_f\|_{L^\infty}\lt(\intt |u - u_c(t)|^2\,dx \rt)^{1/2} + M_0|u_c(t) - v_c(t)| + \lt( \inttr |v - v_c(t)|^2 f\,dxdv\rt)^{1/2}\cr
&\leq Ce^{-Ct}
\end{align*}
for some $C>0$ independent of $t$. Thus it follows from Young's inequality that
\begin{align*}
J_1 &\leq \frac pq \inttr |v-v_c(t)|^{p-1}|v_c'(t)|f^{q}\,dxdv \cr
&\leq \frac{p \delta}{q} \inttr |v-v_c(t)|^{p} f^{q}\,dxdv + C_{q,\delta}^p e^{-Cp t}\inttr f^{q}\,dxdv,
\end{align*}
where $C_{q,\delta} > 0$ is independent of $p$ and $\delta >0$ will be determined later.

On the other hand, using the integration by parts, we compute $J_2$ as follows:
\begin{align*}
J_2 &= \inttr |v - v_c(t)|^{p} f^{q-1} ( - v \cdot \nabla_x f - \nabla_v \cdot (F(f,u)f))\,dxdv\cr
&=  \inttr \nabla_v \lt( |v - v_c(t)|^{p} f^{q-1}\rt) \cdot F(f,u) f\,dxdv,
\end{align*}
where we used 
\[
-\inttr |v - v_c(t)|^{p} f^{q-1}  v \cdot \nabla_x f \,dxdv= -\frac{1}{q} \inttr | v - v_c(t)|^{p} \nabla_x \cdot \lt(v f^{q} \rt)\,dxdv = 0.
\]
Continuing computations for $J_2$, we get
\begin{align*}
J_2 &= \inttr \lt(p|v-v_c(t)|^{p-2} (v - v_c(t))f^{q-1} + |v - v_c(t)|^{p} (q-1)f^{q-2} \nabla_v f \rt) \cdot F(f,u) f\,dxdv\cr
&= p\inttr |v - v_c(t)|^{p-2} (v - v_c(t))\cdot F(f,u) f^{q}\,dxdv 
%\cr
%&\quad 
+ (q-1)\inttr  |v - v_c(t)|^{p} f^{q-1}\nabla_v f \cdot F(f,u)\,dxdv.
\end{align*}
We further estimate the second term on the right side of the above equality as
\begin{align*}
&(q-1)\inttr  |v - v_c(t)|^{p} f^{q-1}\nabla_v f \cdot F(f,u)\,dxdv\cr
&\quad = \frac{q-1}{q}\inttr |v - v_c(t)|^{p} \nabla_v f^{q} \cdot F(f,u)\,dxdv
%\cr
%&\quad 
= \frac{1-q}{q}\inttr \nabla_v \cdot \lt(|v - v_c(t)|^{p} F(f,u) \rt)f^{q}\,dxdv\cr
&\quad = \frac{1-q}{q}\inttr \lt(p|v-v_c(t)|^{p-2}(v-v_c(t)) \cdot F(f,u) - d|v- v_c(t)|^{p}(1 + \psi\star\rho_f) \rt) f^{q}\,dxdv.
\end{align*}
Summing up, we have
\begin{align*}
J_2 &= \frac pq\inttr |v-v_c(t)|^{p-2}(v - v_c(t)) \cdot F(f,u) f^{q}\,dxdv
\cr
&\quad 
+ d\lt(1 - \frac{1}{q} \rt)\inttr (1 + \psi \star \rho_f) |v - v_c(t)|^{p}f^{q}\,dxdv
%\cr
%& 
=: J_2^1 +J_2^2.
\end{align*}
We next estimate the first term $J_2^1=: J_2^{11}+J_2^{12}$ as
\begin{align*}
J_2^{11}&:= \frac pq \inttr |v-v_c(t)|^{p-2}(v - v_c(t)) \cdot F_a(f) f^{q}\,dxdv\cr
&= \frac pq \inttrd |v-v_c(t)|^{p-2}(v - v_c(t)) \cdot (w-v_c(t) + v_c(t) - v) \psi(x-y) f(y,w) f^{q}(x,v)\,dxdydvdw \cr
&= -\frac pq \inttrd |v- v_c(t)|^{p}\psi(x-y)f(y,w) f^{q}(x,v)\,dxdydvdw \cr
&\quad + \frac pq \inttrd |v-v_c(t)|^{p-2}(v - v_c(t)) \cdot (w-v_c(t)) \psi(x-y) f(y,w) f^{q}(x,v)\,dxdydvdw\cr
& \leq - \frac{p\psi_m M_0}{q} \inttr | v- v_c(t)|^{p} f^{q}\,dxdv 
\cr
&\quad 
+ \frac{p\psi_M}{q} \inttr | v - v_c(t)|^{p-1} \lt(\inttr | w- v_c(t)|^2 f\,dydw\rt)^{1/2} f^{q}\,dxdv \cr
& \leq -\frac{p\psi_m M_0}{q} \inttr |v - v_c(t)|^{p}f^{q}\,dxdv + C_{q,\delta}^p e^{-Cp t}\inttr f^{q}\,dxdv 
\cr
&\quad 
+ \frac{p \psi_M \delta}{q}\inttr  |v - v_c(t)|^{p}f^{q}\,dxdv.
\end{align*}
Here $\delta >0$ will be chosen small enough later. Similarly, we also obtain
\begin{align*}
J_2^{12} &:=\frac pq \inttr |v-v_c(t)|^{p-2}(v - v_c(t)) \cdot (u-v) f^{q}\,dxdv\cr
& =\frac pq\inttr |v-v_c(t)|^{p-2}(v - v_c(t)) \cdot (u-u_c(t) + u_c(t) - v_c(t) + v_c(t) - v) f^{q}\,dxdv\cr
& \leq \frac pq \inttr |v-v_c(t)|^{p-1}|u - u_c(t)|f^{q}\,dxdv + \frac pq \inttr |v-v_c(t)|^{p-1}|u_c(t) - v_c(t)|f^{q}\,dxdv\cr
& \quad - \frac pq \inttr |v - v_c(t)|^{p}f^{q}\,dxdv\cr
& \leq -\frac{p}{q}\lt(1 - \delta \rt) \inttr |v - v_c(t)|^{p}f^{q}\,dxdv + C_{q,\delta}^p\|u - u_c(t)\|_{L^\infty}^{p} \inttr f^{q}\,dxdv 
\cr
&\quad 
+ C_{q,\delta}^p e^{-Cpt}\inttr f^{q}\,dxdv.
\end{align*}
Adding up the estimates, we obtain
\begin{align*}
&\frac{d}{dt}\inttr |v - v_c(t)|^{p} f^{q}\,dxdv \cr
&\quad \leq -p\psi_mM_0 \inttr |v - v_c(t)|^{p}f^{q}\,dxdv + C_{q,\delta}^p e^{-Cp t}\inttr f^{q}\,dxdv  -p(1 - C\delta) \inttr |v - v_c(t)|^{p}f^{q}\,dxdv\cr
&\qquad + C_{q,\delta}^p \|u - u_c(t)\|_{L^\infty}^{p} \inttr f^{q}\,dxdv - d\lt(q - 1 \rt)\inttr (1 + \psi \star \rho_f) |v - v_c(t)|^{p}f^{q}\,dxdv\cr
&\quad \leq -p\lt( 1 + \psi_mM_0 - C \delta \rt)\inttr |v - v_c(t)|^{p}f^{q}\,dxdv + C_{q,\delta}^p \lt(e^{-Cpt} + \|u - u_c(t)\|_{L^\infty}^{p} \rt) \inttr f^{q}\,dxdv,
\end{align*}
where $C>0$ is independent of $\delta$ and $p$, and we used $1 + \psi \star \rho_f \geq 1 + \|\rho_f\|_{L^1}\psi_m = 1+ \psi_mM_0 >0$ for $M_0 > 0$ small enough.
We further use Lemma \ref{f-growth} to have
\begin{align}\label{decay_fq}
\begin{aligned}
&\frac{d}{dt}\inttr |v - v_c(t)|^{p} f^{q}\,dxdv
\leq -p\lt( 1 + \psi_m M_0 - C \delta \rt)\inttr |v - v_c(t)|^{p}f^{q}\,dxdv\cr
&\qquad + C_{q,\delta}^p \lt(e^{-Cpt} + \|u - u_c(t)\|_{L^\infty}^{p} \rt) \lt(\inttr f_0^{q}\,dxdv\rt) e^{d(q-1)(\psi_M+1)t},
\end{aligned}
\end{align}
where $C_{q,\delta} > 0$ is independent of $p$.
Putting the estimate  \eqref{u-expdecay-10} into \eqref{decay_fq} yields
\begin{align*}
\frac{d}{dt}\inttr |v - v_c(t)|^{p} f^{q}\,dxdv
& \leq -p\lt( 1 + \psi_m M_0 - C\delta \rt)\inttr |v - v_c(t)|^{p}f^{q}\,dxdv\cr
&\quad + C_{q,\delta}^p \lt(e^{-Cpt} \rt) \lt(\inttr f_0^{q}\,dxdv\rt) e^{d(q-1)(\psi_M+1)t}.
\end{align*}
We finally choose $\delta > 0$ small enough so that $1 + \psi_m -C\delta > 0$ and apply Gr\"onwall's lemma to have
\begin{align*}
\lt(\inttr |v - v_c(t)|^{p} f^{q}\,dxdv \rt)^{1/p} &\leq \lt(\inttr |v - v_c(0)|^{p} f_0^{q}\,dxdv \rt)^{1/p} e^{-\lt( 1 + \psi_mM_0 -C\delta\rt)t}\cr
&\quad + C_{q,\delta} e^{- (C - d(q-1)(\psi_M+1)/p)t},
\end{align*}
where $C_{q,\delta} > 0$ is independent of $p$ and $t$. This completes the proof.
\end{proof}
\begin{remark}The decay estimate in Proposition \ref{thm_main} still holds even in the absence of velocity alignment force, i.e., $\psi \equiv 0$.
\end{remark}
As a consequence of Proposition \ref{thm_main}, we can show that the support of $f$ in velocity shrinks to a point $(M_0 v_c(0) + u_c(0))/(1 + M_0)$ with an exponential rate of decay. This proves Theorem \ref{main-thm} (iii). Recall that $\Sigma_v(t)$ represents the $v$-projection of support of $f$.

\begin{corollary}\label{main_cor}
Suppose that $(f,u)$ is the strong solution  defined in Definition \ref{strong-sol} to the system \eqref{main}-\eqref{initial}.
Let $v^\infty = (M_0v_c(0) + u_c(0))/(1 + M_0)$.
Under the same assumptions as in Theorem   \ref{main-thm},
we have
\[
\sup_{v \in \Sigma_v(t)} |v - v^\infty| \leq Ce^{-Ct} \quad \forall\, t\geq 0,
\]
where $C>0$ is independent of $t$.
\end{corollary}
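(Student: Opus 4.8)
The plan is to obtain the velocity-support estimate by sending $p \to \infty$ in the weighted moment bound of Proposition \ref{thm_main}, exploiting crucially that the constant there does not depend on $p$. Since Remark \ref{rmk_limit} already gives $|v_c(t) - v^\infty| \le Ce^{-Ct}$, the triangle inequality
\[
\sup_{v \in \Sigma_v(t)} |v - v^\infty| \le \sup_{v \in \Sigma_v(t)} |v - v_c(t)| + |v_c(t) - v^\infty|
\]
reduces the claim to showing $R(t) := \sup_{v \in \Sigma_v(t)} |v - v_c(t)| \le Ce^{-Ct}$. I would establish this first for $t \ge 1$ (the range covered by Proposition \ref{thm_main}) and then absorb the bounded interval $t \in [0,1]$ into the constant. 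It is convenient to use the choice $q = 1$ in Proposition \ref{thm_main}, for which the reference measure $f(\cdot,\cdot,t)\,dxdv$ has the conserved finite total mass $M_0$ by Lemma \ref{lem_energy} (i); this makes the passage from an $L^p$-average to the $L^\infty$-quantity $R(t)$ transparent.

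For the limit argument, I would fix $t \ge 1$ and $\varepsilon \in (0, R(t))$ and introduce the set
\[
A := \bket{(x,v) \in \T^d \times \R^d : f(x,v,t) > 0 \ \text{and}\ |v - v_c(t)| > R(t) - \varepsilon}.
\]
By the definition of $\Sigma_v(t)$ as the $v$-projection of the support of $f(\cdot,\cdot,t)$, every open neighborhood of a support point carries positive $f\,dxdv$-mass; choosing a support point whose velocity component exceeds $R(t) - \varepsilon$ in distance from $v_c(t)$ shows that $m := \int_A f(x,v,t)\,dxdv > 0$. On $A$ one has $|v - v_c(t)|^{p} \ge (R(t) - \varepsilon)^{p}$, hence
\[
\lt(\inttr |v - v_c(t)|^{p} f\,dxdv\rt)^{1/p} \ge (R(t) - \varepsilon)\, m^{1/p}.
\]
Combining this with Proposition \ref{thm_main} (taken with $q = 1$), whose constant $C$ is independent of $p$, gives $(R(t) - \varepsilon)\, m^{1/p} \le Ce^{-Ct}$ for all sufficiently large $p$. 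Since $m > 0$ is a fixed number once $t$ and $\varepsilon$ are fixed, letting $p \to \infty$ yields $m^{1/p} \to 1$, so $R(t) - \varepsilon \le Ce^{-Ct}$; finally letting $\varepsilon \to 0$ gives $R(t) \le Ce^{-Ct}$ for every $t \ge 1$.

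It remains to cover $t \in [0,1]$ and to flag the one delicate point. The hypothesis that the $v$-support of $f_0$ is bounded, together with the velocity characteristics $\dot v = F(f,u) = F_a(f) + u - v$ whose right-hand side is bounded on any compact time interval for a strong solution, keeps $\Sigma_v(t)$ inside a fixed ball on $[0,1]$; thus $\sup_{v \in \Sigma_v(t)} |v - v^\infty| \le C_0$ there, and since $e^{-Ct} \ge e^{-C}$ on $[0,1]$ this range is absorbed into the constant. Assembling the two regimes with the triangle inequality and Remark \ref{rmk_limit} completes the proof. The hard part is the interchange of limits: the whole argument succeeds only because the constant in Proposition \ref{thm_main} is independent of $p$, so that the harmless factor $m^{1/p} \to 1$ is not swamped by a $p$-dependent growth on the right-hand side; the positivity of $m$, guaranteed by the definition of the support, is the other ingredient that must be verified.
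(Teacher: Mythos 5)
Your proposal is correct and follows essentially the same route as the paper: both arguments hinge on the $p$-independence of the constant in Proposition \ref{thm_main}, pass to the limit $p \to \infty$, and use the fact that the definition of the support guarantees positive $f\,dxdv$-mass on the region where $|v - v_c(t)|$ (respectively $|v - v^\infty|$ in the paper) is within $\varepsilon$ of its supremum, your set $A$ playing exactly the role of the paper's $\T^d \times \Sigma_v^\epsilon(t)$. The only differences are cosmetic — you center the moments at $v_c(t)$ and add $|v_c(t) - v^\infty|$ afterwards, whereas the paper centers at $v^\infty$ from the start — and you additionally spell out the regime $t \in [0,1]$ (via boundedness of the characteristics), a point the paper's own proof leaves implicit even though the corollary is stated for all $t \geq 0$.
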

\begin{proof} It follows from Remark \ref{rmk_limit} and Theorem \ref{thm_main} that
\[
\sup_{p \geq 1}\lt(\inttr | v - v^\infty|^p f(x,v,t)\,dxdv\rt)^{1/p} \leq C e^{-Ct}.
\]
Set
\[
S_v(t) := \sup_{v \in \Sigma_v(t)} |v - v^\infty|.
\]
We claim that
\bq\label{lim_mp}
\lt( \inttr | v - v^\infty |^p f(x,v,t)\,dxdv\rt)^{1/p} \to S_v(t) \quad \mbox{as} \quad p \to \infty.
\eq
It is clear that
\[
\lt( \inttr | v - v^\infty |^p f(x,v,t)\,dxdv\rt)^{1/p} \leq S_v(t) \lt(\inttr f(x,v,t)\,dxdv \rt)^{1/p} = S_v(t)M_0^{1/p} \leq S_v(t).
\]
On the other hand, for a fixed $\epsilon > 0$ and $t > 0$, if we let
\[
\Sigma^\epsilon_v(t):= \lt\{v \in \Sigma_v(t) : |v - v^\infty| \geq S_v(t) - \epsilon\rt\}
\]
for $\epsilon < S_v(t)$, then we find
\begin{align*}
\lt( \inttr | v -v^\infty|^p f(x,v,t)\,dxdv\rt)^{1/p} &\geq \lt( \int_{\T^d \times \Sigma^\epsilon_v(t)} \lt( S_v(t) - \epsilon\rt)^p f(x,v,t)\,dxdv\rt)^{1/p}\cr
&=\lt( S_v(t) - \epsilon\rt)\lt( \int_{\T^d \times \Sigma^\epsilon_v(t)}  f(x,v,t)\,dxdv\rt)^{1/p}.
\end{align*}
This implies
\[
\liminf_{p \to \infty}\lt( \inttr | v - v^\infty|^p f(x,v,t)\,dxdv\rt)^{1/p} \geq S_v(t) - \epsilon,
\]
and we have \eqref{lim_mp}. This completes the proof.
\end{proof}

%%%%%%%%%%%%%%%%%%%%%%%%%%%%%%%%%%%%%
%
%
% \section{High-order estimates}
%
%
%%%%%%%%%%%%%%%%%%%%%%%%%%%%%%%%%%%%%

\subsection{Exponential decays of weighted $W^{2,q}$ estimates}\label{sec:high}

In this subsection, we estimate $W^{2,q}$ norm of $f$. For this, we need to control the propagation of support of $f$ in velocity.

Next lemma shows that the support of $f$ in velocity is uniformly bounded in time.

\begin{lemma}\label{lem_supp}
Suppose that $(f,u)$ is the strong solution  defined in Definition \ref{strong-sol} to the system \eqref{main}-\eqref{initial}.
Under the same assumptions as in Theorem   \ref{main-thm}, we have
\[
R_v(t):= \max_{v \in \Sigma_v(t)}|v| \leq C_0
\]
for some $C_0 > 0$ which is independent of $t$.
\end{lemma}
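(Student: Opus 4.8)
The plan is to read off the uniform bound directly from the exponential shrinking of the velocity support that has already been established in Corollary \ref{main_cor}. Recall that $v^\infty = (M_0 v_c(0) + u_c(0))/(1+M_0)$ is a fixed point of $\R^d$, and since the initial averaged velocities $v_c(0)$, $u_c(0)$ and the mass $M_0$ are all finite, we have $|v^\infty| < \infty$. Corollary \ref{main_cor} supplies a constant $C > 0$, independent of $t$, with $\sup_{v \in \Sigma_v(t)}|v - v^\infty| \leq Ce^{-Ct}$ for all $t \geq 0$; the substantive work (the moment decay of Proposition \ref{thm_main} and its limiting form) is therefore already done, and what remains is bookkeeping.

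First I would apply the triangle inequality pointwise on $\Sigma_v(t)$: for every $v \in \Sigma_v(t)$,
\[
|v| \leq |v^\infty| + |v - v^\infty| \leq |v^\infty| + \sup_{w \in \Sigma_v(t)}|w - v^\infty|.
\]
Taking the maximum over $v \in \Sigma_v(t)$ and invoking Corollary \ref{main_cor} then gives
\[
R_v(t) = \max_{v \in \Sigma_v(t)}|v| \leq |v^\infty| + Ce^{-Ct} \leq |v^\infty| + C =: C_0,
\]
where in the last step I used $e^{-Ct} \leq 1$ for $t \geq 0$. Since $C_0$ depends only on $|v^\infty|$ and the constant furnished by Corollary \ref{main_cor}, it is independent of $t$, which is exactly the claim. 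In this route there is no genuine obstacle: the lemma is a direct consequence of the already-proven exponential collapse of the $v$-support onto $v^\infty$.

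For robustness I would also keep in mind a self-contained argument along the characteristics $(\dot X, \dot V) = (V,\, F(f,u)(X,V,t))$ with $\dot V = u(X,t) - V + F_a(f)(X,V,t)$, whose forward flow carries the bounded initial support $\Sigma_v(0)$ onto $\Sigma_v(t)$. Computing $\tfrac12\tfrac{d}{dt}|V|^2 = V\cdot(u(X,t)-V) + V\cdot F_a(f)(X,V,t)$, the drag contributes the damping $-|V|^2$, while the alignment term yields $-|V|^2(\psi\star\rho_f)(X,t)$ together with a cross term bounded by $\|\psi\|_{L^\infty} M_0\, R_v(t)\,|V|$. Using $\psi\star\rho_f \geq \psi_m M_0$ and the uniform bound $\norm{u(\cdot,t)}_{L^\infty} \leq \norm{u(\cdot,t)-u_c(t)}_{L^\infty} + |u_c(t)| \leq C$ (from Lemma \ref{lemma-fluid-decay} and Remark \ref{rmk_limit}), one is led to a Riccati-type differential inequality for $R_v(t)$ whose quadratic coefficient is negative once $M_0$ is small enough. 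Here the main obstacle would be the self-coupling of the alignment term to $R_v(t)$ and the fact that $t \mapsto R_v(t)$ is merely Lipschitz, so the inequality must be read in the sense of the upper Dini derivative. Since Corollary \ref{main_cor} is already at hand, I would present the short triangle-inequality proof and leave the characteristic argument as an aside.
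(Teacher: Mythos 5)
Your proof is correct and is essentially identical to the paper's own argument: both bound $|v| \leq |v - v^\infty| + |v^\infty|$, take the maximum over $\Sigma_v(t)$, and invoke Corollary \ref{main_cor} (which is established before this lemma, so there is no circularity) together with the finiteness of $|v^\infty|$. The characteristic-flow argument you sketch as an aside is not needed and is not used in the paper.
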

\begin{proof} Note that $|v| \leq |v - v^\infty| + |v^\infty|$, where $v^\infty = (M_0 v_c(0) + u_c(0))/(1 + M_0)$. Taking the maximum of the above inequality over $\Sigma_v(t)$ and using Corollary \ref{main_cor} give
\[
R_v(t) \leq \max_{v \in \Sigma_v(t)}|v - v^\infty| + |v^\infty| \leq Ce^{-Ct} + |v^\infty| \leq C_0
\]
for some $C_0 > 0$ which is independent of $t$.
\end{proof}
We next present an auxiliary lemma showing the exponential decay of $L^p$-norm of the drag force in the fluid equation in \eqref{main}.

\begin{lemma}\label{lem_drag} Suppose that  $(f,u)$ is the strong solution  defined in Definition \ref{strong-sol} to the system \eqref{main}-\eqref{initial}. Under the same assumptions as in Theorem  \ref{main-thm},  we have
\[
\lt\|\int_{\R^d} f(u-v)\,dv\rt\|_{L^p} \leq Ce^{-Ct}
\]
for $2\le p\leq \infty$, where $C>0$ is independent of $t$ and $p$.
\end{lemma}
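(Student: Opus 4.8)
The plan is to reproduce the decomposition used in the proof of Lemma \ref{lem_drag2} but to carry every estimate out in $L^p(\T^d)$ rather than in $L^2$, while carefully tracking the dependence of each constant on $p$. Writing, as before,
\[
\int_{\R^d} f(u-v)\,dv = \rho_f\,(u-u_c) + \rho_f\,(u_c - v_c) + \int_{\R^d} f\,(v_c - v)\,dv,
\]
I would estimate the three contributions separately in $L^p$ and then sum. The whole point is that each piece is controlled by a factor decaying exponentially together with a factor bounded uniformly in $p$.

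For the first term I would use $\|\rho_f(u-u_c)\|_{L^p} \le \|\rho_f\|_{L^\infty}\|u-u_c\|_{L^p}$ together with $\|u-u_c\|_{L^p} \le (2\pi)^{d/p}\|u-u_c\|_{L^\infty}$, which follows from the finite measure of $\T^d$; since $(2\pi)^{d/p}$ is bounded uniformly for $p \ge 2$, the bound \eqref{u-expdecay-10} of Lemma \ref{lemma-fluid-decay} yields a bound of the form $Ce^{-Ct}$ with $C$ independent of $p$. For the second term I would use $\|\rho_f(u_c-v_c)\|_{L^p} \le \|\rho_f\|_{L^p}\,|u_c - v_c|$, where $|u_c - v_c| \le Ce^{-Ct}$ follows from Theorem \ref{thm_lt} (the third term of $\mathcal{L}$ together with the conservation $M(t)=M_0$ from Lemma \ref{lem_energy} (i)), and where the factor $\|\rho_f\|_{L^p} \le \|\rho_f\|_{L^\infty}^{1-1/p}M_0^{1/p} \le \|\rho_f\|_{L^\infty}^{1-1/p}$ (using $M_0 \le 1$) is again controlled uniformly in $p$ by interpolation between $L^1$ and $L^\infty$.

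The third term is where the support-shrinking result of Section \ref{sec:zero} pays off. By Corollary \ref{main_cor} one has $|v - v^\infty| \le Ce^{-Ct}$ for every $v \in \Sigma_v(t)$, while Remark \ref{rmk_limit} gives $|v_c(t) - v^\infty| \le Ce^{-Ct}$; hence $|v_c(t) - v| \le Ce^{-Ct}$ uniformly over the velocity support of $f$. This produces the pointwise bound $\left|\int_{\R^d} f(v_c - v)\,dv\right| \le Ce^{-Ct}\rho_f$, and taking $L^p$-norms gives $\left\|\int_{\R^d} f(v_c - v)\,dv\right\|_{L^p} \le Ce^{-Ct}\|\rho_f\|_{L^p} \le Ce^{-Ct}$, uniformly in $p$ by the same interpolation as above. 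Summing the three estimates delivers the claim, with $p=\infty$ treated identically (each $L^p$ bound being replaced by its obvious $L^\infty$ counterpart).

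Structurally the proof is routine once the ingredients of Section \ref{sec:zero} are in place, so the only genuine point requiring care is to guarantee that no constant blows up as $p \to \infty$. Concretely, this amounts to keeping $(2\pi)^{d/p}$ and $\|\rho_f\|_{L^p}$ under uniform control, both of which reduce to the elementary embedding/interpolation facts used above. For this reason I would deliberately route the third-term estimate through the support bound of Corollary \ref{main_cor} rather than through a velocity-moment estimate such as Proposition \ref{thm_main}, precisely so as to avoid importing the $p$-dependent exponent that the moment estimate would bring in.
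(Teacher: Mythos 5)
Your proof is correct, and it follows the paper's decomposition
\[
\int_{\R^d} f(u-v)\,dv = \rho_f(u-u_c) + \rho_f(u_c - v_c) + \int_{\R^d} f(v_c - v)\,dv,
\]
with the first two terms handled exactly as in the paper (the paper writes $\|\rho_f\|_{L^p}\|u-u_c\|_{L^\infty}$ where you write $\|\rho_f\|_{L^\infty}\|u-u_c\|_{L^p}$, an immaterial difference). The genuine divergence is in the third term. The paper estimates it via a Jensen/H\"older-type inequality,
\[
\int_{\T^d}\lt|\int_{\R^d} f(v_c - v)\,dv \rt|^p dx \leq \|\rho_f\|_{L^\infty}^{p-1}\inttr f|v_c - v|^p\,dxdv,
\]
and then invokes the weighted moment estimate of Proposition \ref{thm_main} (with $q=1$), relying on the fact that the constant there was shown to be independent of $p$; you instead route through Corollary \ref{main_cor} and Remark \ref{rmk_limit} to get the pointwise bound $|v-v_c(t)|\le Ce^{-Ct}$ on the velocity support of $f$, which makes the uniformity in $p$ (and the case $p=\infty$) completely transparent, since the third term is then dominated by $Ce^{-Ct}\rho_f$ pointwise. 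The two routes are logically equivalent at bottom — Corollary \ref{main_cor} is itself obtained from Proposition \ref{thm_main} by letting $p\to\infty$, so your argument is a repackaging of the same underlying moment estimate — and there is no circularity, since Corollary \ref{main_cor} is established in Section \ref{sec:zero} before Lemma \ref{lem_drag}. Your closing worry about ``the $p$-dependent exponent that the moment estimate would bring in'' is slightly misplaced, because Proposition \ref{thm_main} was stated precisely with constants independent of $p$; still, your version arguably isolates the $p$-uniformity more cleanly than the paper's.
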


\begin{proof} A straightforward computation gives
\[
\int_{\R^d} f(u-v)\,dv = \rho_f(u-u_c) + \rho_f(u_c - v_c) + \int_{\R^d} f(v_c - v)\,dv,
\]
and taking $L^p$-norm to the above yields
\begin{align*}
\lt\|\int_{\R^d} f(u-v)\,dv\rt\|_{L^p} &\leq \|\rho_f\|_{L^p}(\|u - u_c\|_{L^\infty} + |u_c - v_c| ) + \lt\| \int_{\R^d} f(v_c - v)\,dv \rt\|_{L^p} \cr
&\leq Ce^{-Ct} + \lt\| \int_{\R^d} f(v_c - v)\,dv \rt\|_{L^p}.
\end{align*}
On the other hand, we note that
\[
\int_{\T^d}\lt|\int_{\R^d} f(v_c - v)\,dv \rt|^p dx \leq \int_{\T^d}\rho_f^{p-1} \lt(\int_{\R^d} f|v_c - v|^p\,dv \rt) dx \leq \|\rho_f\|_{L^\infty}^{p-1}\inttr f|v_c - v|^p\,dxdv.
\]
Therefore, due to Theorem \ref{thm_main}, we obtain
\[
\lt\|\int_{\R^d} f(u-v)\,dv\rt\|_{L^p} \leq \|\rho_f\|_{L^\infty}^{\frac{p-1}{p}}
\bke{\inttr f|v_c - v|^p\,dxdv}^{\frac{1}{p}}\le Ce^{-Ct},
\]
where $C>0$ is independent of $t$ and $p$. This completes the proof.
\end{proof}

Next, we control derivatives of $f$ with respect to phase variables in terms of spatial gradients of fluid velocity.

\begin{lemma}\label{lem_f}Let $T>0$ and suppose that $(f,u)$ is the strong solution  defined in Definition \ref{strong-sol} to the system \eqref{main}-\eqref{initial}. Under the same assumptions as in Theorem  \ref{main-thm},  we have
\[
\|f(\cdot,\cdot,t)\|_{W^{1,q}} \leq \|f_0\|_{W^{1,q}} \exp\lt( Ct + C\int_0^t \|\nabla_x u\|_{L^\infty}\,d\tau\rt),
\]
where $C>0$ is independent of $t$.

\end{lemma}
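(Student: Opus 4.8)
The plan is to differentiate the kinetic equation with respect to the phase variables and perform a standard $L^q$-energy estimate on the first-order derivatives, closing the inequality via Gr\"onwall's lemma. First I would apply a single derivative $\partial$, standing for either $\nabla_x$ or $\nabla_v$, to the transport equation
\[
\partial_t f + v\cdot\nabla_x f + \nabla_v\cdot(F(f,u)f) = 0,
\]
and observe that the commutators generate lower-order terms. Differentiating in $v$ produces an extra $\nabla_x f$ (from $\partial_{v_j}(v\cdot\nabla_x f) = \partial_{x_j} f + v\cdot\nabla_x\partial_{v_j}f$), while differentiating in $x$ brings in the spatial gradient of the force $F$. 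Crucially, $F(f,u) = F_a(f) + u - v$, so derivatives of $F$ involve $\nabla_x u$, $\nabla_x(\psi\star\cdot)$, and constants; the factor $\nabla_x u$ is exactly why the time-integral $\int_0^t\|\nabla_x u\|_{L^\infty}\,d\tau$ appears in the exponent.

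Next I would multiply the equation for $\partial f$ by $|\partial f|^{q-2}\partial f$ and integrate over $\T^d\times\R^d$. The transport term $v\cdot\nabla_x(\partial f)$ and the convective part $F(f,u)\cdot\nabla_v(\partial f)$ integrate to a divergence plus a term proportional to $\nabla_v\cdot F = -d(1+\psi\star\rho_f)$, which is bounded by \eqref{april02-10} and contributes a controlled constant times $\|\partial f\|_{L^q}^q$. The remaining terms are the genuine source terms: the $\nabla_x f$ arising from $v$-differentiation, which is absorbed into $\|f\|_{W^{1,q}}^q$; the velocity-alignment contributions, bounded using $\psi\in\mc^2$ and $\psi_m,\psi_M$ finite together with mass conservation; and the drag contribution $\nabla_x u\cdot\nabla_v f$, whose $L^q$-estimate is controlled by $\|\nabla_x u\|_{L^\infty}\|\nabla_v f\|_{L^q}$. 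Collecting everything yields a differential inequality of the form
\[
\frac{d}{dt}\|\partial f\|_{L^q}^q \leq \lt(C + C\|\nabla_x u\|_{L^\infty}\rt)\|f\|_{W^{1,q}}^q.
\]

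Summing over the two choices $\partial = \nabla_x$ and $\partial = \nabla_v$ and adding the zeroth-order bound from Lemma \ref{f-growth}, I would obtain
\[
\frac{d}{dt}\|f\|_{W^{1,q}}^q \leq \lt(C + C\|\nabla_x u\|_{L^\infty}\rt)\|f\|_{W^{1,q}}^q,
\]
and Gr\"onwall's lemma then delivers the stated bound with the exponent $Ct + C\int_0^t\|\nabla_x u\|_{L^\infty}\,d\tau$ after taking $q$-th roots. The main obstacle, though largely bookkeeping, is organizing the commutator structure so that every source term is either a lower-order quantity absorbable into $\|f\|_{W^{1,q}}$ or carries a clean factor of $\|\nabla_x u\|_{L^\infty}$; in particular one must verify that the alignment force contributes only $t$-independent constants (using that $\psi$, $\nabla_x\psi$ are bounded and the density $\rho_f$ has conserved mass), so that the only genuinely time-varying coefficient is the $\nabla_x u$ term that is flagged explicitly in the statement.
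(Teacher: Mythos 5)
Your overall strategy coincides with the paper's: differentiate the kinetic equation in $x$ and in $v$, perform an $L^q$-energy estimate exploiting $\nabla_v\cdot F = -d(1+\psi\star\rho_f)$, combine with the zeroth-order bound of Lemma \ref{f-growth}, and close via Gr\"onwall. However, there is a genuine gap in how you dispose of the alignment terms in the $x$-derivative equation. You claim they are ``bounded using $\psi\in\mc^2$ and $\psi_m,\psi_M$ finite together with mass conservation,'' but the relevant coefficient is
\[
\pa_{x_j}F = \pa_{x_j}\psi\star(\rho_f u_f) - v\,\pa_{x_j}\psi\star\rho_f + \pa_{x_j}u,
\]
and the middle term grows linearly in $|v|$; it is \emph{not} in $L^\infty(\T^d\times\R^d)$, so the source term $\pa_{x_j}F\cdot\nabla_v f$ cannot be estimated by a constant times $\|\nabla_v f\|_{L^q}$ using boundedness of $\nabla_x\psi$ and mass conservation alone. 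The paper resolves this by first establishing that the velocity support of $f$ remains uniformly bounded in time, $R_v(t)\le C_0$ (Lemma \ref{lem_supp}, which rests on Corollary \ref{main_cor} and hence on Proposition \ref{thm_main}); since $\nabla_v f$ and $\pa_{x_j}f$ vanish outside the support of $f$, the factor $|v|$ in the integrand is bounded by $C_0$ there, which is exactly what yields the bound \eqref{est_F}, namely $\|\pa_{x_j}F\|_{L^\infty}\le C(1+\|\nabla_x u\|_{L^\infty})$ on the region that matters. Without invoking this support estimate (or some substitute such as weighted norms), your differential inequality does not close.

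A secondary, smaller omission: the piece $\pa_{x_j}\psi\star(\rho_f u_f)$ is controlled by $\|\nabla_x\psi\|_{L^\infty}\|\rho_f u_f\|_{L^1}$, and bounding $\|\rho_f u_f\|_{L^1}\le M_0^{1/2}\bke{\inttr |v|^2 f\,dxdv}^{1/2}$ requires the kinetic-energy bound of Lemma \ref{lem_energy}, not merely conservation of mass. Both repairs are available under the hypotheses of Theorem \ref{main-thm} (bounded initial velocity support plus the decay results established earlier in the section), so the gap is fixable; but the missing ingredient is precisely what dictates where this lemma sits in the paper's chain of results, after Corollary \ref{main_cor} and Lemma \ref{lem_supp} rather than before them.
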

\begin{proof}

($\diamond$ Estimate of $\|\nabla_x f\|_{L^{q}}$): Taking $\pa_{x_j},j=1,\dots,d$ to the kinetic equation in \eqref{main} gives
\begin{align*}
\pa_t \pa_{x_j} f + v \cdot \nabla_x \pa_{x_j}f &= -\nabla_v \cdot\lt( (\pa_{x_j} F) f + F \pa_{x_j} f \rt)\cr
&= - (\nabla_v \cdot  \pa_{x_j} F)f -  \pa_{x_j} F \cdot \nabla_v f - (\nabla_v \cdot F) \pa_{x_j}f - F \cdot \nabla_v  \pa_{x_j} f.
\end{align*}
Note that
\[
\nabla_v \cdot F = -d(1 + \psi \star \rho_f),  \quad  \pa_{x_j} F =  \pa_{x_j} \psi \star (\rho_f u_f) - v  \pa_{x_j} \psi \star \rho_f +  \pa_{x_j} u,\quad
%\]
%and
%\[
\pa_{x_j} \nabla_v \cdot F = -d  \pa_{x_j} \psi \star \rho_f.
\]
Then we get
\begin{align*}
&\frac{d}{dt} \inttr | \pa_{x_j} f|^{q}\,dxdv
\quad = q \inttr | \pa_{x_j} f|^{q-2} \pa_{x_j} f  \pa_t \pa_{x_j} f\,dxdv\cr
&\quad =q \inttr | \pa_{x_j} f|^{q-2} \pa_{x_j} f  \lt( - (\nabla_v \cdot  \pa_{x_j} F)f -  \pa_{x_j} F \cdot \nabla_v f - (\nabla_v \cdot F) \pa_{x_j}f - F \cdot \nabla_v  \pa_{x_j} f\rt)dxdv
\cr
&\quad 
=: \sum_{i=1}^4 I_i.
\end{align*}
We estimate $I_i,i=1,\dots,4$ separately.
\begin{align*}
I_1 &= qd \inttr   | \pa_{x_j} f|^{q-2} \pa_{x_j} f (\pa_{x_j} \psi \star \rho_f) f\,dxdv
%\cr
%&\leq qd \|\pa_{x_j}\psi\|_{L^\infty} \inttr | \pa_{x_j} f|^{q-1} f\,dxdv \cr
%&
\leq qd \|\pa_{x_j}\psi\|_{L^\infty} \|\pa_{x_j} f\|_{L^{q}}^{q-1} \|f\|_{L^{q}},
\end{align*}
\begin{align*}
I_2 &= -q \inttr  | \pa_{x_j} f|^{q-2} \pa_{x_j} f \pa_{x_j} F \cdot \nabla_v f\,dxdv
%\cr
%&\leq q \|\pa_{x_j} F\|_{L^\infty} \inttr | \pa_{x_j} f|^{q-1}|\nabla_v f|\,dxdv\cr
%&
\leq qd \|\pa_{x_j} F\|_{L^\infty}  \|\pa_{x_j} f\|_{L^{q}}^{q-1} \|\nabla_v f\|_{L^{q}},
\end{align*}
\[
I_3 = qd \inttr |\pa_{x_j} f|^{q} (1 + \psi \star \rho_f)\,dxdv,
\]
%and
\begin{align*}
I_4 &= - \inttr F \cdot \nabla_v |\pa_{x_j}f|^{q}\,dxdv
%\cr
%&= \inttr (\nabla_v \cdot F) |\pa_{x_j}f|^{q}\,dxdv\cr
%&
= -d \inttr |\pa_{x_j} f|^{q} (1 + \psi \star \rho_f)\,dxdv.
\end{align*}
Adding up estimates, it follows  for $q\geq 1$ that
\begin{align*}
\frac{d}{dt} \|\pa_{x_j} f\|_{L^{q}}^{q}  &\leq qd \|\pa_{x_j}\psi\|_{L^\infty} \|\pa_{x_j} f\|_{L^{q}}^{q-1} \|f\|_{L^{q}} + qd \|\pa_{x_j} F\|_{L^\infty}  \|\pa_{x_j} f\|_{L^{q}}^{q-1} \|\nabla_v f\|_{L^{q}}\cr
&\quad + d(q-1) \inttr |\pa_{x_j} f|^{q} (1 + \psi \star \rho_f)\,dxdv\cr
&\leq qd \|\pa_{x_j}\psi\|_{L^\infty} \|\pa_{x_j} f\|_{L^{q}}^{q-1} \|f\|_{L^{q}} + qd \|\pa_{x_j} F\|_{L^\infty}  \|\pa_{x_j} f\|_{L^{q}}^{q-1} \|\nabla_v f\|_{L^{q}}\cr
&\quad + d(q-1) (1 + \|\psi\|_{L^\infty}) \|\pa_{x_j} f\|_{L^{q}}^{q},
\end{align*}
that is,
\begin{align}\label{est_f1}
\begin{aligned}
\frac{d}{dt} \|\pa_{x_j} f\|_{L^{q}} &\leq d \|\pa_{x_j}\psi\|_{L^\infty} \|f\|_{L^{q}} + d \|\pa_{x_j} F\|_{L^\infty}  \|\nabla_v f\|_{L^{q}} + d\lt(1-\frac{1}{q}\rt) (1 + \|\psi\|_{L^\infty}) \|\pa_{x_j} f\|_{L^{q}}.
\end{aligned}
\end{align}
On the other hand, it follows from Lemma \eqref{lem_supp} that
\begin{align}\label{est_F}
\begin{aligned}
\|\pa_{x_j} F\|_{L^\infty} &\leq \|\pa_{x_j} \psi \star (\rho_f u_f)\|_{L^\infty} +\| v  \pa_{x_j} \psi \star \rho_f\|_{L^\infty} +  \|\pa_{x_j} u\|_{L^\infty}\cr
&\leq \|\pa_{x_j} \psi\|_{L^\infty}\|\rho_f u_f\|_{L^1} + R_v(t)\|\pa_{x_j} \psi\|_{L^\infty} \|\rho_f\|_{L^1} + C\|\nabla_x u\|_{L^\infty}\cr
&\leq C(1  + \|\nabla_x u\|_{L^\infty}),
\end{aligned}
\end{align}
where $C > 0$ depends on $u_c(0)$, $v_c(0)$, $E_0$,  and $\psi$, but independent of $t$ and $q$. Here we also used
\[
\|\rho_fu_f\|_{L^1} \leq \inttr |v|f\,dxdv \leq \|\rho_f\|_{L^1}^{1/2}\lt(\inttr |v|^2f\,dxdv \rt)^{1/2} \leq E_0^{1/2},
\]
where
\[
E_0 := \inttr |v|^2f_0\,dxdv + \intt |u_0|^2\,dx.
\]
The estimate \eqref{est_F} together with \eqref{est_f1} asserts that
\begin{align}\label{est_xf}
\begin{aligned}
\frac{d}{dt} \|\nabla_x f\|_{L^{q}} &\leq Cd \|f\|_{L^{q}} + Cd(1 + \|\nabla_x u\|_{L^\infty})  \|\nabla_v f\|_{L^{q}} + Cd\lt(1-\frac{1}{q}\rt)\|\nabla_x f\|_{L^{q}},
\end{aligned}
\end{align}
where $C > 0$ depends on $E_0$, and $\psi$, but independent of $t$ and $q$. \newline

($\diamond$ Estimate of $\|\nabla_v f\|_{L^{q}}$): Differentiate the kinetic equation in \eqref{main} with respect to $v_j$, we obtain
\[
\pa_t \pa_{v_j} f + v \cdot \nabla_x \pa_{v_j} f = -\pa_{x_j} f - \nabla_v \cdot F \pa_{v_j} f - \pa_{v_j}F \cdot \nabla_v f - F\cdot \nabla_v \pa_{v_j} f,
\]
where we used $\pa_{v_j} \nabla_v \cdot F = 0$. Then we estimate
\begin{align*}
&\frac{d}{dt}\inttr |\pa_{v_j} f|^{q}\,dxdv
%\cr
%&\quad 
= q\inttr |\pa_{v_j} f|^{q-2} \pa_{v_j} f \pa_t \pa_{v_j}f\,dxdv \cr
&\quad = q\inttr |\pa_{v_j} f|^{q-2} \pa_{v_j} f \lt(-\pa_{x_j} f - \nabla_v \cdot F \pa_{v_j} f - \pa_{v_j}F \cdot \nabla_v f - F\cdot \nabla_v \pa_{v_j} f \rt)dxdv \cr
&\quad = -q\inttr |\pa_{v_j} f|^{q-2}\pa_{v_j} f \pa_{x_j} f\,dxdv + qd\inttr (1 + \psi \star \rho_f)|\pa_{v_j}f|^{q}\,dxdv \cr
&\qquad + q\inttr (1 + \psi \star \rho_f)|\pa_{v_j}f|^{q}\,dxdv - d\inttr (1 + \psi \star \rho_f)|\pa_{v_j}f|^{q}\,dxdv \cr
&\quad \leq q\inttr |\pa_{v_j} f|^{q-1}|\pa_{x_j} f|\,dxdv + (q(1+d)-d)\inttr (1 + \psi \star \rho_f)|\pa_{v_j}f|^{q}\,dxdv.
\end{align*}
Here we used
\[
\pa_{v_j} F \cdot \nabla_v f = -(1 + \psi \star\rho_f)\pa_{v_j}f.
\]
Thus for $q \geq d/(2(1+d))$ we find
\[
\frac{d}{dt}\|\pa_{v_j} f\|_{L^{q}}^{q} \leq q\|\pa_{v_j} f\|_{L^{q}}^{q-1}\|\pa_{x_j} f\|_{L^{q}}+ (q(1+d)-d)(1+\|\psi\|_{L^\infty})\|\pa_{v_j} f\|_{L^{q}}^{q}
\]
and, this subsequently implies
\[
\frac{d}{dt}\|\pa_{v_j} f\|_{L^{q}} \leq \|\pa_{x_j} f\|_{L^{q}} + \lt((1+d)-\frac{d}{q}\rt)(1+\|\psi\|_{L^\infty})\|\pa_{v_j} f\|_{L^{q}}.
\]
Hence we have
\bq\label{est_vf}
\frac{d}{dt}\|\nabla_v f\|_{L^{q}} \leq \|\nabla_x f\|_{L^{q}} + C\lt((1+d)-\frac{d}{q}\rt)\|\nabla_v f\|_{L^{q}},
\eq
where $C > 0$ depends on $\psi$, but independent of $t$ and $q$. We now combine \eqref{est_xf} and \eqref{est_vf} to have
\begin{align*}
&\frac{d}{dt}\lt(\|\nabla_x f\|_{L^{q}} + \|\nabla_v f\|_{L^{q}} \rt)\cr
&\quad \leq C\lt( 1+ d\lt(1-\frac{1}{q}\rt)\rt)\|\nabla_x f\|_{L^{q}} + C\lt(1 + d  + d\|\nabla_x u\|_{L^\infty} - \frac{d}{q}\rt)  \|\nabla_v f\|_{L^{q}} + Cd\|f\|_{L^{q}},
\end{align*}
where $C > 0$ depends on $E_0$,  and $\psi$, but independent of $t$, $d$, and $q$. We then combine this with \eqref{f} to obtain
\begin{align*}
&\frac{d}{dt}\lt(\|f\|_{L^{q}} + \|\nabla_x f\|_{L^{q}} + \|\nabla_v f\|_{L^{q}} \rt)\cr
&\quad \leq C\lt( 1+ d\lt(1-\frac{1}{q}\rt)\rt)\|\nabla_x f\|_{L^{q}} + C\lt(1 + d   + d\|\nabla_x u\|_{L^\infty} - \frac{d}{q}\rt)  \|\nabla_v f\|_{L^{q}} + Cd\lt(1 - \frac{1}{q} \rt)\|f\|_{L^{q}}\cr
&\quad \leq Cd(1 + \|\nabla_x u\|_{L^\infty})\lt(\|f\|_{L^{q}} + \|\nabla_x f\|_{L^{q}} + \|\nabla_v f\|_{L^{q}} \rt),
\end{align*}
where $C > 0$ depends on $u_c(0)$, $v_c(0)$, $E_0$, and $\psi$, but independent of $t$, $d$, and $q$.  This further yields
\[
\|f(\cdot,\cdot,t)\|_{W^{1,q}} \leq \|f_0\|_{W^{1,q}} \exp\lt( Cdt + Cd\int_0^t \|\nabla_x u\|_{L^\infty}\,d\tau\rt).
\]
This completes the proof.
\end{proof}

We also provide the growth estimate of $\|f(\cdot,\cdot,t)\|_{W^{2,q}}$ in the lemma below. Since its proof is quite lengthy and technical, we postpone it in Appendix \ref{app_a}.

\begin{lemma}\label{lem_f2}Let $T>0$ and suppose that $(f,u)$ is the strong solution  defined in Definition \ref{strong-sol} to the system \eqref{main}-\eqref{initial}. Under the same assumptions as in Theorem  \ref{main-thm},  we have
\[
\|f(\cdot,\cdot,t)\|_{W^{2,q}} \leq \|f_0\|_{W^{2,q}}  \exp\lt( Ct + \int_0^t\|\nabla_x u(\cdot,\tau)\|_{L^\infty}^2 + \|\nabla_x^2 u(\cdot,\tau)\|_{L^{q}}\,d\tau  \rt)
\]
for $q > d$, where $C>0$ is independent of $t$.
\end{lemma}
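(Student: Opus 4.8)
The plan is to follow the scheme of Lemma \ref{lem_f}, now at the level of second-order phase-space derivatives. Writing $\pa^\alpha$ for a generic derivative $\pa_{x_ix_j}$, $\pa_{x_iv_j}$, or $\pa_{v_iv_j}$ with $|\alpha|=2$, I would differentiate the kinetic equation in \eqref{main} twice, multiply by $|\pa^\alpha f|^{q-2}\pa^\alpha f$, and integrate over $\T^d \times \R^d$. The transport term contributes nothing, since $v\cdot\nabla_x$ commutes with $\pa^\alpha$ and is divergence free in $x$, while the top-order force term $F\cdot\nabla_v\pa^\alpha f$ is integrated by parts to produce $\frac{q-1}{q}\int(\nabla_v\cdot F)|\pa^\alpha f|^q$, which is harmless because $\nabla_v\cdot F=-d(1+\psi\star\rho_f)$ is bounded via \eqref{april02-10}. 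The remaining terms come from the Leibniz expansion and pair derivatives of $F$ up to second order with derivatives of $f$ of order at most two.

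To control these commutator terms I would reuse the explicit formulas for the derivatives of $F$ recorded in the proof of Lemma \ref{lem_f}, together with $\pa_{v_iv_j}F=0$ and the boundedness of $\pa_{x_iv_j}F$ and $\pa_{x_i}(\nabla_v\cdot F)$ (which involve only $\psi$ and $\rho_f$). The key point is that every first-order spatial derivative $\pa_{x_i}F$ is bounded in $L^\infty$ by $C(1+\norm{\nabla_x u}_{L^\infty})$, thanks to \eqref{est_F} and the uniform $v$-support bound from Lemma \ref{lem_supp}. Hence all terms in which at most one derivative falls on the fluid velocity are estimated by $C(1+\norm{\nabla_x u}_{L^\infty})\norm{f}_{W^{2,q}}\norm{\pa^\alpha f}_{L^q}^{q-1}$, and the mixed and pure velocity derivatives $\pa_{x_iv_j}f$, $\pa_{v_iv_j}f$ close against lower-order quantities exactly as in Lemma \ref{lem_f}, introducing no derivatives of $u$ beyond first order.

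The one genuinely new and hardest term appears in the estimate for the pure spatial derivative $\pa_{x_ix_j}f$: the contribution in which both spatial derivatives fall on $F$, producing $\pa_{x_ix_j}u\cdot\nabla_v f$. Since $\nabla_x^2 u$ is controlled only in $L^q(\T^d)$ and not in $L^\infty$, a direct H\"older estimate against $|\pa^\alpha f|^{q-1}\in L^{q'}$ would force $\nabla_v f$ into $L^\infty$. I would resolve this by exploiting that, by Lemma \ref{lem_supp}, $f(\cdot,\cdot,t)$ has uniformly bounded support in $v$, so that the Sobolev embedding on the bounded $v$-domain (available for $q>d$) gives $\norm{\nabla_v f}_{L^\infty}\le C\norm{f}_{W^{2,q}}$. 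With this bound,
\[
\inttr |\pa_{x_ix_j}u|\,|\nabla_v f|\,|\pa_{x_ix_j}f|^{q-1}\,dxdv \le C\norm{\nabla_v f}_{L^\infty}\norm{\nabla_x^2 u}_{L^q}\norm{\pa_{x_ix_j}f}_{L^q}^{q-1} \le C\norm{\nabla_x^2 u}_{L^q}\norm{f}_{W^{2,q}}\norm{\pa_{x_ix_j}f}_{L^q}^{q-1},
\]
so that $\norm{\nabla_x^2 u}_{L^q}$ enters linearly. The quadratic factor $\norm{\nabla_x u}_{L^\infty}^2$ is produced by a Young's inequality used to absorb the cross terms carrying a single spatial derivative of $u$ against an intermediate derivative of $f$.

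Finally, I would collect the estimates for all second-order multi-indices into a coupled system of differential inequalities, add the $W^{1,q}$ and $L^q$ bounds from Lemma \ref{lem_f} and Lemma \ref{f-growth}, and sum to obtain a closed inequality of the form $\frac{d}{dt}\norm{f}_{W^{2,q}}\le C(1+\norm{\nabla_x u}_{L^\infty}^2+\norm{\nabla_x^2 u}_{L^q})\norm{f}_{W^{2,q}}$, after which Gr\"onwall's lemma yields the claimed bound. The main obstacle is precisely the $\pa_{x_ix_j}u\cdot\nabla_v f$ term, whose treatment hinges on converting the compactness of the velocity support into an $L^\infty$ control of $\nabla_v f$ through the $q>d$ Sobolev embedding; all other terms are routine adaptations of the first-order argument in Lemma \ref{lem_f}.
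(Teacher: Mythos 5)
Your proposal is correct and follows essentially the same route as the paper's proof (Appendix B): the same weighted $L^q$ energy estimates for each second-order derivative $\pa_{x_i}\pa_{x_j}f$, $\pa_{x_i}\pa_{v_j}f$, $\pa_{v_i}\pa_{v_j}f$, the same identification of $\pa_{x_i}\pa_{x_j}u\cdot\nabla_v f$ as the only genuinely hard term, and the same resolution via the uniform velocity-support bound of Lemma \ref{lem_supp} together with the $q>d$ Morrey/Sobolev embedding $\|\nabla_v f\|_{L^\infty}\leq C_q\|\nabla_v f\|_{W^{1,q}}$ so that $\|\nabla_x^2 u\|_{L^q}$ enters linearly, followed by summing the coupled differential inequalities with the lower-order bounds of Lemmas \ref{f-growth} and \ref{lem_f} and applying Gr\"onwall's lemma. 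Your accounting of where the quadratic factor $\|\nabla_x u\|_{L^\infty}^2$ arises (Young's inequality on the cross terms carrying $\pa_{x_i}F$) also matches the paper.
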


%%%%%%%%%%%%%%%%%%%%%%%%%%%%%%%%%%%%%%%%%%%%%%%%
%
%
%
%
%
% \subsection{Two dimensional case}
%
%
%
%
%%%%%%%%%%%%%%%%%%%%%%%%%%%%%%%%%%%%%%%%%%%%%%%%

\subsubsection{Two dimensional case}
In this part, we consider the two dimensional case to obtain  upper bounds of growth rates for
derivatives of $f$.
For convenience, we denote the vorticity field by $\omega = \nabla_x \times u = \pa_{x_1} u_2 - \pa_{x_2} u_1$.
We start by providing some upper bound estimates for $u$ and a decay estimate of $\omega$ in the lemma below.

\begin{lemma}\label{lem_regu}Let $d=2$. Suppose that $(f,u)$ is the strong solution  defined in Definition \ref{strong-sol} to the system \eqref{main}-\eqref{initial}. Under the same assumptions as in Theorem  \ref{main-thm}, we have
\[
\|u\|_{L^2(0,T;H^2)} \leq C\lt(1 + \sqrt{T}\rt),\qquad
\|\omega (\cdot, t)\|_{L^2} \leq Ce^{-Ct},
\]
and
\[
\sup_{t \geq 0}\|\nabla_x u(\cdot,t)\|_{L^2}+\|\pa_t u\|_{L^2(0,T;L^2)}\leq C,
\]
where $C > 0$ is independent of $t$.
\end{lemma}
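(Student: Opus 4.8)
The plan is to combine the exponential-decay facts already proved for the fluid---the $L^2$-decay of the vorticity and the $L^\infty$-decay of $u-u_c$ in Lemma \ref{lemma-fluid-decay}, and the $L^2$-decay of the drag force $G(x,t):=-\int_{\R^d}(u-v)f\,dv$ in Lemma \ref{lem_drag2}---with the standard enstrophy and time-derivative energy estimates for the two-dimensional Navier--Stokes system. Two elementary identities on the torus will be used repeatedly: Korn's identity $\norm{\nabla_x u}_{L^2}=\norm{\omega}_{L^2}$ for divergence-free fields (as in Lemma \ref{lemma-fluid-decay}), and the elliptic relation between $\Delta_x u$ and $\nabla_x\omega$, which gives $\norm{\nabla_x^2 u}_{L^2}\le C\norm{\nabla_x\omega}_{L^2}$ and hence $\norm{u}_{H^2}\le C\bke{\norm{u}_{L^2}+\norm{\omega}_{L^2}+\norm{\nabla_x\omega}_{L^2}}$. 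The bound $\norm{\omega(\cdot,t)}_{L^2}\le Ce^{-Ct}$ is exactly the $L^2$ assertion of Lemma \ref{lemma-fluid-decay}, and together with Korn's identity it already yields $\sup_{t\ge0}\norm{\nabla_x u(\cdot,t)}_{L^2}\le C$. Moreover, since $u_c(t)\to v^\infty$ is bounded by Remark \ref{rmk_limit} and $\norm{u-u_c}_{L^\infty}\le Ce^{-Ct}$, I record the uniform bounds $\norm{u(\cdot,t)}_{L^2}\le C$ and $\norm{u(\cdot,t)}_{L^\infty}\le C$.

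For the enstrophy estimate I test the vorticity equation $\pa_t\omega+u\cdot\nabla_x\omega-\Delta_x\omega=\nabla_x\times G$ with $\omega$. The convective term vanishes by incompressibility, and Young's inequality applied to $\int_{\T^2}(\nabla_x\times G)\,\omega\,dx\le\norm{G}_{L^2}\norm{\nabla_x\omega}_{L^2}$ gives
\[
\frac{d}{dt}\norm{\omega}_{L^2}^2+\norm{\nabla_x\omega}_{L^2}^2\le\norm{G}_{L^2}^2\le Ce^{-Ct},
\]
where the last bound is Lemma \ref{lem_drag2}. Integrating in time and using $\omega_0\in L^2$ produces $\int_0^\infty\norm{\nabla_x\omega}_{L^2}^2\,dt\le C$, with $C$ independent of $T$.

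The $H^2$-in-time bound then follows by integrating $\norm{u}_{H^2}^2\le C\bke{\norm{u}_{L^2}^2+\norm{\omega}_{L^2}^2+\norm{\nabla_x\omega}_{L^2}^2}$ over $[0,T]$: the last two time integrals are bounded uniformly in $T$, while $\int_0^T\norm{u}_{L^2}^2\,dt\le CT$ because $\norm{u}_{L^2}$ is merely bounded (it settles to the nonzero constant $v^\infty$ rather than decaying), so $\norm{u}_{L^2(0,T;H^2)}^2\le C(1+T)$, which is the claimed $C(1+\sqrt{T})$. For the time derivative I test the momentum equation with $\pa_t u$; incompressibility kills the pressure term and the viscous term becomes $\tfrac12\tfrac{d}{dt}\norm{\nabla_x u}_{L^2}^2$, yielding
\[
\norm{\pa_t u}_{L^2}^2+\frac{d}{dt}\norm{\nabla_x u}_{L^2}^2\le C\bke{\norm{G}_{L^2}^2+\norm{(u\cdot\nabla_x)u}_{L^2}^2}.
\]
The nonlinear term is controlled by $\norm{(u\cdot\nabla_x)u}_{L^2}\le\norm{u}_{L^\infty}\norm{\nabla_x u}_{L^2}\le Ce^{-Ct}$, using the uniform $L^\infty$-bound on $u$ and $\norm{\nabla_x u}_{L^2}=\norm{\omega}_{L^2}\le Ce^{-Ct}$. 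Since both $\norm{G}_{L^2}^2$ and $\norm{(u\cdot\nabla_x)u}_{L^2}^2$ are integrable in time uniformly in $T$ and $\nabla_x u_0\in L^2$, integrating over $[0,T]$ gives $\norm{\pa_t u}_{L^2(0,T;L^2)}\le C$ with $C$ independent of $T$.

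The main obstacle is the quadratic convective nonlinearity $(u\cdot\nabla_x)u$, which is absent in the three-dimensional Stokes case; here it is handled cleanly only because the previously established $L^\infty$-decay of $u-u_c$ supplies a uniform $L^\infty$-bound on $u$, so the nonlinear term inherits the exponential decay of $\norm{\nabla_x u}_{L^2}$. The remaining point requiring care is bookkeeping on which estimates are genuinely $T$-independent: the enstrophy and $\pa_t u$ bounds are, whereas the $H^2$-in-time bound legitimately grows like $\sqrt{T}$, precisely because $\norm{u}_{L^2}$ does not decay but converges to the constant velocity $v^\infty$.
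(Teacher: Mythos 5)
Your proof is correct and follows essentially the same route as the paper: the enstrophy estimate for the vorticity equation with the drag-force decay of Lemma \ref{lem_drag2}, the elliptic relation $\|u\|_{H^2}\le C(\|u\|_{L^2}+\|\omega\|_{H^1})$ for the $L^2(0,T;H^2)$ bound, and testing the momentum equation with $\pa_t u$ using the uniform $L^\infty$-bound on $u$ to absorb the convective term. The only (harmless) differences are that you import the exponential decay of $\|\omega\|_{L^2}$ from Lemma \ref{lemma-fluid-decay} and get $\sup_t\|\nabla_x u\|_{L^2}$ via Korn's identity, whereas the paper re-derives the $\omega$-decay at the end via Poincar\'e and Gr\"onwall and extracts the gradient bound from the $\pa_t u$ estimate.
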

\begin{proof} Let us reformulate the fluid equation in \eqref{main} as the equation for vorticity $\omega$:
\[
\pa_t \omega + (u\cdot \nabla_x) \omega- \Delta_x \omega = -\nabla_x \times \int_{\R^2} (u-v)f\,dv = -\nabla_x \times (\rho_f (u-u_f)).
\]
Multiplying the above equation by $\omega$ and integrating the resulting equation over $\T^2$ gives
\[
\frac12\frac{d}{dt} \|\omega\|_{L^2}^2 + \|\nabla_x \omega\|_{L^2}^2 \leq \int_{\T^3} |\nabla_x \omega| |\rho_f (u-u_f) |\, dx \leq  \frac12\|\nabla_x \omega\|_{L^2}^2 + \frac12 \|\rho_f (u-u_f)\|_{L^2}^2,
\]
i.e.,
\bq\label{est_vor}
\frac{d}{dt} \|\omega\|_{L^2}^2 + \|\nabla_x \omega\|_{L^2}^2 \leq \|\rho_f (u-u_f)\|_{L^2}^2.
\eq
This together with Lemma \ref{lem_drag} asserts
\[
\sup_{t \geq 0}\|\omega(\cdot,t)\|_{L^2}^2 + \int_0^\infty \|\nabla_x \omega(\cdot,\tau)\|_{L^2}^2\,d\tau \leq C,
\]
where $C > 0$ is independent of $t$. On the other hand, the following inequality holds
\[
\|u\|_{L^2(0,t;H^2)}^2 \leq C\|\omega\|_{L^2(0,t;H^1)}^2 + \|u\|_{L^2(0,t;L^2)}^2 \leq C(1 + t).
\]
We further estimate, due to Lemma \ref{lem_drag},
\begin{align*}
&\int_{\T^2} |\pa_t u|^2\,dx + \frac{d}{dt}\int_{\T^2}|\nabla_x u|^2\,dx \cr
&\quad \leq \int_{\T^2} |u|^2 |\nabla_x u|^2\,dx + \int_{\T^2} |\rho_f (u - u_f)|^2\,dx \leq \|u\|_{L^\infty}^2\|\nabla_x u\|_{L^2}^2 + Ce^{-Ct}
\leq C\|\nabla_x u\|_{L^2}^2 + Ce^{-Ct}.
\end{align*}
Integrating the above inequality with respect to time, we get
\[
\sup_{t \geq 0}\|\nabla_x u(\cdot,t)\|_{L^2}^2 + \int_0^\infty \|\pa_t u(\cdot,\tau)\|_{L^2}^2\,d\tau \leq C,
\]
where $C > 0$ is independent of $t$. If we apply the Poincar\'e inequality to \eqref{est_vor} and use Lemma \ref{lem_drag}, then we have
\[
\frac{d}{dt} \|\omega(\cdot,t)\|_{L^2}^2 + C\|\omega(\cdot,t)\|_{L^2}^2  \leq Ce^{-Ct},
\]
which implies that
\[
\|\omega(\cdot,t)\|_{L^2} \leq Ce^{-Ct},
\]
where $C>0$ is independent of $t$. This completes the proof.
\end{proof}

Next, we present upper bounds of estimates of $f$ in Sobolev spaces.

%For notational simplicity, let us use the notation $L^{p,q}_{x,t} := L^q(0,t;L^p(\T^2))$.

\begin{proposition}\label{prop_fw1}Let $d=2$. Suppose tha $(f,u)$ is the strong solution  defined in Definition \ref{strong-sol} to the system \eqref{main}-\eqref{initial}. Under the same assumptions as in Theorem  \ref{main-thm},  we have
\[
\|f(\cdot,\cdot,t)\|_{W^{1,q}} \leq \|f_0\|_{W^{1,q}} e^{C(1+t)}
\]
and
\[
\|f(\cdot,\cdot,t)\|_{W^{2,q}} \leq \|f_0\|_{W^{2,q}} e^{C(1+t)}
\]
for $ q>d$, where $C>0$ is independent of $t$.
\end{proposition}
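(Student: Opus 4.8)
The plan is to reduce both bounds to a single linear-in-time growth estimate for the fluid velocity and then invoke Lemmas \ref{lem_f} and \ref{lem_f2}. Those lemmas give
\[
\|f(\cdot,\cdot,t)\|_{W^{1,q}} \leq \|f_0\|_{W^{1,q}}\exp\lt(Ct + C\int_0^t\|\nabla_x u\|_{L^\infty}\,d\tau\rt)
\]
together with the analogous bound for $\|f\|_{W^{2,q}}$ carrying $\int_0^t (\|\nabla_x u\|_{L^\infty}^2 + \|\nabla_x^2 u\|_{L^q})\,d\tau$ in the exponent. Since $W^{2,q}(\T^2) \hookrightarrow W^{1,\infty}(\T^2)$ for $q>2=d$, one has $\|\nabla_x u\|_{L^\infty} \leq C(\|u\|_{L^q} + \|\nabla_x^2 u\|_{L^q})$, and by the Cauchy--Schwarz inequality in time all the integrals appearing in the exponents are controlled once we establish the single estimate
\[
\int_0^t \|\nabla_x^2 u(\cdot,\tau)\|_{L^q}^2\,d\tau \leq C(1+t), \qquad q > 2 .
\]
Thus $Ct + C\int_0^t\|\nabla_x u\|_{L^\infty}\,d\tau \leq C(1+t)$ and similarly for the $W^{2,q}$ exponent, which yields exactly the claimed $e^{C(1+t)}$ growth with a \emph{finite, $t$-independent} constant $C$.

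To prove the displayed estimate I would read the two-dimensional fluid equation in velocity form, $\pa_t u + (u\cdot\nabla_x)u - \Delta_x u + \nabla_x p = G$ with $G := -\int_{\R^2}(u-v)f\,dv$, and apply maximal $L^2_t L^q_x$ regularity for the Stokes operator on $\T^2$. Working on mean-zero fields the Stokes semigroup has a spectral gap, so the maximal-regularity constant is uniform in $t$ and
\[
\int_0^t \|\nabla_x^2 u\|_{L^q}^2\,d\tau \leq C\|u_0\|_{W^{2,q}}^2 + C\int_0^t \|G\|_{L^q}^2\,d\tau + C\int_0^t \|(u\cdot\nabla_x)u\|_{L^q}^2\,d\tau .
\]
The crucial structural feature is that the forcing entering the \emph{velocity} equation is $G$ itself, carrying no spatial derivative; by Lemma \ref{lem_drag} it decays, $\|G\|_{L^q} \leq Ce^{-Ct}$, so $\int_0^t\|G\|_{L^q}^2\,d\tau \leq C$.

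It remains to bound the convective term. The key auxiliary input is the uniform-in-time bound $\|u(\cdot,t)\|_{L^\infty} \leq |u_c(t)| + \|u - u_c\|_{L^\infty} \leq C$, which follows from Lemma \ref{lemma-fluid-decay} (exponential decay of $u-u_c$ in $L^\infty$) and the boundedness of $u_c(t)$ noted in Remark \ref{rmk_limit}. Hence $\|(u\cdot\nabla_x)u\|_{L^q} \leq C\|\nabla_x u\|_{L^q} \leq C\|u\|_{H^2}$ by the two-dimensional embedding $H^2 \hookrightarrow W^{1,q}$ ($q<\infty$), and by Lemma \ref{lem_regu} we get $\int_0^t \|(u\cdot\nabla_x)u\|_{L^q}^2\,d\tau \leq C\int_0^t\|u\|_{H^2}^2\,d\tau \leq C(1+t)$. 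Combining the three contributions yields $\int_0^t\|\nabla_x^2 u\|_{L^q}^2\,d\tau \leq C(1+t)$, and feeding this back through the reduction into Lemmas \ref{lem_f} and \ref{lem_f2} produces the two asserted estimates.

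The main obstacle is the quadratic convection term $(u\cdot\nabla_x)u$, present only in the two-dimensional Navier--Stokes case (the three-dimensional fluid being the linear Stokes system); its treatment hinges on the uniform $L^\infty$-bound for $u$, which itself rests on the exponential vorticity decay of Lemma \ref{lemma-fluid-decay} together with the $L^2_tH^2$ bound of Lemma \ref{lem_regu}. A secondary point requiring care is that the maximal-regularity constant must be independent of $t$, which is guaranteed by the spectral gap of the Stokes operator on mean-zero fields over the torus, allowing the estimate to be run globally in time. I would emphasize that, unlike a naive vorticity-form energy estimate---where the forcing would appear as $\nabla_x\times G$ and one would be forced to control $\nabla_x G$, thereby coupling the fluid regularity back to the growing quantity $\|\nabla_x f\|_{L^q}$---the velocity-form maximal-regularity argument decouples the fluid estimate from the derivatives of $f$, and this decoupling is precisely what keeps the growth exponent linear in $t$.
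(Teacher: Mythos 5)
Your proposal is correct and follows essentially the same route as the paper: reduce to fluid bounds via Lemmas \ref{lem_f} and \ref{lem_f2}, then control $\int_0^t\|\nabla_x^2 u\|_{L^q}^2\,d\tau \le C(1+t)$ by maximal $L^2_t L^q_x$ (Giga--Sohr) regularity for the Stokes problem in velocity form, with the drag decaying by Lemma \ref{lem_drag} and the convective term absorbed through the $L^2_t H^2_x$ bound of Lemma \ref{lem_regu}. The only cosmetic differences are that the paper runs the maximal-regularity estimate twice (first in $L^3$ to get $\int_0^t\|\nabla_x u\|_{L^\infty}^2\,d\tau\le C(1+t)$ via Sobolev interpolation, then in $L^q$), whereas you derive all exponents from the single $L^q$ bound plus the embedding $W^{2,q}(\T^2)\hookrightarrow W^{1,\infty}(\T^2)$, and the paper estimates $(u\cdot\nabla_x)u$ by Gagliardo--Nirenberg interpolation rather than by your uniform $\|u\|_{L^\infty}$ bound from Lemma \ref{lemma-fluid-decay} and Remark \ref{rmk_limit}.
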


\begin{proof} We recall from Lemma \ref{lem_f} that
\[
\|f(\cdot,\cdot,t)\|_{W^{1,q}} \leq \|f_0\|_{W^{1,q}} \exp\lt( Ct + C\int_0^t \|\nabla_x u(\cdot,\tau)\|_{L^\infty}\,d\tau\rt).
\]
In the rest of the proof, we will show that
\bq\label{est_ul}
\int_0^t \|\nabla_x u(\cdot,\tau)\|_{L^\infty}^2\,d\tau \leq C(1 + t^{\frac{1}{3}}) \leq C(1 + t),
\eq
which leads to
\begin{equation}\label{nabla-u-2D}
\int_0^t \|\nabla_x u(\cdot,\tau)\|_{L^\infty}\,d\tau\le \sqrt{t}\norm{u}_{L^2(0,t;L^\infty)}\le C(1+t).
\end{equation}

For this, we use the Stokes estimate of Giga-Sohr \cite{GS91} to get
\bq\label{est_ud2}
\|\pa_t u\|_{L^2(0,t;L^3)} + \|\Delta_x u\|_{L^2(0,t;L^3)} \leq \|u_0\|_{D_3^{1/2,2}} + C\|(u \cdot \nabla_x)u\|_{L^2(0,t;L^3)} + C\|\rho_f(u - u_f)\|_{L^2(0,t;L^3)},
\eq
where we can estimate
\[
\|u_0\|_{D_3^{1/2,2}} \leq C\|u_0\|_{\mathcal{V} \cap H^{4/3}}.
\]
Here $D^{1/2,2}_3$ denotes the Besov-type space given by $D^{1/2,2}_3 = B^1_{3,2}(\T^2) \cap L^3_\sigma(\T^2)$ and we used the standard embedding theorem, $\mathcal{V} \cap H^{4/3} \subset D^{1/2,2}_3$. For the estimate of the second term on the right hand side of \eqref{est_ud2}, we use the Gagliardo-Nirenberg-Sobolev interpolation inequality to find
\begin{align*}
\|(u \cdot \nabla_x)u\|_{L^3} &\leq \|u\|_{L^\infty}\|\nabla_x u\|_{L^3} \leq C\|\nabla_x u\|_{L^2}^{2/3}\|\Delta_x u\|_{L^2}^{1/3} + C\|\nabla_x u\|_{L^2}.
\end{align*}
This with Lemma \ref{lem_regu} and total energy estimate in Lemma \ref{lem_energy} gives
\begin{align*}
\|(u \cdot \nabla_x)u\|_{L^2(0,t;L^3)}^2  &\leq C\int_0^t \|\nabla_x u\|_{L^2}^{4/3}\|\Delta_x u\|_{L^2}^{2/3}\,d\tau + C\int_0^t \|\nabla_x u\|_{L^2}^2\,d\tau\cr
&\leq C\lt(\int_0^t \|\nabla_x u\|_{L^2}^2\,d\tau\rt)^{2/3}\lt(\int_0^t \|\Delta_x u\|_{L^2}^2\,d\tau \rt)^{1/3} + C\int_0^t \|\nabla_x u\|_{L^2}^2\,d\tau\cr
&\leq C\lt(1 + t\rt).
\end{align*}
Moreover, we find from Lemma \ref{lem_drag} that
\[
\|\rho_f(u - u_f)\|_{L^2(0,t;L^3)} \leq C,
\]
where $C>0$ is independent of $t$. This asserts
\[
\|\Delta_x u\|_{L^2(0,t;L^3)}^2 \leq C\lt(1 + t\rt).
\]
We then use the following Sobolev interpolation inequality
\[
\|\nabla_x u\|_{L^\infty}^2 \leq C\|\Delta_x u\|_{L^3}^{3/2}\|\nabla_x u\|_{L^2}^{1/2} +C\|\nabla_x u\|_{L^2}^2\leq C\|\Delta_x u\|_{L^3}^2 + C\|\nabla_x u\|_{L^2}^2
\]
to have
\[
\int_0^t \|\nabla_x u\|_{L^\infty}^2\,d\tau \leq C\int_0^t \|\Delta_x u\|_{L^3}^2\,d\tau + C\int_0^t \|\nabla_x u\|_{L^2}^2\,d\tau \leq C\lt(1 + t\rt).
\]
On the other hand, it follows from Lemma \ref{lem_f2}  that
\[
\|f(\cdot,\cdot,t)\|_{W^{2,q}} \leq \|f_0\|_{W^{2,q}}  \exp\lt( Ct + \int_0^t \|\nabla_x^2 u(\cdot,\tau)\|_{L^{q}}\,d\tau  \rt)
\]
for $q > d$, where $C>0$ is independent of $t$. Thus it suffices to show that
\[
\int_0^t \|\nabla_x^2 u(\cdot,\tau)\|_{L^{q}}^2\,d\tau \leq C(1 + t)
\]
for some $C>0$ which is independent of $t$. Similarly as before, we estimate
\[
\|\nabla_x^2 u\|_{L^2(0,t;L^{q})} \leq \|u_0\|_{D^{1/2, 2}_{q}} + C\|(u \cdot \nabla_x)u\|_{L^2(0,t;L^{q})} + C\|\rho_f(u - u_f)\|_{L^2(0,t;L^{q})},
\]
where $D^{1/2,2}_q= B^1_{q,2}(\T^2) \cap L^q_\sigma(\T^2)$.
Since
$\|\rho_f(u - u_f)\|_{L^2(0,t;L^{q})} \leq C$,
we show, due to Lemma \ref{lem_drag},  that
\[
\|(u \cdot \nabla_x)u\|_{L^2(0,t;L^{q})}  \leq C(1 + t).
\]
We use the Gagliardo-Nirenberg-Sobolev interpolation inequality to find
\[
\|(u \cdot \nabla_x)u\|_{L^{q}} \leq \|u\|_{L^\infty}\|\nabla_x u\|_{L^{q}} \leq C\lt(\|\nabla_x u\|_{L^2}^{1-\alpha}\|\nabla_x^2 u\|_{L^2}^\alpha + C\|\nabla_x u\|_{L^2} \rt),
\]
where $\alpha = (1/2)(1 - 1/q)$. Thus we have
\begin{align*}
\|(u \cdot \nabla_x)u\|_{L^2(0,t;L^{q})}^2  &\leq C\int_0^t \lt(\|\nabla_x u\|_{L^2}^{2(1-\alpha)}\|\nabla_x^2 u\|_{L^2}^{2\alpha} + C\|\nabla_x u\|_{L^2}^2 \rt) d\tau\cr
&\leq C\int_0^t \lt(\|\nabla_x u\|_{L^2}^2 + \|\nabla_x^2 u\|_{L^2}^2 \rt) d\tau\cr
&\leq C(1 + t).
\end{align*}
This completes the proof.
\end{proof}

%%%%%%%%%%%%%%%%%%%%%%%%%%%%%%%%%%%%%%%%%%%%%%%%
%
%
%
%
%
% \subsection{Three dimensional case}
%
%
%
%
%%%%%%%%%%%%%%%%%%%%%%%%%%%%%%%%%%%%%%%%%%%%%%%%

\subsubsection{Three dimensional case}
In this part, we consider the three dimensional case, and in this case, the fluid system in \eqref{main} becomes the Stokes system.

\begin{proposition}\label{prop_fw1_d3}Let $d=3$. Suppose that $(f,u)$ is the strong solution  defined in Definition \ref{strong-sol} to the system \eqref{main}-\eqref{initial}. Under the same assumptions as in Theorem  \ref{main-thm},  we have
\[
\|f(\cdot,\cdot,t)\|_{W^{1,q}} \leq \|f_0\|_{W^{1,q}} \,e^{C(1+t)}
\]
and
\[
\|f(\cdot,\cdot,t)\|_{W^{2,q}} \leq \|f_0\|_{W^{2,q}} \,e^{C(1+t)}
\]
for $q > 3$, where $C>0$ is independent of $t$.
\end{proposition}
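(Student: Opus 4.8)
The plan is to reduce the statement to the growth estimates already established in Lemma \ref{lem_f} and Lemma \ref{lem_f2}; it then suffices to show that each of
\[
\int_0^t \|\nabla_x u(\cdot,\tau)\|_{L^\infty}\,d\tau, \qquad \int_0^t \|\nabla_x u(\cdot,\tau)\|_{L^\infty}^2\,d\tau, \qquad \int_0^t \|\nabla_x^2 u(\cdot,\tau)\|_{L^{q}}\,d\tau
\]
is bounded by $C(1+t)$ with $C$ independent of $t$. The structural simplification compared with the two dimensional case is that for $d=3$ the convective term in \eqref{main} is switched off by $(1-\delta_{d,3})=0$, so the fluid part is the \emph{linear} Stokes system $\partial_t u - \Delta_x u + \nabla_x p = G$, $\nabla_x\cdot u = 0$, with $G := -\int_{\R^3}(u-v)f\,dv$. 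By Lemma \ref{lem_drag} we have $\|G(\cdot,t)\|_{L^{q}} \le Ce^{-Ct}$ for $2 \le q \le \infty$, whence $\|G\|_{L^2(0,t;L^{q})} \le C$ uniformly in $t$.

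First I would invoke the Giga--Sohr maximal regularity estimate \cite{GS91} for the Stokes system, exactly as in \eqref{est_ud2} but now with the quadratic term absent: taking time-integrability exponent $2$,
\[
\|\partial_t u\|_{L^2(0,t;L^{q})} + \|\nabla_x^2 u\|_{L^2(0,t;L^{q})} \le C\|u_0\|_{D^{1/2,2}_{q}} + C\|G\|_{L^2(0,t;L^{q})} \le C,
\]
where $\|u_0\|_{D^{1/2,2}_{q}} \le C\|u_0\|_{W^{2,q}}$ is finite by hypothesis and the right-hand side is uniform in $t$. Cauchy--Schwarz in time then gives $\int_0^t \|\nabla_x^2 u\|_{L^{q}}\,d\tau \le \sqrt{t}\,\|\nabla_x^2 u\|_{L^2(0,t;L^{q})} \le C\sqrt{t} \le C(1+t)$, which is the estimate feeding Lemma \ref{lem_f2}.

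To control $\|\nabla_x u\|_{L^\infty}$ I would interpolate this $L^2_tL^{q}_x$ bound on $\nabla_x^2 u$ against the exponential decay of $\nabla_x u$ in $L^2$. Since $\|\nabla_x u\|_{L^2}=\|\omega\|_{L^2}\le Ce^{-Ct}$ by the divergence-free Fourier identity and Lemma \ref{lemma-fluid-decay}, and since for $q>3$ the Gagliardo--Nirenberg inequality yields
\[
\|\nabla_x u\|_{L^\infty} \le C\|\nabla_x^2 u\|_{L^{q}}^{\theta}\,\|\nabla_x u\|_{L^2}^{1-\theta} + C\|\nabla_x u\|_{L^2}, \qquad \theta = \left(\frac{5}{3}-\frac{2}{q}\right)^{-1}\in\left(\frac35,1\right),
\]
I can bound $\|\nabla_x u\|_{L^\infty}^2 \le C\|\nabla_x^2 u\|_{L^{q}}^{2\theta}e^{-C(1-\theta)\tau} + Ce^{-C\tau}$ and apply H\"older in time with exponents $1/\theta$ and $1/(1-\theta)$ to obtain $\int_0^t\|\nabla_x u\|_{L^\infty}^2\,d\tau \le C$ uniformly in $t$; a further Cauchy--Schwarz gives $\int_0^t\|\nabla_x u\|_{L^\infty}\,d\tau \le C\sqrt{t}$. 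Inserting these three integral bounds into Lemma \ref{lem_f} and Lemma \ref{lem_f2} produces the claimed bounds $\|f(\cdot,\cdot,t)\|_{W^{1,q}}\le\|f_0\|_{W^{1,q}}e^{C(1+t)}$ and $\|f(\cdot,\cdot,t)\|_{W^{2,q}}\le\|f_0\|_{W^{2,q}}e^{C(1+t)}$ with $C$ independent of $t$.

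I do not anticipate a serious obstacle, precisely because the disappearance of the nonlinear transport term makes the Stokes regularity theory directly applicable; the only points demanding care are the validity of the Giga--Sohr maximal regularity on the torus $\T^3$ together with the embedding $W^{2,q}\hookrightarrow D^{1/2,2}_{q}$ controlling the initial trace, and the verification that the interpolation exponent $\theta$ stays strictly below $1$ so that the H\"older-in-time step closes --- both of which hold exactly because $q>3$. Should one prefer to avoid the decay of $\|\nabla_x u\|_{L^2}$, the uniform energy bound $\sup_t\|\nabla_x u\|_{L^2}\le C$ already yields $\int_0^t\|\nabla_x u\|_{L^\infty}^2\,d\tau \le Ct^{1-\theta}\le C(1+t)$, which is all that is required.
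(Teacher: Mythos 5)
Your proof is correct, but it runs through a different parabolic-regularity argument than the paper's. The paper stays with the vorticity: since the convective term vanishes for $d=3$, $\omega$ solves the pure heat equation $\partial_t\omega-\Delta_x\omega=-\nabla_x\times(\rho_f(u-u_f))$, and the paper applies heat-equation maximal regularity at the fixed exponent $L^4$ to get $\|\nabla_x\omega\|_{L^2(0,t;L^4)}\le C$, converts back to $u$ via a Calderon--Zygmund inequality $\|\nabla_x^2u\|_{L^4}\le C\|\nabla_x\omega\|_{L^4}$, and interpolates $\|\nabla_x u\|_{L^\infty}^2\le C\|\nabla_x^2u\|_{L^4}^{8/5}\|u\|_{L^\infty}^{2/5}+C\|u\|_{L^2}^2$ against the uniform bound on $\|u\|_{L^\infty}$ to get $\int_0^t\|\nabla_xu\|_{L^\infty}^2\,d\tau\le Ct$; the bound $\int_0^t\|\nabla_x^2u\|_{L^q}\,d\tau\le C(1+t)$ is then obtained by a separate $L^1$-in-time estimate for $\nabla_x\omega$. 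You instead work with $u$ directly, invoking the Giga--Sohr Stokes maximal regularity in $L^2(0,t;L^q)$ --- exactly the tool the paper uses in its two-dimensional argument, equation \eqref{est_ud2}, now simplified because the quadratic term is absent --- and you interpolate $\|\nabla_xu\|_{L^\infty}$ between $\|\nabla_x^2u\|_{L^q}$ and $\|\nabla_xu\|_{L^2}=\|\omega\|_{L^2}\le Ce^{-Ct}$ (the exponent $\theta=3q/(5q-6)\in(3/5,1)$ is correct for $q>3$), closing with H\"older in time. Your route buys quantitatively stronger conclusions ($\int_0^\infty\|\nabla_xu\|_{L^\infty}^2\,d\tau<\infty$ uniformly and $\int_0^t\|\nabla_x^2u\|_{L^q}\,d\tau\le C\sqrt t$, versus the paper's linear growth in both), and it replaces the paper's somewhat delicate $L^1$-in-time regularity step by a clean Cauchy--Schwarz from the $L^2$-in-time bound; the price is reliance on the Stokes maximal regularity in $L^q$ and the trace embedding $W^{2,q}\hookrightarrow D^{1/2,2}_q$, but since the paper already uses both in the 2D case, nothing in your argument goes outside the paper's toolkit. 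The paper's route, by contrast, needs only heat-kernel regularity at the single exponent $L^4$ and the drag decay of Lemma \ref{lem_drag}. Both arguments feed the resulting three integral bounds into Lemmas \ref{lem_f} and \ref{lem_f2} in the same way.
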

\begin{proof}
Similarly as in the previous subsection, we consider the vorticity equation:
\[
\pa_t \omega  - \Delta_x \omega  = -\nabla_x \times (\rho_f (u-u_f)).
\]
Then by the standard maximal regularity estimate for the heat equation, we have
\[
\|\nabla_x \omega\|_{L^2(0,t;L^4)} \leq C\|\nabla_x \omega_0\|_{L^4} + C\|\rho_f (u-u_f)\|_{L^2(0,t;L^4)},
\]
where $C>0$ is independent of $t$. Moreover, by Lemma \ref{lem_drag}, we obtain
\[
\|\nabla_x \omega\|_{L^2(0,t;L^4)} \leq C\|\nabla_x \omega_0\|_{L^4} + C,
\]
where $C>0$ is independent of $t$. Then this together with Gagliardo-Nirenberg-Sobolev interpolation inequality and the Calderon-Zygmund type inequality \cite[p. 408]{MB02} implies
\begin{align}\label{april06-10}
\int_0^t \|\nabla_x u\|_{L^\infty}^2\,d\tau &\leq C\int_0^t \|\nabla_x^2 u\|_{L^4}^{8/5}\|u\|_{L^\infty}^{2/5}\,d\tau + C\int_0^t \|u\|_{L^2}^2\,d\tau \cr
&\leq C\int_0^t \|\nabla_x^2 u\|_{L^4}^2\,d\tau + Ct \cr
&\leq C\int_0^t \|\nabla_x \omega\|_{L^4}^2\,d\tau + Ct \cr
&\leq Ct,
\end{align}
where $C>0$ is independent of $t$.
On the other hand, using Lemma \ref{lem_f2} and \eqref{april06-10}, we have
\[
\|f(\cdot,\cdot,t)\|_{W^{2,q}} \leq \|f_0\|_{W^{2,q}}  \exp\lt( Ct + \int_0^t \|\nabla_x^2 u(\cdot,\tau)\|_{L^{q}}\,d\tau  \rt)
\]
for $q > d$, where $C>0$ is independent of $t$. Thus it suffices to show that
\[
\int_0^t \|\nabla_x^2 u(\cdot,\tau)\|_{L^{q}}\,d\tau \leq C(1 + t),
\]
for some $C>0$ which is independent of $t$. Similarly as before, we estimate
\begin{align}\label{nabla-u-3D}
\begin{aligned}
\int_0^t \|\nabla_x^2 u(\cdot,\tau)\|_{L^{q}}\,d\tau &\leq C\int_0^t \|\nabla_x \omega(\cdot,\tau)\|_{L^{q}}+ \|\nabla_x u(\cdot,\tau)\|_{L^{q}}\,d\tau \cr
&\leq C\|\omega_0\|_{L^{q}}t + C\|\rho_f (u-u_f)\|_{L^1(0,t;L^{q})} \cr
&\leq C(1 + t),
\end{aligned}
\end{align}
due to Lemma \ref{lem_drag}, where $C>0$ is independent of $t$. This completes the proof.
\end{proof}

\begin{remark}
Recalling  \eqref{nabla-u-2D} and \eqref{nabla-u-3D} for $q>3$,
it is worth noting that $\nabla_x u\in L^1_{\rm loc}(\R_+; L^{\infty}(\T^d))$, $d=2,3$ and, furthermore,
\begin{equation}\label{nabla-u-est}
\int_0^t \|\nabla_x u(\cdot,\tau)\|_{L^{\infty}(\T^d)} \,d\tau \le C(1 + t),\qquad d=2,3.
\end{equation}
\end{remark}

%%%%%%%%%%%%%%%%%%%%%%%%%%%%%%%%%%%%%%%%%%%%%%%%
%
%
%
%
%
% \subsection{Three dimensional case}
%
%
%
%
%%%%%%%%%%%%%%%%%%%%%%%%%%%%%%%%%%%%%%%%%%%%%%%%

\subsubsection{Growth estimates of $\nabla_x \omega$ and $\nabla_x^2 u$}

\begin{lemma}\label{omega-u-estimate}
Let $d=2,3$. Suppose that $(f,u)$ is the strong solution  defined in Definition \ref{strong-sol} to the system \eqref{main}-\eqref{initial}.
Under the same assumptions as in Theorem  \ref{main-thm},  we have
\[
\|\nabla_x \omega(\cdot, t)\|_{L^{\infty}} +\|\nabla_x^2 u(\cdot, t)\|_{L^{p}}
\leq Ce^{C(1+t)}
\]
for $p < \infty$, where $C > 0$ is independent of $t$.
\end{lemma}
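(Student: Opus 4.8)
The plan is to reduce the whole statement to an $L^\infty$ bound on $\nabla_x\omega$, and to extract that bound from the Duhamel representation of the vorticity equation, using the gradient heat-kernel estimates of Lemma \ref{lem_hk} together with the $W^{1,q}$-regularity of $f$ already furnished by Propositions \ref{prop_fw1} and \ref{prop_fw1_d3}. Once $\norm{\nabla_x\omega(\cdot,t)}_{L^\infty}\le Ce^{C(1+t)}$ is known, the Biot--Savart/Calder\'on--Zygmund inequality gives $\norm{\nabla_x^2 u}_{L^p}\le C_p\norm{\nabla_x\omega}_{L^p}\le C_p(2\pi)^{d/p}\norm{\nabla_x\omega}_{L^\infty}$ for every $p<\infty$ (using that $\T^d$ has finite measure), which is the second half of the claim. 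Throughout I fix one exponent $q$ with $d<q<6$ when $d=3$ and $q>d$ arbitrary when $d=2$: this is the range in which both $\norm{f}_{W^{2,q}}\le e^{C(1+t)}$ (Propositions \ref{prop_fw1}, \ref{prop_fw1_d3}) and $\norm{\omega}_{L^q}\le Ce^{-Ct}$ (Lemma \ref{lemma-fluid-decay}) hold.

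The first and central step is to upgrade the drag-force decay of Lemma \ref{lem_drag} to a \emph{gradient} estimate. Writing $G:=-\int_{\R^d}(u-v)f\,dv=-\rho_f u+\int_{\R^d}vf\,dv$ and differentiating in $x$, I would bound $\norm{\nabla_x G}_{L^q}$ by three contributions. The terms $\nabla_x\rho_f\,u$ and $\nabla_x\!\int_{\R^d}vf\,dv$ are controlled, using the uniform-in-time bound $R_v(t)\le C_0$ on the velocity support (Lemma \ref{lem_supp}) and H\"older in $v$ over the ball $\{|v|\le C_0\}$, by $C_0$-powers times $\norm{\nabla_x f}_{L^q(\T^d\times\R^d)}\le\norm{f}_{W^{1,q}}\le e^{C(1+t)}$, while $\norm{u}_{L^\infty}\le C$ follows from Lemma \ref{lemma-fluid-decay} and Remark \ref{rmk_limit}. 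The remaining term $\rho_f\,\nabla_x u$ is estimated by $\norm{\rho_f}_{L^\infty}\norm{\nabla_x u}_{L^q}\le C\norm{\omega}_{L^q}\le Ce^{-Ct}$. Altogether this yields $\norm{\nabla_x G(\cdot,t)}_{L^q}\le Ce^{C(1+t)}$; keeping one derivative on $G$ (rather than leaving the source in divergence form) is exactly what will make the forthcoming time integral converge.

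Next I would run the Duhamel formula for the vorticity equation $\partial_t\omega-\Delta_x\omega+(1-\delta_{d,3})(u\cdot\nabla_x)\omega=\nabla_x\times G$, namely
\[
\nabla_x\omega(\cdot,t)=\nabla_x\Gamma(t)\star\omega_0+\int_0^t\nabla_x\Gamma(t-s)\star(\nabla_x\times G)(\cdot,s)\,ds-(1-\delta_{d,3})\int_0^t\nabla_x\Gamma(t-s)\star\big((u\cdot\nabla_x)\omega\big)(\cdot,s)\,ds,
\]
and estimate each piece in $L^\infty$. The initial-data term is bounded for $t\ge1$ by $\norm{\nabla_x\Gamma(t)}_{L^{q'}}\norm{\omega_0}_{L^q}\le C$. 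For the source term, Young's inequality and Lemma \ref{lem_hk} give $\int_0^t\norm{\nabla_x\Gamma(t-s)}_{L^{q'}}\norm{(\nabla_x\times G)(\cdot,s)}_{L^q}\,ds$; since $\norm{\nabla_x\Gamma(\tau)}_{L^{q'}}\lesssim\tau^{-1/2-(d/2)/q}e^{-c\tau}$ has near-diagonal exponent exceeding $-1$ \emph{precisely} because $q>d$, this is bounded by $e^{C(1+t)}\int_0^\infty\tau^{-1/2-(d/2)/q}e^{-c\tau}\,d\tau\le Ce^{C(1+t)}$. In three dimensions the convection term is absent (Stokes), so the argument ends here. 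In two dimensions I would treat the convection term with the $L^1$-kernel bound $\norm{\nabla_x\Gamma(\tau)}_{L^1}\lesssim\tau^{-1/2}$ and $\norm{u}_{L^\infty}\le C$, obtaining the weakly singular self-referential inequality $\norm{\nabla_x\omega(\cdot,t)}_{L^\infty}\le Ce^{C(1+t)}+C\int_0^t(t-s)^{-1/2}\norm{\nabla_x\omega(\cdot,s)}_{L^\infty}\,ds$, to which the generalized (singular-kernel) Gr\"onwall lemma applies and yields $\norm{\nabla_x\omega(\cdot,t)}_{L^\infty}\le Ce^{C(1+t)}$.

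The main obstacle is the near-diagonal singularity of the time integral: only \emph{one} spatial derivative of the heat kernel is integrable against an $L^q$ source, so it is essential (i) to spend the available regularity of $f$ on bounding $\nabla_x G$ in $L^q$ rather than leaving the source in divergence form (which would force the non-integrable kernel $\nabla_x^2\Gamma\sim\tau^{-1-(d/2)/q}$), and (ii) in two dimensions to handle the convection term self-consistently through a singular Gr\"onwall argument, for which the uniform bound $\norm{u}_{L^\infty}\le C$ and the choice $q>d$ are both crucial. A minor point to verify is the behaviour at $t=0$; as in the preceding lemmas the estimate is understood for $t\ge1$, where the smoothing of the heat semigroup renders the initial-data term harmless.
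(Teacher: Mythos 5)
Your proposal is correct in substance and shares the paper's skeleton: the Duhamel representation for the vorticity equation, the gradient heat-kernel bounds of Lemma \ref{lem_hk} with an exponent $q>d$ so that the near-diagonal singularity $(t-s)^{-\frac12-\frac{d}{2q}}$ is time-integrable, the drag-gradient bound $\|\nabla_x\times(\rho_f(u-u_f))\|_{L^q}\le C\|\nabla_x f\|_{L^q}+C\|\omega\|_{L^q}\le Ce^{C(1+t)}$ (this is exactly \eqref{est_dr2}, obtained as you do via the support bound of Lemma \ref{lem_supp} and Propositions \ref{prop_fw1}, \ref{prop_fw1_d3}), and a Calder\'on--Zygmund/Korn inequality to convert the bound on $\nabla_x\omega$ into one on $\nabla_x^2u$. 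Where you genuinely diverge is the closure of the two-dimensional convection term. The paper never self-references the $L^\infty$ norm: it first proves the auxiliary bound $\|\omega(\cdot,t)\|_{W^{1,q}}\le Ce^{C(1+t)}$ by an $L^q$ energy (differential) inequality, $\frac{d}{dt}\|\omega\|_{W^{1,q}}^{q}\le C\bke{(1-\delta_{d,3})\|\nabla_x u\|_{L^\infty}+1}\|\omega\|_{W^{1,q}}^{q}+C\|\nabla_x f\|_{L^q}^{q}$, closed by the ordinary Gr\"onwall lemma together with $\int_0^t\|\nabla_x u\|_{L^\infty}\,d\tau\le C(1+t)$ from \eqref{nabla-u-est}; only afterwards does it insert all sources, measured in $L^q$, into the Duhamel formula to read off the $L^\infty$ bound. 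You instead keep $(u\cdot\nabla_x)\omega$ at the $L^\infty$ level and close with a weakly singular Gr\"onwall inequality. That route is shorter and does yield the same exponential-in-$t$ growth (the Mittag--Leffler factor $E_{1/2}$ grows like $e^{Ct}$), but it carries a caveat the paper's argument avoids: your inequality self-references $\|\nabla_x\omega(\cdot,s)\|_{L^\infty}$, which is not a priori finite for a strong solution in the sense of Definition \ref{strong-sol}, since there $u\in L^q_{\rm loc}(\R_+;W^{2,q})$ only for $q<\infty$; to make the singular Gr\"onwall legitimate you must first justify local finiteness of this quantity (for instance by a preliminary $L^q$-level bootstrap through the same Duhamel formula, or simply by running the paper's $W^{1,q}$ energy estimate first), whereas the paper's self-referenced quantity $\|\omega\|_{W^{1,q}}$ is finite by the definition of strong solution. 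Your remaining variants — bounding the initial-data term by $\|\nabla_x\Gamma(t)\|_{L^{q'}}\|\omega_0\|_{L^q}$ for $t\ge1$ rather than by $\|\nabla_x\omega_0\|_{L^\infty}$, and using Calder\'on--Zygmund in place of Korn's inequality — are harmless.
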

\begin{proof}
Using the heat kernel, we represent $\omega$ as
\begin{align*}
\omega &= \Gamma \star \omega_0 - (1 - \delta_{d,3})\int_0^t \int_{\T^d} \Gamma(x-y,t-s)\lt((u \cdot \nabla_x \omega)(y,s)\rt) dyds\cr
&\quad - \int_0^t \int_{\T^d} \Gamma(x-y,t-s)\nabla_x \times \lt(\rho_f(y,s)(u - u_f)(y,s)\rt) dyds.
\end{align*}
The derivative estimate of $\omega$ becomes
\begin{align*}
&\|\nabla_x \omega(\cdot,t)\|_{L^\infty}\cr
&\quad  \leq \|\Gamma \star \nabla_x\omega_0\|_{L^\infty}\cr
&\qquad + \lt\| \int_0^t \int_{\T^d} \nabla_x \Gamma(x-y,t-s)\lt((1 - \delta_{d,3})(u \cdot \nabla_x \omega)(y,s) + \nabla_x \times \lt(\rho_f(y,s)(u - u_f)(y,s)\rt)\rt) dyds \rt\|_{L^\infty}\cr
&\quad \le \|\nabla_x \omega_0\|_{L^\infty}+C\int_0^t (t-s)^{-\frac{d}{4q} - \frac12} \lt( (1 - \delta_{d,3})\|(u \cdot \nabla_x \omega)(\cdot,s)\|_{L^{q}} + \| \nabla_x \times \lt(\rho_f(u - u_f)(\cdot,s)\rt)\|_{L^{q}} \rt)ds.
\end{align*}
We note that there exists $C>0$, which is independent of $t$, such that
\[
\|(u \cdot \nabla_x \omega)\|_{L^{q}} \leq \|u\|_{L^\infty}\|\nabla_x \omega\|_{L^{q}} \leq C\|\nabla_x \omega\|_{L^{q}}
\]
and
\begin{align}\label{est_dr2}
\begin{aligned}
\| \nabla_x \times(\rho_f(u - u_f))\|_{L^{q}} &\leq C\|u\nabla_x \rho_f \|_{L^{q}} + C\|\rho_f \omega\|_{L^{q}} + C\|\nabla_x (\rho_f u_f)\|_{L^{q}}\cr
&\leq C\|u\|_{L^\infty}\|\nabla_x \rho_f\|_{L^{q}} + C\|\rho_f\|_{L^\infty}\|\omega\|_{L^{q}} + C\|\nabla_x (\rho_f u_f)\|_{L^{q}}\cr
&\leq C\|\nabla_x f\|_{L^{q}} + C\|\omega\|_{L^{q}},
\end{aligned}
\end{align}
where we used the support estimate of $f$ in velocity. Therefore,
\begin{align*}
\|\nabla_x\omega(\cdot,t)\|_{L^\infty} &\leq \|\nabla_x \omega_0\|_{L^\infty}\cr
&\quad +C\int_0^t (t-s)^{-\frac{d}{4q} - \frac12} \lt( (1 - \delta_{d,3})\|\nabla_x \omega(\cdot,s)\|_{L^{q}} +C\|\nabla_x f(\cdot,s)\|_{L^{q}} + C\|\omega(\cdot,s)\|_{L^{q}}\rt)ds.
\end{align*}
Since $q > d$, we note that $1 - \frac1q\lt(\frac d2 + 1 \rt) > 0$. We recall from
Propositions \ref{prop_fw1} and \ref{prop_fw1_d3} that
\[
\|\nabla_x f\|_{L^{q}(0,t;L^{q})} \leq Ce^{Ct}.
\]
Next, we estimate $\nabla_x^k \omega$
 for $k=0,1$
\begin{align*}
\frac{d}{dt} \|\nabla_x^k \omega\|_{L^{q}}^{q} &= q\intt |\nabla_x^k \omega|^{q-2}\nabla_x^k \omega \pa_t \nabla_x^k \omega\,dx\cr
&\leq C_q(1 - \delta_{d,3}) \|\nabla_x u\|_{L^\infty}\|\nabla_x^k \omega\|_{L^{q}}^{q} - q(q-1)\intt  |\nabla_x^k \omega|^{q-2}|\nabla_x^{k+1} \omega|^2\,dx \cr
&\quad + q(q-1)\intt  |\nabla_x^k \omega|^{q-2}|\nabla_x^{k+1} \omega| |\nabla_x \times (\rho_f (u-u_f))|\,dx,
\end{align*}
where the last term on the right hand side of the above inequality can be bounded by
\begin{align*}
&q(q-1) \lt(\intt |\nabla_x^k \omega|^{q}\,dx \rt)^{\frac{q-2}{2q}}\lt( \intt  |\nabla_x^k \omega|^{q-2}|\nabla_x^{k+1} \omega|^2\,dx\rt)^{\frac12}\lt(\intt  |\nabla_x \times (\rho_f (u-u_f))|^{q}\,dx\rt)^{\frac{1}{q}}\cr
&\quad \leq q(q-1)\intt  |\nabla_x^k \omega|^{q-2}|\nabla_x^{k+1} \omega|^2\,dx + q(q-1)\|\nabla_x^k \omega\|_{L^{q}}^{q-2}\|\nabla_x \times (\rho_f (u-u_f))\|_{L^{q}}^2\cr
&\quad \leq q(q-1)\intt  |\nabla_x^k \omega|^{q-2}|\nabla_x^{k+1} \omega|^2\,dx + C_q\|\nabla_x^k \omega\|_{L^{q}}^{q} + C_q\|\nabla_x f\|_{L^{q}}^{q} + C\|\omega\|_{L^{q}}^{q},
\end{align*}
due to \eqref{est_dr2}.
Thus we obtain
\[
\frac{d}{dt} \|\omega\|_{W^{1,q}}^{q} \leq C\lt((1 - \delta_{d,3}) \|\nabla_x u\|_{L^\infty}+ 1 \rt)\|\omega\|_{W^{1,q}}^{q} + C\|\nabla_x f\|_{L^{q}}^{q}.
\]
Since
\[
\int_0^t \|\nabla_x u\|_{L^\infty}\,d\tau \leq C(1+t),
\]
we use Gr\"onwall's lemma to have
\begin{align*}
\|\omega(\cdot,t)\|_{W^{1,q}}^{q} &\leq \|\omega_0\|_{W^{1,q}}^{q}e^{C(1+t)} + Ce^{C(1+t)}\int_0^t \|\nabla_x f(\cdot,\tau)\|_{L^{q}}^{q}\,d\tau \leq Ce^{C(1+t)},
\end{align*}
where $C>0$ is independent of $t$. This yields
\[
\|\nabla_x \omega(\cdot,t)\|_{L^\infty}  \leq Ce^{C(1+t)}.
\]
It is also immediate due to Korn's inequality that
\[
\|\nabla_x^2 u(\cdot,t)\|_{L^p} \leq Ce^{C(1+t)}
\]
for any $1\le p<\infty$. This completes the proof.
\end{proof}

%%%%%%%%%%%%%%%%%%%%%%%%%%%%%%%%%%%%%%%%%%%%%%%%
%
%
%
%
%
% \subsection{Three dimensional case}
%
%
%
%
%%%%%%%%%%%%%%%%%%%%%%%%%%%%%%%%%%%%%%%%%%%%%%%%

\subsubsection{Exponential decay estimates}
In the subsection, we prove exponential decay of generalized moments for derivatives of $f$ up to second order.

First, we show that the first order derivatives of $f$ with power-law type of moments is of an exponential decay in time,  which is a part of Theorem \ref{main-thm} for the case that $k=1$.

\begin{proposition}\label{thm_decay_w1}Let $d=2$ or $3$. Suppose that $(f,u)$ is the strong solution  defined in Definition \ref{strong-sol} to the system \eqref{main}-\eqref{initial}. Under the same assumptions as in Theorem  \ref{main-thm},  there exists $p=p(q, d, \psi, f_0, u_0) > 2$ large enough such that
\[
\inttr | v - v_c(t)|^{p} \lt( |(\nabla_x f)(x,v,t)|^{q} + |(\nabla_v f)(x,v,t)|^{q}\rt)dxdv \leq Ce^{-Ct},
\]
where $C > 0$ is independent of $p$ and $t$.
\end{proposition}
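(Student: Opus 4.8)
The plan is to differentiate the weighted quantity
\[
Y(t):=\inttr |v-v_c(t)|^{p}\lt(|\nabla_x f|^{q}+|\nabla_v f|^{q}\rt)dxdv
\]
and to reproduce, at the level of first derivatives, the dissipation mechanism of Proposition \ref{thm_main}. First I would use the transport equations for $\pa_{x_j}f$ and $\pa_{v_j}f$ derived in the proof of Lemma \ref{lem_f} and compute $\frac{d}{dt}Y$ componentwise. The $v_c'(t)$ contribution is controlled exactly as in $J_1$ of Proposition \ref{thm_main}: using $|v_c'(t)|\le Ce^{-Ct}$ from Theorem \ref{thm_lt} and Young's inequality it produces a term $\tfrac{p\delta}{q}Y$ plus a remainder $C_{q,\delta}^{p}e^{-Cpt}\inttr(|\nabla_x f|^{q}+|\nabla_v f|^{q})dxdv$, while the free-transport term integrates to zero since $\nabla_x\cdot v=0$ and the weight $|v-v_c|^{p}$ is independent of $x$.

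The dissipation is extracted as in $J_2^{12}$: integrating the terms $F\cdot\nabla_v\pa_{x_j}f$ and $F\cdot\nabla_v\pa_{v_j}f$ by parts in $v$ moves a derivative onto $|v-v_c|^{p}F$, and decomposing the drag part as $u-v=(u-u_c)+(u_c-v_c)+(v_c-v)$ the last summand yields the negative contribution $-p\inttr|v-v_c|^{p}|\pa_{x_j}f|^{q}dxdv$ (and likewise for $\pa_{v_j}f$); the parts $u-u_c$, $u_c-v_c$ and the alignment force $F_a$ are absorbed by Young's inequality into $\tfrac{p\delta}{q}Y$ plus source terms that decay by Theorem \ref{thm_lt} and Lemma \ref{lemma-fluid-decay}. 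The remaining force terms coming from $\nabla_v\cdot F=-d(1+\psi\star\rho_f)$ and from $\pa_{v_j}F=-(1+\psi\star\rho_f)e_j$ produce growth of $Y$, but only with coefficients of order $O(q)$ that are independent of $p$, exactly as the $J_2^{2}$ term before.

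The genuinely new difficulty, which I expect to be the main obstacle, lies in the coupling between $\nabla_x f$ and $\nabla_v f$: the $\pa_{x_j}f$ equation contains $\pa_{x_j}F\cdot\nabla_v f$ and the $\pa_{v_j}f$ equation contains the source $-\pa_{x_j}f$. The second coupling is harmless, bounded by Young's inequality by $C(q)Y$ with a constant independent of $p$ and of $t$. For the first, writing $\pa_{x_j}F=\pa_{x_j}\psi\star(\rho_f u_f)-v\,\pa_{x_j}\psi\star\rho_f+\pa_{x_j}u$ as in \eqref{est_F}, all pieces but $\pa_{x_j}u$ are bounded uniformly in time by Lemma \ref{lem_supp}, so Young's inequality gives a contribution $\le C(q)(1+\|\nabla_x u\|_{L^\infty})Y$ whose coefficient is again independent of $p$. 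Crucially $\|\nabla_x u(\cdot,t)\|_{L^\infty}$ is not bounded uniformly in time, but by \eqref{nabla-u-est} its time integral grows only linearly, $\int_0^t\|\nabla_x u\|_{L^\infty}d\tau\le C(1+t)$, so this coupling enters the Gr\"onwall exponent only through a term linear in $t$.

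Collecting these estimates I arrive at a differential inequality
\[
\frac{d}{dt}Y(t)\le\lt[-p(1+\psi_m M_0-C\delta)+C(q)(1+\|\nabla_x u(\cdot,t)\|_{L^\infty})\rt]Y(t)+S(t),
\]
where $C(q)$ is independent of $p$, and the source $S(t)$ consists of the remainders $C_{q,\delta}^{p}(e^{-Cpt}+\|u-u_c\|_{L^\infty}^{p})\|f\|_{W^{1,q}}^{q}$, which decay by Lemma \ref{lemma-fluid-decay} together with the bound $\|f\|_{W^{1,q}}\le e^{C(1+t)}$ of Propositions \ref{prop_fw1} and \ref{prop_fw1_d3}, and of the weighted moment $\inttr|v-v_c|^{p}|f|^{q}dxdv$ generated by the factors $\pa_{x_j}\psi\star\rho_f$, which decays by Proposition \ref{thm_main} with the same $p$. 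I would then first fix $\delta>0$ so small that $1+\psi_m M_0-C\delta>0$, and then choose $p$ large enough that $p(1+\psi_m M_0-C\delta)$ dominates $C(q)$ even after the linear-in-$t$ growth coming from \eqref{nabla-u-est} is absorbed into the exponent; Gr\"onwall's lemma then yields $Y(t)\le Ce^{-Ct}$ for $t\ge1$ with $C$ independent of $p$ and $t$, and the range $0\le t\le1$ follows from the uniform-in-time bounds already available. The delicate point throughout is to keep every constant except the drag dissipation independent of $p$, so that the single large parameter $p$ simultaneously beats the $O(q)$ growth and the linear-in-$t$ exponent produced by $\nabla_x u$.
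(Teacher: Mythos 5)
Your proposal follows essentially the same route as the paper's proof: differentiating the weighted quantity componentwise, extracting the $-p(1+\psi_m M_0-C\delta)$ dissipation from the drag via integration by parts against $|v-v_c|^p F$, bounding the $\pa_{x_j}F\cdot\nabla_v f$ coupling by $C(1+\|\nabla_x u\|_{L^\infty})$ times the weighted norms, absorbing that non-uniform coefficient into the Gr\"onwall exponent through its linear-in-time integral, and killing the $e^{-Cpt}\|f\|_{W^{1,q}}^q$ sources with the growth bounds of Propositions \ref{prop_fw1} and \ref{prop_fw1_d3} by taking $p$ large. The only cosmetic difference is that the paper works with $\|\nabla_x u\|_{L^\infty}^2$ and the integrating factor $\exp(-\int_0^t\|\nabla_x u\|_{L^\infty}^2\,d\tau)$ via \eqref{est_ul}, whereas you use the first power and \eqref{nabla-u-est}; both rest on the same linear-in-$t$ bound, so the arguments coincide in substance.
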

\begin{proof}
Straightforward computations give
\begin{align*}
&\frac{1}{q}\frac{d}{dt} \inttr | v - v_c|^{p} |\pa_{v_j}f|^{q}\,dxdv\cr
&\quad = -\frac pq\inttr |v - v_c|^{p-2}(v - v_c) \cdot v_c'  |\pa_{v_j}f|^{q}\,dxdv + \inttr |v - v_c|^{p}  |\pa_{v_j}f|^{q-2} \pa_{v_j} f \pa_t \pa_{v_j}f\,dxdv
%\cr
%&\quad 
=: J_1 + J_2.
\end{align*}
First we  estimate $J_1$ as
\begin{align*}
J_1 &\leq \frac pq\inttr |v - v_c|^{p-1}|v_c'|  |\pa_{v_j}f|^{q}\,dxdv \cr
&\leq \frac{p}{4q}\inttr |v - v_c|^{p} |\pa_{v_j}f|^{q}\,dxdv + C_{p,q} e^{-Cpt}\inttr  |\pa_{v_j}f|^{q}\,dxdv.
\end{align*}
On the other hand, we split $J_2$ into two terms, i.e.
\begin{align*}
J_2 &= -\inttr |v - v_c|^{p}  |\pa_{v_j}f|^{q-2} \pa_{v_j} f \lt(\pa_{x_j} f + \nabla_v \cdot \lt((\pa_{v_j}F)f + F\pa_{v_j}f\rt)\rt)dxdv
%\cr
%& 
=: J_2^1 + J_2^2.
\end{align*}
Here we estimate $J_2^1$ as
\begin{align*}
J_2^1 &\leq \inttr |v - v_c|^{p}  |\pa_{v_j}f|^{q-1}|\pa_{x_j} f |\,dxdv \cr
&\leq \frac{q-1}{q}\inttr |v - v_c|^{p} |\pa_{v_j}f|^{q}\,dxdv + \frac{1}{q}\inttr |v - v_c|^{p} |\pa_{x_j}f|^{q}\,dxdv.
\end{align*}
For $J_2^2$, we use $\nabla_v \cdot \pa_{v_j}F = 0$ to get
\begin{align*}
J_2^2&= -\inttr |v - v_c|^{p}  |\pa_{v_j}f|^{q-2} \pa_{v_j} f \lt( (\pa_{v_j}F)\cdot \nabla_v f + (\nabla_v \cdot F)\pa_{v_j}f + F \cdot \nabla_v \pa_{v_j}f\rt)dxdv\cr
&= (d+1)\inttr (1 + \psi \star \rho_f)  |v - v_c|^{p}  |\pa_{v_j}f|^{q} \,dxdv -\inttr |v - v_c|^{p}  |\pa_{v_j}f|^{q-2} \pa_{v_j} f (F \cdot \nabla_v \pa_{v_j}f)\,dxdv.
\end{align*}
On the other hand, we use the integration by parts to estimate the second term on the right hand side of the above equality as
\begin{align*}
&-\frac{1}{q}\inttr |v - v_c|^{p}F \cdot  \nabla_v |\pa_{v_j}f|^{q}\,dxdv\cr
&\quad =\frac{1}{q} \inttr \lt(p|v-v_c|^{p-2}(v - v_c) \cdot F + |v - v_c|^{p} \nabla_v \cdot F \rt) |\pa_{v_j}f|^{q}\,dxdv\cr
&\quad \leq -\frac{p\psi_m M_0}{q} \inttr |v - v_c|^{p}|\pa_{v_j}f|^{q}\,dxdv + C_{p,q} e^{-Cp t}\inttr |\pa_{v_j}f|^{q}\,dxdv\cr
&\qquad -\frac {p(1 - C\delta)}{q} \inttr |v - v_c|^{p}|\pa_{v_j}f|^{q}\,dxdv + C_{p,q}\|u - u_c\|_{L^\infty}^{p} \inttr |\pa_{v_j}f|^{q}\,dxdv\cr
&\qquad - \frac{d}{q}\inttr (1 + \psi \star \rho_f)  |v - v_c|^{p}  |\pa_{v_j}f|^{q} \,dxdv,
\end{align*}
where we also used some estimates in the proof of Proposition \ref{thm_main} and $\delta>0$ will be determined later. Summing up above estimates, we obtain
\[
J_2 \leq \lt(d+1 - \frac{d}{q} -\frac{p\psi_m M_0}{q}-\frac {p(1 - C\delta)}{q}\rt) \inttr |v - v_c(t)|^{p}|\pa_{v_j}f|^{q}\,dxdv + C_{p,q} e^{-Cp t}\inttr |\pa_{v_j}f|^{q}\,dxdv,
\]
and as a result, we have
\begin{align*}
&\frac{d}{dt} \inttr | v - v_c|^{p} |\pa_{v_j}f|^{q}\,dxdv \cr
&\quad  \leq \lt(q(d+1) + q-1- d - p(1 + \psi_m M_0- C\delta)\rt) \inttr |v - v_c|^{p}|\pa_{v_j}f|^{q}\,dxdv\cr
&\qquad  + C_{p,q} e^{-Cp t}\inttr |\pa_{v_j}f|^{q}\,dxdv + \inttr |v - v_c|^{p} |\pa_{x_j}f|^{q}\,dxdv.
\end{align*}
In order to control the last term on the right hand side of the above inequality, we estimate
\begin{align*}
&\frac{1}{q}\frac{d}{dt} \inttr | v - v_c|^{p} |\pa_{x_j}f|^{q}\,dxdv\cr
&\quad = -\frac pq\inttr |v - v_c|^{p-2}(v - v_c) \cdot v_c'  |\pa_{x_j}f|^{q}\,dxdv + \inttr |v - v_c|^{p}  |\pa_{x_j}f|^{q-2} \pa_{x_j} f \pa_t \pa_{x_j}f\,dxdv\cr
&\quad =: K_1 + K_2.
\end{align*}
Then similarly as before, we get
\[
K_1 \leq \frac{p}{4q}\inttr |v - v_c|^{p} |\pa_{x_j}f|^{q}\,dxdv + C_{p,q} e^{-Cpt}\inttr  |\pa_{x_j}f|^{q}\,dxdv.
\]
The second term $K_2$ is rewritten as
\begin{align*}
K_2 &= -\inttr |v - v_c|^{p}  |\pa_{x_j}f|^{q-2} \pa_{x_j} f \lt( \nabla_v \cdot \lt((\pa_{x_j}F)f + F\pa_{x_j}f\rt)\rt)dxdv
%\cr
%& 
=: K_2^1 + K_2^2,
\end{align*}
where $K_2^1$ can be estimated as
\begin{align*}
K_2^1 &=  -\inttr |v - v_c|^{p}  |\pa_{x_j}f|^{q-2} \pa_{x_j} f \lt( (\nabla_v \cdot \pa_{x_j}F)f + (\pa_{x_j}F)\cdot \nabla_v f\rt) dxdv \cr
&\leq d\|\pa_{x_j}\psi\|_{L^\infty} \inttr |v - v_c|^{p}  |\pa_{x_j}f|^{q-1} f\,dxdv \cr
&\quad + C(1 + \|\pa_{x_j}u\|_{L^\infty}) \inttr |v - v_c|^{p}  |\pa_{x_j}f|^{q-1} |\nabla_v f|\,dxdv\cr
&\leq d\|\pa_{x_j}\psi\|_{L^\infty}\lt(\frac{q-1}{q}\inttr |v - v_c|^{p}  |\pa_{x_j}f|^{q}\,dxdv + \frac{1}{q}\inttr |v - v_c|^{p}  f^{q}\,dxdv\rt)\cr
&\quad + C(1 + \|\pa_{x_j}u\|_{L^\infty})\lt(\frac{q-1}{q}\inttr |v - v_c|^{p}  |\pa_{x_j}f|^{q}\,dxdv + \frac{1}{q}\inttr |v - v_c|^{p}  |\nabla_v f|^{q}\,dxdv\rt).
\end{align*}
We estimate $K_2^2$ as follows:
\begin{align*}
K_2^2 &= -\inttr |v - v_c|^{p}  |\pa_{x_j}f|^{q-2} \pa_{x_j} f \lt( (\nabla_v \cdot F)\pa_{x_j}f + F \cdot \nabla_v \pa_{x_j} f\rt)dxdv\cr
&=d\inttr (1 + \psi \star \rho_f)|v - v_c|^{p}  |\pa_{x_j}f|^{q}\,dxdv 
\cr
&\quad 
+ \frac{1}{q} \inttr \lt(p|v-v_c|^{p-2}(v - v_c) \cdot F + |v - v_c|^{p} \nabla_v \cdot F \rt) |\pa_{x_j}f|^{q}\,dxdv\cr
&= \frac{p}{q} \inttr |v-v_c|^{p-2}(v - v_c) \cdot F |\pa_{x_j}f|^{q}\,dxdv+ d\lt(1 - \frac{1}{q} \rt)\inttr (1 + \psi \star \rho_f)|v - v_c|^{p}  |\pa_{x_j}f|^{q}\,dxdv\cr
&\leq   -\lt(\frac{p\psi_m M_0}{q} + \frac {p(1 - C\delta)}{q}\rt) \inttr |v - v_c|^{p}|\pa_{x_j}f|^{q}\,dxdv + C_{p,q} e^{-Cp t}\inttr |\pa_{x_j}f|^{q}\,dxdv\cr
&\quad + d\lt(1 - \frac{1}{q} \rt)(1 + \psi_M)\inttr |v - v_c|^{p}  |\pa_{x_j}f|^{q}\,dxdv.
\end{align*}
Combining all of the above observations yields
\begin{align*}
&\frac{d}{dt} \inttr | v - v_c|^{p} |\pa_{x_j}f|^{q}\,dxdv \cr
&\quad \leq (C_q - Cp)\inttr | v - v_c|^{p} |\pa_{x_j}f|^{q}\,dxdv + C_{p,q} e^{-Cp t}\inttr |\pa_{x_j}f|^{q}\,dxdv + C_{p,q} e^{-Cp t} \cr
&\qquad + C\|\pa_{x_j}u\|_{L^\infty}^2\lt( \inttr |v - v_c|^{p}  |\pa_{x_j}f|^{q}\,dxdv + \inttr |v - v_c|^{p}  |\nabla_v f|^{q}\,dxdv\rt),
\end{align*}
where $C>0$ is independent of both $p$ and $t$ and $\delta>0$ is chosen small enough so that $1 + \psi_m - C\delta>0$. Thus we have
\begin{align*}
&\frac{d}{dt}\inttr | v - v_c|^{p} \lt( |\nabla_x f|^{q} + |\nabla_v f|^{q}\rt)dxdv \cr
&\quad \leq -C_0p\inttr | v - v_c|^{p} \lt( |\nabla_x f|^{q} + |\nabla_v f|^{q}\rt)dxdv + C_{p,q} e^{-Cp t}  \cr
&\qquad + C_{p,q} e^{-Cp t}\inttr \lt( |\nabla_x f|^{q} + |\nabla_v f|^{q}\rt)dxdv + \|\nabla_x u\|_{L^\infty}^2\inttr | v - v_c|^{p} \lt( |\nabla_x f|^{q} + |\nabla_v f|^{q}\rt)dxdv.
\end{align*}
We then use Proposition \ref{prop_fw1} to estimate
\[
 C_{p,q} e^{-Cp t}\inttr \lt( |\nabla_x f|^{q} + |\nabla_v f|^{q}\rt)dxdv \leq  C_{p,q} e^{-Cp t} e^{C(1+t)} \leq C_{p,q} e^{-C_1 p t}
\]
for $p > 0$ large enough. For convenience, we set
\[
\mathcal{F}(t) := \lt(\inttr | v - v_c|^{p} \lt( |\nabla_x f|^{q} + |\nabla_v f|^{q}\rt)dxdv\rt) \exp\lt(-\int_0^t  \|\nabla_x u\|_{L^\infty}^2\,d\tau\rt),
\]
then $\mathcal{F}$ satisfies
\[
\frac{d}{dt}\mathcal{F} \leq C_{p,q} e^{-C_1p t}\exp\lt(-\int_0^t  \|\nabla_x u\|_{L^\infty}^2\,d\tau\rt)  - C_0p\mathcal{F}.
\]
Applying the Gr\"onwall's lemma, we get
\[
\mathcal{F}(t) \leq \mathcal{F}_0 e^{-C_0 p t} + C_{p,q} e^{-C_0 p t} \int_0^t e^{(C_0 - C_1)ps} \exp\lt(-\int_0^s  \|\nabla_x u\|_{L^\infty}^2\,d\tau\rt)ds,
\]
and this together with \eqref{est_ul} asserts
\begin{align*}
&\inttr | v - v_c|^{p} \lt( |\nabla_x f|^{q} + |\nabla_v f|^{q}\rt)dxdv\cr
&\quad \leq \lt(\inttr | v - v_c|^{p} \lt( |\nabla_x f_0|^{q} + |\nabla_v f_0|^{q}\rt)dxdv\rt)\exp\lt(\int_0^t  \|\nabla_x u\|_{L^\infty}^2\,d\tau\rt)e^{-C_0pt}\cr
&\qquad + C_{p,q} e^{-C_0 p t} \int_0^t e^{(C_0 - C_1)ps} \exp\lt(\int_s^t  \|\nabla_x u\|_{L^\infty}^2\,d\tau\rt)ds\cr
&\quad \leq \lt(\inttr | v - v_c|^{p} \lt( |\nabla_x f_0|^{q} + |\nabla_v f_0|^{q}\rt)dxdv\rt) e^{-(C_0p - C)t} +C_{p,q} e^{-(C_0 p -C)t}\int_0^t e^{(C_0 - C_1)ps} \,ds \cr
&\quad \leq \lt(\inttr | v - v_c|^{p} \lt( |\nabla_x f_0|^{q} + |\nabla_v f_0|^{q}\rt)dxdv\rt) e^{-(C_0p - C)t} +  C\lt(e^{-(C_0p - C)t}  + e^{-(C_1 p - C)t}\rt),
\end{align*}
where the constants $C_0, C_1$, and $C$ are positive and independent of $t$. This completes the proof.
\end{proof}

Now, we are ready to present the proof of Theorem \ref{main-thm}. \newline

\begin{mainthm}
Due to results of Proposition \ref{thm_main} and Propsotion \ref{thm_decay_w1}, it suffices to show that there exists $p=p(q,  d, \psi, f_0, u_0) > 2$ large enough such that
\[
\inttr | v - v_c(t)|^{p} \lt( |(\nabla_x^2 f)(x,v,t)|^{q} + |(\nabla_x \nabla_v f)(x,v,t)|^{q}+ |(\nabla_v^2 f)(x,v,t)|^{q}\rt)dxdv \leq Ce^{-Ct},
\]
where $C > 0$ is independent of $p$ and $t$.

Since the proof is rather lengthy, we divide it into four steps:
\begin{itemize}
\item In Step A, we estimate the generalized $p$-th moments of the second order derivative of $f$ in spatial variable and derive
\begin{align*}
&\frac{d}{dt} \inttr |v - v_c|^{p} |\nabla_x^2 f|^{q}\,dxdv\cr
&\quad \leq (C+\|\nabla_x u\|_{L^\infty}) \inttr |v - v_c|^{p}\lt( |\nabla_x^2 f|^{q} +  |\nabla_v\nabla_x f|^{q} \rt)dxdv 
\cr
&\qquad 
+ C\inttr |v - v_c|^{p} \lt(|f|^{q} + |\nabla_x f|^{q}\rt)dxdv\cr
&\qquad +  C\lt(1 + \norm{\nabla^2_x u}^{\frac{q^2}{d}}_{L^\infty(0,t;L^q)}\rt)\inttr |v - v_c|^{p} |\nabla_v f|^{q}\,dxdv +  Ce^{-Cpt} \inttr  |\nabla_x^2 f|^{q}\,dxdv\cr
&\qquad -p\psi_mM_0 \inttr |v - v_c|^{p}|\nabla_x^2 f|^{q}\,dxdv -p \inttr |v - v_c|^{p}|\nabla_x^2 f|^{q}\,dxdv\cr
&\qquad  - d\inttr (1 + \psi \star \rho_f)  |v - v_c|^{p}  |\nabla_x^2 f|^{q} \,dxdv.
\end{align*}
\item In Step B, we estimate the mixed derivatives and derive
\begin{align*}
&\frac{d}{dt}\inttr |v - v_c|^{p} |\nabla_x \nabla_v f|^{q}\,dxdv\cr
&\quad \leq (C+\|\nabla_x u\|_{L^\infty}) \inttr |v - v_c|^{p} |\nabla_x \nabla_v f|^{q}\,dxdv + \inttr |v - v_c|^{p} |\nabla_x^2 f|^{q}\,dxdv\cr
&\qquad + C\inttr |v - v_c|^{p} |\nabla_v f|^{q}\,dxdv + (C+\|\nabla_x u\|_{L^\infty})\inttr |v - v_c|^{p} |\nabla_v^2 f|^{q}\,dxdv\cr
&\qquad - p\psi_m M_0\inttr |v - v_c|^{p}|\nabla_x \nabla_v f|^{q}\,dxdv + C_{p,q} e^{-Cp t}\inttr |\nabla_x \nabla_vf|^{q}\,dxdv\cr
&\qquad - p \inttr |v - v_c|^{p}|\nabla_x \nabla_v f|^{q}\,dxdv  - d\inttr (1 + \psi \star \rho_f)  |v - v_c|^{p}  |\nabla_x \nabla_v f|^{q} \,dxdv.
\end{align*}
\item In Step C, we estimate the generalized $p$-th moments of the second order derivative of $f$ in velocity variable and derive
\begin{align*}
&\frac{d}{dt}\inttr |v - v_c|^{p} |\nabla_v^2 f|^{q}\,dxdv\cr
&\quad \leq  C\inttr |v - v_c|^{p} |\nabla_v^2 f|^{q}\,dxdv +  \inttr |v - v_c|^{p} |\nabla_x \nabla_v f|^{q}\,dxdv\cr
&\qquad - p\psi_m M_0 \inttr |v - v_c|^{p}|\nabla_v^2 f|^{q}\,dxdv + C_{p,q} e^{-Cp t}\inttr |\nabla_v^2 f|^{q}\,dxdv\cr
&\qquad   -p \inttr |v - v_c|^{p}|\nabla_v^2 f|^{q}\,dxdv  - d\inttr (1 + \psi \star \rho_f)  |v - v_c|^{p}  |\nabla_v^2 f|^{q} \,dxdv.
\end{align*}
\item In Step D, we combine all of the estimates in the previous steps together with the lower order estimates to conclude our desired result.
\end{itemize}

{\bf Step A.-} For $i,j=1,\dots,d$, we first estimate
\begin{align*}
\frac{1}{q}\frac{d}{dt} \inttr |v - v_c|^{p} |\pa_{x_i} \pa_{x_j} f|^{q}\,dxdv
&= -\frac pq \inttr |v - v_c|^{p-2}(v - v_c) \cdot v_c'  |\pa_{x_i} \pa_{x_j} f|^{q}\,dxdv \cr
&\quad + \inttr |v - v_c|^{p} |\pa_{x_i} \pa_{x_j} f|^{q-2} \pa_{x_i} \pa_{x_j} f \,\pa_t \pa_{x_i} \pa_{x_j} f\,dxdv
\cr
&
=: I_1 + I_2,
\end{align*}
where $I_1$ can be easily estimated as
\[
I_1 \leq \frac{p}{4q} \inttr |v - v_c|^{p} |\pa_{x_i} \pa_{x_j} f|^{q}\,dxdv + Ce^{-Cpt} \inttr  |\pa_{x_i} \pa_{x_j} f|^{q}\,dxdv.
\]
For the estimate of $I_2$, we find
\begin{align*}
I_2 &= -\inttr |v - v_c|^{p} |\pa_{x_i} \pa_{x_j} f|^{q-2} \pa_{x_i} \pa_{x_j} f \lt( (\nabla_v \cdot \pa_{x_i} \pa_{x_j}F) f + \pa_{x_i} \pa_{x_j} F \cdot \nabla_v f \rt) \,dxdv\cr
&\quad -\inttr |v - v_c|^{p} |\pa_{x_i} \pa_{x_j} f|^{q-2} \pa_{x_i} \pa_{x_j} f \lt( (\nabla_v \cdot \pa_{x_j}F) \pa_{x_i} f + \pa_{x_j} F \cdot \nabla_v \pa_{x_i} f\rt) \,dxdv \cr
&\quad -\inttr |v - v_c|^{p} |\pa_{x_i} \pa_{x_j} f|^{q-2} \pa_{x_i} \pa_{x_j} f \lt( (\nabla_v \cdot \pa_{x_i} F) \pa_{x_j} f + \pa_{x_i} F \cdot \nabla_v \pa_{x_j} f \rt) \,dxdv \cr
&\quad -\inttr |v - v_c|^{p} |\pa_{x_i} \pa_{x_j} f|^{q-2} \pa_{x_i} \pa_{x_j} f \lt( (\nabla_v \cdot F) \pa_{x_i} \pa_{x_j} f - F \cdot \nabla_v \pa_{x_i} \pa_{x_j}f \rt) \,dxdv 
\cr
&
=: \sum_{i=1}^8 I_2^i.
\end{align*}
We first estimate $I_2^2$ as 
\begin{align*}
I_2^2 &\leq C\inttr  |v - v_c|^{p} |\pa_{x_i} \pa_{x_j} f|^{q-1}|\nabla_v f|\,dxdv+\inttr  |v - v_c|^{p} |\pa_{x_i} \pa_{x_j} f|^{q-1}|\pa_{x_i}\pa_{x_j}u||\nabla_v f|\,dxdv\cr
&\leq C \inttr |v - v_c|^{p} |\pa_{x_i} \pa_{x_j} f|^{q}\,dxdv +  C\inttr |v - v_c|^{p} |\nabla_v f|^{q}\,dxdv \cr
&\quad +\inttr  |v - v_c|^{p} |\pa_{x_i} \pa_{x_j} f|^{q-1}|\pa_{x_i}\pa_{x_j}u||\nabla_v f|\,dxdv.
\end{align*}
The last term in the above inequality can be estimated as follows:
\begin{align*}
& \inttr  |v - v_c|^{p} |\pa_{x_i} \pa_{x_j} f|^{q-1}|\pa_{x_i}\pa_{x_j}u||\nabla_v f|\,dxdv\cr
 &\quad =\int_{\R^d} |v - v_c|^{p} \int_{\T^d} |\pa_{x_i} \pa_{x_j} f|^{q-1}|\pa_{x_i}\pa_{x_j}u||\nabla_v f|\,dxdv\cr
&\quad \le \int_{\R^d} |v - v_c|^{p} \norm{\nabla^2_x f}^{q-1}_{L^q_x}\norm{\nabla^2_x u}_{L^q}\norm{\nabla_v f}_{L^{\infty}_x}\,dv\cr
&\quad \le \norm{\nabla^2_x u}_{L^q_x L^{\infty}_t}\int_{\R^d} |v - v_c|^{p} \norm{\nabla^2_x f}^{q-1}_{L^q_x}\norm{\nabla_v f}^{1-\frac{d}{q}}_{L^{q}_x}\norm{\nabla_x\nabla_v f}^{\frac{d}{q}}_{L^{q}_x}\,dv\cr
&\quad \le  \norm{\nabla^2_x u}_{L^q_x L^{\infty}_t}\int_{\R^d} |v - v_c|^{p(\frac{(q-1)+1-\frac{d}{q}+\frac{d}{q}}{q})} \norm{\nabla^2_x f}^{q-1}_{L^q_x}\norm{\nabla_v f}^{1-\frac{d}{q}}_{L^{q}_x}\norm{\nabla_x\nabla_v f}^{\frac{d}{q}}_{L^{q}_x}\,dv\cr
&\quad \le  \norm{\nabla^2_x u}_{L^q_x L^{\infty}_t} \bke{\int_{\R^d}  |v - v_c|^{p}  \norm{\nabla^2_x f}^{q}_{L^q_x}\,dv}^{\frac{q-1}{q}}\bke{\int_{\R^d}  |v - v_c|^{p}  \norm{\nabla_v f}^{q}_{L^q_x}\,dv}^{\frac{q-d}{q^2}}
\cr
&\hspace{5cm}  \times \bke{\int_{\R^d}  |v - v_c|^{p} \norm{\nabla_x\nabla_v f}^{q}_{L^q_x}\,dv}^{\frac{d}{q^2}}\cr
&\quad = \norm{\nabla^2_x u}_{L^q_x L^{\infty}_t} \bke{ \inttr  |v - v_c|^{p}\abs{\nabla^2_x f}^{q}\,dxdv}^{\frac{q-1}{q}} \bke{ \inttr  |v - v_c|^{p}\abs{\nabla_v f}^{q}\,dxdv}^{\frac{q-d}{q^2}}\cr
&\hspace{5cm}  \times \bke{ \inttr  |v - v_c|^{p}\abs{\nabla_x \nabla_v  f}^{q}\,dxdv}^{\frac{d}{q^2}}\cr
&\quad \le C \inttr  |v - v_c|^{p}\abs{\nabla^2_x f}^{q}\,dxdv+ C\inttr  |v - v_c|^{p}\abs{\nabla_x \nabla_v  f}^{q}\,dxdv\cr
&\qquad + C\norm{\nabla^2_x u}^{\frac{q^2}{d}}_{L^\infty(0,t;L^q)} \inttr  |v - v_c|^{p}\abs{\nabla_v f}^{q}\,dxdv.
\end{align*}
Next, we then estimate other terms $I_2^i$, $1\le i\le 8$ and $i\neq 2$, separately.
\begin{align*}
I_2^1 &\leq d\|\nabla_x^2\psi\|_{L^\infty} \inttr  |v - v_c|^{p} |\pa_{x_i} \pa_{x_j} f|^{q-1}|f|\,dxdv\cr
&\leq C \inttr |v - v_c|^{p} |\pa_{x_i} \pa_{x_j} f|^{q}\,dxdv +  C\inttr |v - v_c|^{p} |f|^{q}\,dxdv,
\end{align*}
\begin{align*}
I_2^3 &\leq d\|\pa_{x_j}\psi\|_{L^\infty}\inttr  |v - v_c|^{p} |\pa_{x_i} \pa_{x_j} f|^{q-1}|\pa_{x_i}f|\,dxdv\cr
&\leq C \inttr |v - v_c|^{p} |\pa_{x_i} \pa_{x_j} f|^{q}\,dxdv +  C\inttr |v - v_c|^{p} |\pa_{x_i}f|^{q}\,dxdv,
\end{align*}
\begin{align*}
I_2^4 &\leq (\|\pa_{x_j}u\|_{L^\infty}+ C)\inttr  |v - v_c|^{p} |\pa_{x_i} \pa_{x_j} f|^{q-1}|\nabla_v\pa_{x_i}f|\,dxdv\cr
&\leq (\|\pa_{x_j}u\|_{L^\infty}+ C) \inttr |v - v_c|^{p} |\pa_{x_i} \pa_{x_j} f|^{q}\,dxdv 
%\cr
%&\quad 
+  (\|\pa_{x_j}u\|_{L^\infty}+ C)\inttr |v - v_c|^{p} |\nabla_v\pa_{x_i}f|^{q}\,dxdv,
\end{align*}
\begin{align*}
I_2^5 &\leq d\|\pa_{x_i}\psi\|_{L^\infty}\inttr  |v - v_c|^{p} |\pa_{x_i} \pa_{x_j} f|^{q-1}|\pa_{x_j}f|\,dxdv\cr
&\leq C \inttr |v - v_c|^{p} |\pa_{x_i} \pa_{x_j} f|^{q}\,dxdv +  C\inttr |v - v_c|^{p} |\pa_{x_j}f|^{q}\,dxdv,
\end{align*}
\begin{align*}
I_2^6 &\leq (\|\pa_{x_i}u\|_{L^\infty}+ C)\inttr  |v - v_c|^{p} |\pa_{x_i} \pa_{x_j} f|^{q-1}|\nabla_v\pa_{x_j}f|\,dxdv\cr
&\leq (\|\pa_{x_i}u\|_{L^\infty}+ C) \inttr |v - v_c|^{p} |\pa_{x_i} \pa_{x_j} f|^{q}\,dxdv
%\cr
%&\quad 
+  (\|\pa_{x_i}u\|_{L^\infty}+ C)\inttr |v - v_c|^{p} |\nabla_v\pa_{x_j}f|^{q}\,dxdv,
\end{align*}
\[
I_2^7 \leq d(1 + \|\psi\|_{L^\infty})\inttr |v - v_c|^{p} |\pa_{x_i} \pa_{x_j} f|^{q}\,dxdv,
\]
\begin{align*}
I_2^8 &=-\frac{1}{q}\inttr |v - v_c|^{p}F \cdot \nabla_v |\pa_{x_i}\pa_{x_j} f|^{q}\,dxdv
%\cr
%&
=\frac{1}{q}\inttr \nabla_v \cdot( |v - v_c|^{p}F) |\pa_{x_i}\pa_{x_j} f|^{q}\,dxdv\cr
&\leq -\frac{p\psi_m M_0}{q} \inttr |v - v_c|^{p}|\pa_{x_i}\pa_{x_j} f|^{q}\,dxdv + C_{p,q} e^{-Cp t}\inttr |\pa_{x_i}\pa_{x_j} f|^{q}\,dxdv\cr
&\quad -\frac {p(1 - C\delta)}{q} \inttr |v - v_c|^{p}|\pa_{x_i}\pa_{x_j} f|^{q}\,dxdv + C_{p,q}\|u - u_c\|_{L^\infty}^{p} \inttr |\pa_{x_i}\pa_{x_j} f|^{q}\,dxdv\cr
&\quad  - \frac{d}{q}\inttr (1 + \psi \star \rho_f)  |v - v_c|^{p}  |\pa_{x_i}\pa_{x_j} f|^{q} \,dxdv.
\end{align*}
This yields
\begin{align*}
&\frac{d}{dt} \inttr |v - v_c|^{p} |\pa_{x_i} \pa_{x_j} f|^{q}\,dxdv\cr
&\quad \leq (C+\|\pa_{x_j}u\|_{L^\infty}) \inttr |v - v_c|^{p}\lt( |\pa_{x_i} \pa_{x_j} f|^{q} +  |\nabla_v\pa_{x_i}f|^{q}+  |\nabla_v\pa_{x_j}f|^{q}\rt)dxdv \cr
&\qquad + C\inttr |v - v_c|^{p} \lt(|f|^{q} + |\pa_{x_i} f|^{q}+ |\pa_{x_j} f|^{q}\rt)dxdv\cr
&\qquad +  C\lt(1 + \norm{\nabla^2_x u}^{\frac{q^2}{d}}_{L^\infty(0,t;L^q)}\rt)\inttr |v - v_c|^{p} |\nabla_v f|^{q}\,dxdv +  Ce^{-Cpt} \inttr  |\pa_{x_i} \pa_{x_j} f|^{q}\,dxdv\cr
&\qquad -p\psi_mM_0 \inttr |v - v_c(t)|^{p}|\pa_{x_i}\pa_{x_j} f|^{q}\,dxdv -p \inttr |v - v_c|^{p}|\pa_{x_i}\pa_{x_j} f|^{q}\,dxdv\cr
&\qquad  - d\inttr (1 + \psi \star \rho_f)  |v - v_c|^{p}  |\pa_{x_i}\pa_{x_j} f|^{q} \,dxdv,
\end{align*}
and this subsequently implies
\begin{align*}
&\frac{d}{dt} \inttr |v - v_c|^{p} |\nabla_x^2 f|^{q}\,dxdv\cr
&\quad \leq (C+\|\nabla_x u\|_{L^\infty}) \inttr |v - v_c|^{p}\lt( |\nabla_x^2 f|^{q} +  |\nabla_v\nabla_x f|^{q} \rt)dxdv 
%\cr
%&\qquad 
+ C\inttr |v - v_c|^{p} \lt(|f|^{q} + |\nabla_x f|^{q}\rt)dxdv\cr
&\qquad +  C\lt(1 + \norm{\nabla^2_x u}^{\frac{q^2}{d}}_{L^\infty(0,t;L^q)}\rt)\inttr |v - v_c|^{p} |\nabla_v f|^{q}\,dxdv +  Ce^{-Cpt} \inttr  |\nabla_x^2 f|^{q}\,dxdv\cr
&\qquad -p\psi_m M_0\inttr |v - v_c|^{p}|\nabla_x^2 f|^{q}\,dxdv -p \inttr |v - v_c|^{p}|\nabla_x^2 f|^{q}\,dxdv\cr
&\qquad  - d\inttr (1 + \psi \star \rho_f)  |v - v_c|^{p}  |\nabla_x^2 f|^{q} \,dxdv,
\end{align*}
where $C>0$ is independent of $t$. \newline

{\bf Step B.-} We next estimate
\begin{align*}
&\frac{1}{q}\frac{d}{dt}\inttr |v - v_c|^{p} |\pa_{x_i} \pa_{v_j} f|^{q}\,dxdv
= -\frac pq \inttr |v - v_c|^{p-2}(v - v_c) \cdot v_c'  |\pa_{x_i} \pa_{v_j} f|^{q}\,dxdv \cr
&\qquad + \inttr |v - v_c|^{p} |\pa_{x_i} \pa_{v_j} f|^{q-2} \pa_{x_i} \pa_{v_j} f \,\pa_t \pa_{x_i} \pa_{v_j} f\,dxdv
%\cr
%&
=: J_1 + J_2,
\end{align*}
where $J_1$ can be easily estimated as
\[
J_1 \leq \frac{p}{4q} \inttr |v - v_c|^{p} |\pa_{x_i} \pa_{v_j} f|^{q}\,dxdv + Ce^{-Cpt} \inttr  |\pa_{x_i} \pa_{v_j} f|^{q}\,dxdv.
\]
For the estimate of $J_2$, we find
\begin{align*}
J_2 &= -\inttr |v - v_c|^{p} |\pa_{x_i} \pa_{v_j} f|^{q-2} \pa_{x_i} \pa_{v_j} f \lt(  \pa_{x_i} \pa_{x_j} f + (\pa_{x_i} \nabla_v \cdot F) \pa_{v_j} f  \rt) \,dxdv\cr
&\quad  -\inttr |v - v_c|^{p} |\pa_{x_i} \pa_{v_j} f|^{q-2} \pa_{x_i} \pa_{v_j} f \lt((\nabla_v \cdot F)\pa_{x_i} \pa_{v_j} f  + \pa_{x_i} \pa_{v_j} F \cdot \nabla_v f \rt) \,dxdv\cr
&\quad  -\inttr |v - v_c|^{p} |\pa_{x_i} \pa_{v_j} f|^{q-2} \pa_{x_i} \pa_{v_j} f \lt( \pa_{v_j} F \cdot \nabla_v \pa_{x_i} f + \pa_{x_i} F \cdot \nabla_v \pa_{v_j} f  \rt) \,dxdv\cr
&\quad  -\inttr |v - v_c|^{p} |\pa_{x_i} \pa_{v_j} f|^{q-2} \pa_{x_i} \pa_{v_j} f \lt( F \cdot \nabla_v \pa_{x_i} \pa_{v_j} f  \rt) \,dxdv
\cr
&
=: \sum_{i=1}^7 J_2^i.
\end{align*}
We then estimate each $J_2^i,i=1,\dots,7$ as follows:
\begin{align*}
J_2^1 &
\leq  \inttr |v - v_c|^{p} |\pa_{x_i} \pa_{v_j} f|^{q-1}|\pa_{x_i}\pa_{x_j}f|
\cr
&
\leq \inttr |v - v_c|^{p} |\pa_{x_i} \pa_{v_j} f|^{q}\,dxdv + \inttr |v - v_c|^{p} |\pa_{x_i}\pa_{x_j}f|^{q},
\end{align*}
\begin{align*}
J_2^2 &\leq d\|\pa_{x_i}\psi\|_{L^\infty} \inttr |v - v_c|^{p} |\pa_{x_i} \pa_{v_j} f|^{q-1}|\pa_{v_j}f|
\cr
&
\leq C\inttr |v - v_c|^{p} |\pa_{x_i} \pa_{v_j} f|^{q}\,dxdv + C\inttr |v - v_c|^{p} |\pa_{v_j}f|^{q},
\end{align*}
\[
J_2^3 \leq  d(1 + \|\psi\|_{L^\infty})\inttr |v - v_c|^{p} |\pa_{x_i} \pa_{v_j} f|^{q},
\]
\begin{align*}
J_2^4 &\leq \|\pa_{x_i}\psi\|_{L^\infty} \inttr |v - v_c|^{p} |\pa_{x_i} \pa_{v_j} f|^{q-1}|\nabla_v f|\cr
&\leq C\inttr |v - v_c|^{p} |\pa_{x_i} \pa_{v_j} f|^{q}+ C\inttr |v - v_c|^{p} |\nabla_v f|^{q},
\end{align*}
\begin{align*}
J_2^5 &\leq (1 + \|\psi\|_{L^\infty}) \inttr |v - v_c|^{p} |\pa_{x_i} \pa_{v_j} f|^{q-1}|\nabla_v \pa_{x_i}f|\cr
&\leq C\inttr |v - v_c|^{p} |\pa_{x_i} \pa_{v_j} f|^{q}\,dxdv + C\inttr |v - v_c|^{p} |\nabla_v \pa_{x_i} f|^{q},
\end{align*}
\begin{align*}
J_2^6 &\leq (C+\|\pa_{x_i} u\|_{L^\infty}) \inttr |v - v_c|^{p} |\pa_{x_i} \pa_{v_j} f|^{q-1}|\nabla_v \pa_{v_j}f|\cr
&\leq (C+\|\pa_{x_i} u\|_{L^\infty})\inttr |v - v_c|^{p} |\pa_{x_i} \pa_{v_j} f|^{q}
\cr
&\quad 
+ (C+\|\pa_{x_i} u\|_{L^\infty})\inttr |v - v_c|^{p} |\nabla_v \pa_{v_j} f|^{q},
\end{align*}
and
\begin{align*}
J_2^7 &=-\frac{1}{q}\inttr |v - v_c|^{p}F \cdot \nabla_v |\pa_{x_i}\pa_{v_j} f|^{q}=\frac{1}{q}\inttr \nabla_v \cdot( |v - v_c|^{p}F) |\pa_{x_i}\pa_{v_j} f|^{q}\cr
&\leq -\frac{p\psi_m M_0}{q} \inttr |v - v_c|^{p}|\pa_{x_i}\pa_{v_j} f|^{q} + C_{p,q} e^{-Cp t}\inttr |\pa_{x_i}\pa_{v_j} f|^{q}\cr
&\quad -\frac {p(1 - C\delta)}{q} \inttr |v - v_c|^{p}|\pa_{x_i}\pa_{v_j} f|^{q}+ C_{p,q}\|u - u_c\|_{L^\infty}^{p} \inttr |\pa_{x_i}\pa_{v_j} f|^{q}\cr
&\quad - \frac{d}{q}\inttr (1 + \psi \star \rho_f)  |v - v_c|^{p}  |\pa_{x_i}\pa_{v_j} f|^{q}.
\end{align*}
This gives
\begin{align*}
&\frac{d}{dt}\inttr |v - v_c|^{p} |\pa_{x_i} \pa_{v_j} f|^{q}\,dxdv\cr
&\quad \leq (C+\|\pa_{x_i} u\|_{L^\infty}) \inttr |v - v_c|^{p} |\pa_{x_i} \pa_{v_j} f|^{q}\,dxdv + \inttr |v - v_c|^{p} |\pa_{x_i}\pa_{x_j}f|^{q}\,dxdv,\cr
&\qquad + C\inttr |v - v_c|^{p} |\nabla_v f|^{q}\,dxdv + C\inttr |v - v_c|^{p} |\nabla_v \pa_{x_i} f|^{q}\,dxdv\cr
&\qquad  + (C+\|\pa_{x_i} u\|_{L^\infty})\inttr |v - v_c|^{p} |\nabla_v \pa_{v_j} f|^{q}\,dxdv -\frac{p(1 + \psi_m - C\delta)}{q} \int |v - v_c|^{p}|\pa_{x_i}\pa_{v_j} f|^{q}\,dxdv \cr
&\qquad + C_{p,q} e^{-Cp t}\inttr |\pa_{x_i}\pa_{v_j} f|^{q}\,dxdv - \frac{d}{q}\inttr (1 + \psi \star \rho_f)  |v - v_c|^{p}  |\pa_{x_i}\pa_{v_j} f|^{q} \,dxdv,
\end{align*}
and this further yields
\begin{align*}
&\frac{d}{dt}\inttr |v - v_c|^{p} |\nabla_x \nabla_v f|^{q}\,dxdv\cr
&\quad \leq (C+\|\nabla_x u\|_{L^\infty}) \inttr |v - v_c|^{p} |\nabla_x \nabla_v f|^{q}\,dxdv + \inttr |v - v_c|^{p} |\nabla_x^2 f|^{q}\,dxdv\cr
&\qquad + C\inttr |v - v_c|^{p} |\nabla_v f|^{q}\,dxdv + (C+\|\nabla_x u\|_{L^\infty})\inttr |v - v_c|^{p} |\nabla_v^2 f|^{q}\,dxdv\cr
&\qquad - p\psi_m M_0\inttr |v - v_c|^{p}|\nabla_x \nabla_v f|^{q}\,dxdv + C_{p,q} e^{-Cp t}\inttr |\nabla_x \nabla_vf|^{q}\,dxdv\cr
&\qquad - p \inttr |v - v_c|^{p}|\nabla_x \nabla_v f|^{q}\,dxdv  - d\inttr (1 + \psi \star \rho_f)  |v - v_c|^{p}  |\nabla_x \nabla_v f|^{q} \,dxdv,
\end{align*}
where $C>0$ is independent of $t$. \newline

{\bf Step C.-} We now estimate
\begin{align*}
\frac{1}{q}\frac{d}{dt}\inttr |v - v_c|^{p} |\pa_{v_i} \pa_{v_j} f|^{q}\,dxdv
&= -\frac pq \inttr |v - v_c|^{p-2}(v - v_c) \cdot v_c'  |\pa_{v_i} \pa_{v_j} f|^{q}\,dxdv \cr
&\quad + \inttr |v - v_c|^{p} |\pa_{v_i} \pa_{v_j} f|^{q-2} \pa_{v_i} \pa_{v_j} f \pa_t \pa_{v_i} \pa_{v_j} f\,dxdv
\cr
&
=: K_1 + K_2,
\end{align*}
where $K_1$ can be easily estimated as
\[
K_1 \leq \frac{p}{4q} \inttr |v - v_c|^{p} |\pa_{v_i} \pa_{v_j} f|^{q}\,dxdv + Ce^{-Cpt} \inttr  |\pa_{v_i} \pa_{v_j} f|^{q}\,dxdv.
\]
For the estimate of $K_2$, we find
\begin{align*}
K_2 &= -\inttr |v - v_c|^{p} |\pa_{v_i} \pa_{v_j} f|^{q-2} \pa_{v_i} \pa_{v_j} f \lt( \pa_{v_i}\pa_{x_j}f + \pa_{x_i}\pa_{v_j}f   \rt) \,dxdv\cr
&\quad -  \inttr |v - v_c|^{p} |\pa_{v_i} \pa_{v_j} f|^{q-2} \pa_{v_i} \pa_{v_j} f \lt(   (\nabla_v \cdot F) \pa_{v_i}\pa_{v_j}f + \pa_{v_j} F \cdot \nabla_v \pa_{v_i} f  \rt) \,dxdv\cr
&\quad -  \inttr |v - v_c|^{p} |\pa_{v_i} \pa_{v_j} f|^{q-2} \pa_{v_i} \pa_{v_j} f \lt( \pa_{v_i} F \cdot \nabla_v \pa_{v_j} f + F \cdot \nabla_v \pa_{v_i}\pa_{v_j} f    \rt) \,dxdv
\cr
&
=: \sum_{i=1}^6 K_2^i.
\end{align*}
We estimate $K_2^i$. $i=1,\dots, 6$, separately.
\begin{align*}
K_2^1 + K_2^2 &\leq \inttr |v - v_c|^{p} |\pa_{v_i} \pa_{v_j} f|^{q-1}\lt( |\pa_{v_i} \pa_{x_j} f| + |\pa_{x_i} \pa_{v_j} f| \rt) dxdv\cr
&\leq  \inttr |v - v_c|^{p} |\pa_{v_i} \pa_{v_j} f|^{q}\,dxdv +  \inttr |v - v_c|^{p} |\pa_{v_i} \pa_{x_j} f|^{q}\,dxdv
\cr
&\quad 
+  \inttr |v - v_c|^{p} |\pa_{x_i} \pa_{v_j} f|^{q}\,dxdv,
\end{align*}
\[
K_2^3 \leq d(1 + \|\psi\|_{L^\infty})  \inttr |v - v_c|^{p} |\pa_{v_i} \pa_{v_j} f|^{q}\,dxdv,
\]
\begin{align*}
K_2^4 + K_2^5 &\leq (1 + \|\psi\|_{L^\infty})\inttr |v - v_c|^{p} |\pa_{v_i} \pa_{v_j} f|^{q-1}\lt( |\nabla_v \pa_{v_i} f| + |\nabla_v \pa_{v_j} f| \rt) dxdv\cr
&\leq  C\inttr |v - v_c|^{p} |\pa_{v_i} \pa_{v_j} f|^{q}\,dxdv + C\inttr |v - v_c|^{p} |\nabla_v \pa_{v_i} f|^{q}\,dxdv
\cr
&\quad 
+  C\inttr |v - v_c|^{p} |\nabla_v \pa_{v_j} f|^{q}\,dxdv,
\end{align*}
and
\begin{align*}
K_2^6 &= -\frac{1}{q}\inttr |v - v_c|^{p}F \cdot \nabla_v |\pa_{v_i}\pa_{v_j} f|^{q}\,dxdv\cr
&=\frac{1}{q}\inttr \nabla_v \cdot( |v - v_c|^{p}F) |\pa_{v_i}\pa_{v_j} f|^{q}\,dxdv\cr
&\leq -\frac{p\psi_m M_0}{q} \inttr |v - v_c|^{p}|\pa_{v_i}\pa_{v_j} f|^{q}\,dxdv + C_{p,q} e^{-Cp t}\inttr |\pa_{v_i}\pa_{v_j} f|^{q}\,dxdv\cr
&\quad -\frac {p(1 - C\delta)}{q} \inttr |v - v_c|^{p}|\pa_{v_i}\pa_{v_j} f|^{q}\,dxdv + C_{p,q}\|u - u_c\|_{L^\infty}^{p} \inttr |\pa_{v_i}\pa_{v_j} f|^{q}\,dxdv\cr
&\quad - \frac{d}{q}\inttr (1 + \psi \star \rho_f)  |v - v_c|^{p}  |\pa_{v_i}\pa_{v_j} f|^{q} \,dxdv.
\end{align*}
Thus by choosing $\delta >0$ small enough we get
\begin{align*}
&\frac{d}{dt}\inttr |v - v_c|^{p} |\pa_{v_i} \pa_{v_j} f|^{q}\,dxdv\cr
&\quad \leq  C\inttr |v - v_c|^{p} |\pa_{v_i} \pa_{v_j} f|^{q}\,dxdv +  \inttr |v - v_c|^{p} |\pa_{v_i} \pa_{x_j} f|^{q}\,dxdv \cr
&\qquad +  \inttr |v - v_c|^{p} |\pa_{x_i} \pa_{v_j} f|^{q}\,dxdv   + C\inttr |v - v_c|^{p} |\nabla_v \pa_{v_i} f|^{q}\,dxdv\cr
&\qquad  +  C\inttr |v - v_c|^{p} |\nabla_v \pa_{v_j} f|^{q}\,dxdv - p(1 + \psi_m M_0 - C\delta)\int |v - v_c|^{p}|\pa_{v_i}\pa_{v_j} f|^{q}\,dxdv \cr
&\qquad + C_{p,q} e^{-Cp t}\inttr |\pa_{v_i}\pa_{v_j} f|^{q}\,dxdv -d\inttr (1 + \psi \star \rho_f)  |v - v_c|^{p}  |\pa_{v_i}\pa_{v_j} f|^{q} \,dxdv,
\end{align*}
and this yields
\begin{align*}
&\frac{d}{dt}\inttr |v - v_c|^{p} |\nabla_v^2 f|^{q}\,dxdv
%\cr
%&\quad 
\leq  C\inttr |v - v_c|^{p} |\nabla_v^2 f|^{q}\,dxdv +  \inttr |v - v_c|^{p} |\nabla_x \nabla_v f|^{q}\,dxdv\cr
&\qquad - p(1 + \psi_mM_0 - C\delta)\inttr |v - v_c|^{p}|\nabla_v^2 f|^{q}\,dxdv + C_{p,q} e^{-Cp t}\inttr |\nabla_v^2 f|^{q}\,dxdv\cr
&\qquad    - d\inttr (1 + \psi \star \rho_f)  |v - v_c|^{p}  |\nabla_v^2 f|^{q} \,dxdv,
\end{align*}
where $C>0$ is independent of $t$. \newline

{\bf Step D.-} We now combine all of the above estimates to have
\begin{align*}
&\frac{d}{dt}\inttr |v - v_c|^{p} \lt(|\nabla_x^2 f|^{q} + |\nabla_x \nabla_v f|^{q} + |\nabla_v^2 f|^{q}\rt)\,dxdv\cr
&\quad \leq (C+\|\nabla_x u\|_{L^\infty}) \inttr |v - v_c|^{p}\lt( |\nabla_x^2 f|^{q} +  |\nabla_v\nabla_x f|^{q} + |\nabla_v^2 f|^{q}\rt)dxdv \cr
&\qquad + C\inttr |v - v_c|^{p} \lt(|f|^{q} + |\nabla_x f|^{q} + |\nabla_v f|^{q} \rt)dxdv \cr
&\qquad +  C(1 + \norm{\nabla^2_x u}^{\frac{q^2}{d}}_{L^q_x L^{\infty}_t})\inttr |v - v_c|^{p} |\nabla_v f|^{q}\,dxdv \cr
&\qquad +  Ce^{-Cpt} \inttr   \lt(|\nabla_x^2 f|^{q} + |\nabla_x \nabla_v f|^{q} + |\nabla_v^2 f|^{q}\rt)\,dxdv\cr
&\qquad -(p(\psi_m M_0 +1 - C\delta)+C)\inttr |v - v_c(t)|^{p} \lt(|\nabla_x^2 f|^{q} + |\nabla_x \nabla_v f|^{q} + |\nabla_v^2 f|^{q}\rt) dxdv,
\end{align*}
and this together with Propositions \ref{prop_fw1}, \ref{prop_fw1_d3},  \ref{thm_decay_w1}, and \eqref{nabla-u-est} yields
\begin{align*}
&\frac{d}{dt}\inttr |v - v_c|^{p} \lt(|\nabla_x^2 f|^{q} + |\nabla_x \nabla_v f|^{q} + |\nabla_v^2 f|^{q}\rt)\,dxdv\cr
&\quad \leq (C+\|\nabla_x u\|_{L^\infty}-(p(\psi_m M_0 +1- C\delta)) \inttr |v - v_c|^{p}\lt( |\nabla_x^2 f|^{q} +  |\nabla_v\nabla_x f|^{q} + |\nabla_v^2 f|^{q}\rt)dxdv\cr
&\qquad + Ce^{-Cpt},
\end{align*}
where $C>0$ is independent of $t$. We finally use almost the same argument as in the proof of Proposition \ref{thm_decay_w1} to conclude the desired exponential decay estimate.
\end{mainthm}

 %%%%%%%%%%%%%%%%%%%%%%%%%%%%%%%%%%%%%%%
%
%
% \appendix
%
%%%%%%%%%%%%%%%%%%%%%%%%%%%%%%%%%%%%%%%

 \section*{Acknowledgments}
Y.-P. Choi's work is supported by NRF-2017R1C1B2012918, POSCO Science Fellowship of POSCO TJ Park Foundation, and Yonsei University Research Fund of 2019-22-021.
K. Kang's work is partially supported by NRF-2019R1A2C1084685
 and NRF-2015R1A5A1009350. H. K. Kim's work is supported by 
NRF-2018R1D1A1B07049357.
J.-M. Kim's work is supported by NRF-2020R1C1C1A01006521 and a Research Grant of Andong National University.

%%%%%%%%%%%%%%%%%%%%%%%%%%%%%%%%%%%%%%%
%
%
% \appendix
%
%%%%%%%%%%%%%%%%%%%%%%%%%%%%%%%%%%%%%%%

\appendix

\section{Proof of Lemma \ref{lem_hk}}\label{Lemma33}

In this part, we provide the details on the proof of Lemma \ref{lem_hk}.

We first estimate
\[
\lt|\Gamma(x,t) - \frac{1}{(2\pi)^d}\rt|\leq C\sum_{|\xi| \geq 1} e^{-t|\xi|^2}
\leq C\int_1^\infty \int_{\pa B(0,r)} e^{-r^2 t} \,dSdr
\leq C\int_1^\infty r^{d-1} e^{-r^2t}\,dr,
\]
where $C>0$ is independent of $t$. If $d=2$, we easily get
\[
\int_1^\infty re^{-r^2 t}\,dr = \frac{e^{-t}}{2t}.
\]
For $d=3$, we obtain
\[
\int_1^\infty r^2 e^{-r^2 t}\,dr = \frac{1}{2t}\int_1^\infty  r (2rt) e^{-r^2 t}\,dr = \frac{1}{2t} e^{-t} + \frac1{2t} \int_1^\infty e^{-r^2 t}\,dr.
\]
On the other hand, we find
\[
\frac1{2t} \int_1^\infty e^{-r^2 t}\,dr = \frac{1}{2t^{3/2}} \int_{\sqrt t}^\infty e^{-x^2}\,dx \le  \frac{e^{-t}}{t^{3/2}}.
\]
Indeed, in case that $t\ge 1$, it is direct that
\[
\int_{\sqrt t}^\infty e^{-x^2}\,dx\le \int_{\sqrt t}^\infty 2x e^{-x^2}=e^{-t}.
\]
In case that $t<1$, the integral is bounded since 
\[
\int_{\sqrt t}^\infty e^{-x^2}\,dx\le \int_{0}^\infty  e^{-x^2}=\sqrt{\pi}.
\]
This gives
\[
\frac{1}{2t^{3/2}}\int_{\sqrt t}^\infty e^{-x^2}\,dx\le \frac{1}{2t^{3/2}}\bke{e^{-t}\chi_{\bket{t\ge 1}}+ \sqrt{\pi}\chi_{\bket{t<1}}}\le C\frac{e^{-t}}{t^{3/2}}.
\]
Note that the last integral is uniformly bounded in $t$. Thus we have
\[
\lt\|\Gamma(\cdot,t) - \frac{1}{(2\pi)^d}\rt\|_{L^\infty} \leq \left\{ \begin{array}{ll}
\displaystyle Ct^{-1}e^{-t} & \textrm{if $d=2$,}\\[2mm]
\displaystyle C\bke{t^{-1} + t^{-\frac{3}{2}}}e^{-t} & \textrm{if $d=3$,}
 \end{array} \right.
\]
where $C>0$ is independent of $t$. Finally we use the $L^p$-interpolation inequality together with the fact
\[
\|\Gamma(\cdot,t)\|_{L^1} = \frac{1}{(4\pi t)^{d/2}} \sum_{\xi \in \Z^d} \intt  e^{-\frac{|x- 2\pi \xi|^2}{4t}}\,dx =  \frac{1}{(4\pi t)^{d/2}} \int_{\R^d}  e^{-\frac{|x|^2}{4t}}\,dx = 1 \quad \forall\, t \geq 0
\]
to conclude the first assertion. Similarly, we also estimate
\begin{align*}
\lt|\nabla_x\Gamma(x,t) \rt| &\leq \lt|\frac{1}{(2\pi)^d} \sum_{\xi \in \Z^d \setminus \{0\}} |\xi| e^{-t|\xi|^2 + i \xi \cdot x} \rt| \leq C\sum_{|\xi| \geq 1} |\xi| e^{-t|\xi|^2} \leq C\int_1^\infty r^d e^{-r^2t}\,dr.
\end{align*}
Thus for $d=2$ we obtain from the above that
\[
\|\nabla_x \Gamma(\cdot,t)\|_{L^\infty} \leq C\bke{t^{-1} + t^{-\frac{3}{2}}}e^{-t},
\]
where $C>0$ is independent of $t$. For $d=3$, we use the change of variable and integration by parts to get
\[
\int_1^\infty r^3 e^{-r^2t}\,dr = \frac12\int_1^\infty re^{-rt}\,dr = \frac{e^{-t}}{2t}\lt(1 + \frac1t \rt).
\]
This implies
\begin{equation}\label{april03-30}
\|\nabla_x \Gamma(\cdot,t)\|_{L^\infty} \leq
C\bke{t^{-1} + t^{-\frac{d+1}{2}}}e^{-t}.
\end{equation}
On the other hand, we have
\begin{align}\label{april03-40}
\begin{aligned}
\intt |\nabla_x \Gamma(x,t)|\,dx &\leq \frac{1}{(4\pi t)^{d/2}} \sum_{\xi \in \Z^d} \intt  \frac{|x- 2\pi \xi|}{2t}e^{-\frac{|x- 2\pi \xi|^2}{4t}}\,dx\cr
&\leq   \frac{C}{t^{d/2 + 1}} \int_{\R^d}  |x| e^{-\frac{|x|^2}{4t}}\,dx\le Ct^{-\frac12} \int_0^\infty r^d e^{-r^2}\,dr\leq Ct^{-\frac12}.
\end{aligned}
\end{align}
Interpolating estimates \eqref{april03-30} and  \eqref{april03-40}, it follows that
\[
\|\nabla_x \Gamma(\cdot,t)\|_{L^p} \leq \|\nabla_x \Gamma(\cdot,t)\|_{L^\infty}^{1-\frac1p}\|\nabla_x \Gamma(\cdot,t)\|_{L^1}^{\frac1p}\leq C \lt(t^{-\lt(1 - \frac{1}{2p}\rt)} + t^{-\frac d2\lt( 1 - \frac1p\rt) - \frac12} \rt)e^{-t\lt(1 - \frac1p\rt)}.
\]
\begin{remark} If we assume higher regularity for the initial vorticity $\omega_0$, for instance $\omega_0 \in \dot H^s(\T^d)$ with $s > d/2$, then we obtain
\[
\|(\Gamma \star \omega_0)(\cdot,t)\|_{L^p} \leq C\|\omega_0\|_{\dot H^s} e^{-t/2} \quad \mbox{for} \quad 1 \leq p \leq \infty \quad \mbox{and} \quad t \geq 0,
\]
where $C>0$ is independent of $p$ and $t$. Indeed, it follows from \eqref{sol_homo} that
\[
(\Gamma \star \omega_0)(x,t) = \sum_{\xi \in \Z^d \setminus \{0\}} e^{-t|\xi|^2 + i \xi \cdot x}c_\mf(\omega_0)(\xi),
\]
where we used the fact that $\intt\omega_0\,dx=0$. Then we estimate the right hand side of the above equality as
\begin{align*}
\lt|\sum_{\xi \in \Z^d \setminus \{0\}} e^{-t|\xi|^2 + i \xi \cdot x}c_\mf(\omega_0)(\xi) \rt| &=\lt|\sum_{\xi \in \Z^d \setminus \{0\}} \frac{e^{-t|\xi|^2 + i \xi \cdot x} }{|\xi|^s}|\xi|^s c_\mf(\omega_0)(\xi) \rt|\cr
&\leq \lt(\sum_{\xi \in \Z^d \setminus \{0\}} \frac{e^{-t|\xi|^2}}{|\xi|^{2s}} \rt)^{1/2}\lt(\sum_{\xi \in \Z^d \setminus \{0\}} |\xi|^{2s}|c_\mf(\omega_0)(\xi)|^2  \rt)^{1/2}\cr
&\leq e^{-t/2} \lt(\sum_{\xi \in \Z^d \setminus \{0\}} \frac1{|\xi|^{2s}} \rt)^{1/2}\|\omega_0\|_{\dot H^s}.
\end{align*}
Since the sum on the right hand side of the above inequality is bounded for $2s > d$, we conclude the desired result.

\end{remark}

\section{Proof of Lemma \ref{lem_f2}}\label{app_a}

In this appendix, we provide the details of proof of Lemma \ref{lem_f2}. \newline

($\diamond$ Estimate of $\|\nabla_x^2 f\|_{L^{q}}$): For $i,j=1,\dots,d$, we find from the kinetic equation in \eqref{main} that
\begin{align*}
\pa_t \pa_{x_i}\pa_{x_j} f + v \cdot \nabla_x \pa_{x_i} \pa_{x_j} f
&= - (\nabla_v \cdot \pa_{x_i} \pa_{x_j}F) f - \pa_{x_i} \pa_{x_j} F \cdot \nabla_v f - (\nabla_v \cdot \pa_{x_j}F) \pa_{x_i} f - \pa_{x_j} F \cdot \nabla_v \pa_{x_i} f\cr
&\quad - (\nabla_v \cdot \pa_{x_i} F) \pa_{x_j} f - \pa_{x_i} F \cdot \nabla_v \pa_{x_j} f - (\nabla_v \cdot F) \pa_{x_i} \pa_{x_j} f - F \cdot \nabla_v \pa_{x_i} \pa_{x_j}f.
\end{align*}
Then we get
\begin{align*}
&\frac{d}{dt} \inttr |\pa_{x_i}\pa_{x_j} f|^{q}\,dxdv
\cr
&\quad 
= q\inttr |\pa_{x_i}\pa_{x_j} f|^{q-2} \pa_{x_i}\pa_{x_j} f \pa_t \pa_{x_i}\pa_{x_j} f f\,dxdv\cr
&\quad =- q\inttr |\pa_{x_i}\pa_{x_j} f|^{q-2} \pa_{x_i}\pa_{x_j} f  \lt(  (\nabla_v \cdot \pa_{x_i} \pa_{x_j}F) f + \pa_{x_i} \pa_{x_j} F \cdot \nabla_v f\rt) dxdv\cr
&\qquad - q\inttr |\pa_{x_i}\pa_{x_j} f|^{q-2} \pa_{x_i}\pa_{x_j} f  \lt(  (\nabla_v \cdot \pa_{x_j}F) \pa_{x_i} f + \pa_{x_j} F \cdot \nabla_v \pa_{x_i} f\rt) dxdv\cr
&\qquad - q\inttr |\pa_{x_i}\pa_{x_j} f|^{q-2} \pa_{x_i}\pa_{x_j} f  \lt( (\nabla_v \cdot \pa_{x_i} F) \pa_{x_j} f + \pa_{x_i} F \cdot \nabla_v \pa_{x_j} f\rt) dxdv\cr
&\qquad - q\inttr |\pa_{x_i}\pa_{x_j} f|^{q-2} \pa_{x_i}\pa_{x_j} f  \lt( (\nabla_v \cdot F) \pa_{x_i} \pa_{x_j} f + F \cdot \nabla_v \pa_{x_i} \pa_{x_j}f\rt) dxdv
\cr
&\quad 
=: \sum_{i=1}^8 I_i,
\end{align*}
where $I_i,i=1,\dots,8$ can be estimated as follows:
\begin{align*}
& I_1 %&\leq q \|\nabla_v \cdot \pa_{x_i}\pa_{x_j} F\|_{L^\infty}\|\pa_{x_i}\pa_{x_j} f\|_{L^{q}}^{q-1}\|f\|_{L^{q}} 
\leq qd\|\pa_{x_i}\pa_{x_j}\psi\|_{L^\infty}  \|\pa_{x_i}\pa_{x_j} f\|_{L^{q}}^{q-1}\|f\|_{L^{q}},
%\cr
\qquad 
I_2   \leq q \inttr  |\pa_{x_i}\pa_{x_j} f|^{q-1} |\pa_{x_i}\pa_{x_j} F| |\nabla_v f|\,dxdv,\cr
& I_3 %&\leq q \|\nabla_v \cdot \pa_{x_j} F\|_{L^\infty}\|\pa_{x_i}\pa_{x_j} f\|_{L^{q}}^{q-1}\|\pa_{x_i}f\|_{L^{q}} 
\leq q d\|\pa_{x_j} \psi\|_{L^\infty}\|\pa_{x_i}\pa_{x_j} f\|_{L^{q}}^{q-1}\|\pa_{x_i}f\|_{L^{q}},\qquad
%\cr
I_4 \leq q\|\pa_{x_j} F\|_{L^\infty}\|\pa_{x_i}\pa_{x_j} f\|_{L^{q}}^{q-1}\|\nabla_v \pa_{x_i}f\|_{L^{q}},
\cr
&I_5\leq q d\|\pa_{x_i} \psi\|_{L^\infty}\|\pa_{x_i}\pa_{x_j} f\|_{L^{q}}^{q-1}\|\pa_{x_j}f\|_{L^{q}},\qquad
%\cr
I_6 \leq q\|\pa_{x_i} F\|_{L^\infty}\|\pa_{x_i}\pa_{x_j} f\|_{L^{q}}^{q-1}\|\nabla_v \pa_{x_j}f\|_{L^{q}},\cr
&I_7 = qd \inttr (1 + \psi \star \rho_f) |\pa_{x_i}\pa_{x_j} f|^{q}\,dxdv,\qquad
%\cr
I_8 = -d \inttr (1 + \psi \star \rho_f) |\pa_{x_i}\pa_{x_j} f|^{q}\,dxdv.
\end{align*}
Combining all of the above estimates, we have
\begin{align*}
&\frac{d}{dt} \inttr |\pa_{x_i}\pa_{x_j} f|^{q}\,dxdv 
%\cr
%&\quad 
\leq qd  \|\pa_{x_i}\pa_{x_j} f\|_{L^{q}}^{q-1}\lt( \|\pa_{x_i}\pa_{x_j}\psi\|_{L^\infty}\|f\|_{L^{q}} + \|\pa_{x_j} \psi\|_{L^\infty} \|\pa_{x_i}f\|_{L^{q}} \rt)\cr
&\qquad + q\|\pa_{x_i}\pa_{x_j} f\|_{L^{q}}^{q-1}\lt( \|\pa_{x_j} F\|_{L^\infty}\|\nabla_v \pa_{x_i}f\|_{L^{q}}+ d\|\pa_{x_i} \psi\|_{L^\infty} \|\pa_{x_j}f\|_{L^{q}} +\|\pa_{x_i} F\|_{L^\infty}\|\nabla_v \pa_{x_j}f\|_{L^{q}} \rt)\cr
&\qquad + q \inttr  |\pa_{x_i}\pa_{x_j} f|^{q-1} |\pa_{x_i}\pa_{x_j} F| |\nabla_v f|\,dxdv + (q-1)d\inttr (1 + \psi \star \rho_f) |\pa_{x_i}\pa_{x_j} f|^{q}\,dxdv.
\end{align*}
On the other hand, we estimate the second order derivative of the force field $F$ as
\begin{align*}
|\pa_{x_i}\pa_{x_j} F| &\leq |(\pa_{x_i}\pa_{x_j} \psi) \star \rho_f u_f| + |v \pa_{x_i}\pa_{x_j}\psi \star \rho_f| + |\pa_{x_i}\pa_{x_j}u|\cr
&\leq \|\pa_{x_i}\pa_{x_j} \psi\|_{L^\infty}\|\rho_f u_f\|_{L^1} + R_v(t) \|\pa_{x_i}\pa_{x_j} \psi\|_{L^\infty}\|\rho_f\|_{L^1}+ |\pa_{x_i}\pa_{x_j}u|\cr
&\leq C + |\pa_{x_i}\pa_{x_j}u|,
\end{align*}
and this implies
\begin{align*}
&\inttr  |\pa_{x_i}\pa_{x_j} f|^{q-1} |\pa_{x_i}\pa_{x_j}u| |\nabla_v f|\,dxdv\cr
&\quad \leq Cq \|\pa_{x_i}\pa_{x_j} f\|_{L^{q}}^{q-1}\|\nabla_v f\|_{L^{q}} +  q  \inttr  |\pa_{x_i}\pa_{x_j} f|^{q-1} |\pa_{x_i}\pa_{x_j}u| |\nabla_v f|\,dxdv\cr
&\quad \leq \|\nabla_v f\|_{L^\infty} \inttr  |\pa_{x_i}\pa_{x_j} f|^{q-1} |\pa_{x_i}\pa_{x_j}u|\,dxdv\cr
&\quad \leq C_q\|\nabla_v f\|_{W^{1,q}}  \|\pa_{x_i}\pa_{x_j} f\|_{L^{q}}^{q-1} \|\pa_{x_i}\pa_{x_j}u\|_{L^{q}}
\end{align*}
for $q > d$, where we used the Morrey's inequality. Thus we obtain
\begin{align*}
\frac{d}{dt} \|\pa_{x_i}\pa_{x_j} f\|_{L^{q}}
& \leq Cd  \lt( \|f\|_{L^{q}} +  \|\pa_{x_i}f\|_{L^{q}} \rt) + C( 1  + \|\nabla_x u\|_{L^\infty})(\|\nabla_v \pa_{x_i}f\|_{L^{q}} + \|\nabla_v \pa_{x_j}f\|_{L^{q}})\cr
&\quad + Cd \|\pa_{x_j}f\|_{L^{q}} + C \|\nabla_v f\|_{L^{q}} + C_q\|\nabla_v f\|_{W^{1,q}} \|\pa_{x_i}\pa_{x_j}u\|_{L^{q}}\cr
\end{align*}
due to \eqref{est_F}. This asserts
\begin{align*}
\frac{d}{dt}\|\nabla_x^2 f\|_{L^{q}} &\leq Cd\lt( \|f\|_{L^{q}} +  \|\nabla_x f\|_{L^{q}} \rt)+ C( 1  + \|\nabla_x u\|_{L^\infty})\|\nabla_v \nabla_x f\|_{L^{q}} \cr
&\quad + C\|\nabla_v f\|_{L^{q}}+ C_q\|\nabla_v f\|_{W^{1,q}} \|\nabla_x^2 u\|_{L^{q}}.
\end{align*}

($\diamond$ Estimate of $\|\nabla_x\nabla_v f\|_{L^{q}}$): For $i,j=1,\dots,d$, we find
\begin{align*}
\pa_t \pa_{x_i} \pa_{v_j}f + v \cdot \nabla_x \pa_{x_i} \pa_{v_j}f
&= -\pa_{x_i} \pa_{x_j} f - (\pa_{x_i} \nabla_v \cdot F) \pa_{v_j} f - (\nabla_v \cdot F)\pa_{x_i} \pa_{v_j} f - \pa_{x_i} \pa_{v_j} F \cdot \nabla_v f\cr
&\quad - \pa_{v_j} F \cdot \nabla_v \pa_{x_i} f - \pa_{x_i} F \cdot \nabla_v \pa_{v_j} f - F \cdot \nabla_v \pa_{x_i} \pa_{v_j} f.
\end{align*}
Then we have
\begin{align*}
&\frac{d}{dt}\inttr |\pa_{x_i} \pa_{v_j} f|^{q}\,dxdv 
%\cr
%&\quad 
= q \inttr |\pa_{x_i} \pa_{v_j} f|^{q-2} \pa_{x_i} \pa_{v_j} f \pa_t \pa_{x_i} \pa_{v_j} f\,dxdv\cr
& \quad = -q \inttr |\pa_{x_i} \pa_{v_j} f|^{q-2} \pa_{x_i} \pa_{v_j} f \lt( \pa_{x_i} \pa_{x_j} f + (\pa_{x_i} \nabla_v \cdot F) \pa_{v_j} f + (\nabla_v \cdot F)\pa_{x_i} \pa_{v_j} f\rt) dxdv\cr
& \qquad -q \inttr |\pa_{x_i} \pa_{v_j} f|^{q-2} \pa_{x_i} \pa_{v_j} f \lt(\pa_{x_i} \pa_{v_j} F \cdot \nabla_v f + \pa_{v_j} F \cdot \nabla_v \pa_{x_i} f + \pa_{x_i} F \cdot \nabla_v \pa_{v_j} f\rt) dxdv\cr
& \qquad -q \inttr |\pa_{x_i} \pa_{v_j} f|^{q-2} \pa_{x_i} \pa_{v_j} f \lt( F \cdot \nabla_v \pa_{x_i} \pa_{v_j} f\rt) dxdv
\cr
&\quad 
=: \sum_{i=1}^7 J_i.
\end{align*}
We estimate
\begin{align*}
J_1 &\leq q \|\pa_{x_i} \pa_{v_j} f\|_{L^{q}}^{q-1} \|\pa_{x_i} \pa_{x_j} f\|_{L^{q}},\qquad %\cr
J_2 %\leq q\|\pa_{x_i} \nabla_v \cdot F\|_{L^\infty}\|\pa_{x_i} \pa_{v_j} f\|_{L^{q}}^{q-1} \|\pa_{v_j} f\|_{L^{q}} \cr
\leq qd\|\pa_{x_i} \psi\|_{L^\infty}\|\pa_{x_i} \pa_{v_j} f\|_{L^{q}}^{q-1} \|\pa_{v_j} f\|_{L^{q}},\cr
J_3 &= qd \inttr (1 + \psi \star \rho_f)|\pa_{x_i}\pa_{v_j}f|^{q}\,dxdv, \qquad %\cr
J_4 \leq q\|\pa_{x_i}\psi\|_{L^\infty}\|\pa_{x_i} \pa_{v_j} f\|_{L^{q}}^{q-1} \|\pa_{x_j} f\|_{L^{q}}, \cr
J_5 &= q\inttr (1 + \psi \star \rho_f)|\pa_{x_i}\pa_{v_j}f|^{q}\,dxdv,  \qquad %\cr
J_6 \leq q\|\pa_{x_i} F\|_{L^\infty}\|\pa_{x_i} \pa_{v_j} f\|_{L^{q}}^{q-1} \|\nabla_v \pa_{v_j}f\|_{L^{q}},\cr
J_7 &=-d\inttr (1 + \psi \star \rho_f)|\pa_{x_i}\pa_{v_j}f|^{q}\,dxdv.
\end{align*}
Thus we obtain
\begin{align*}
\frac{d}{dt}\|\pa_{x_i} \pa_{v_j} f\|_{L^{q}}^{q}
&\leq q \|\pa_{x_i} \pa_{v_j} f\|_{L^{q}}^{q-1} \|\pa_{x_i} \pa_{x_j} f\|_{L^{q}} + qd\|\pa_{x_i} \psi\|_{L^\infty}\|\pa_{x_i} \pa_{v_j} f\|_{L^{q}}^{q-1} \|\pa_{v_j} f\|_{L^{q}}\cr
&\quad + q\|\pa_{x_i}\psi\|_{L^\infty}\|\pa_{x_i} \pa_{v_j} f\|_{L^{q}}^{q-1} \|\pa_{x_j} f\|_{L^{q}} + q\|\pa_{x_i} F\|_{L^\infty}\|\pa_{x_i} \pa_{v_j} f\|_{L^{q}}^{q-1} \|\nabla_v \pa_{v_j}f\|_{L^{q}}\cr
&\quad + (q(d+1)-d)\inttr (1 + \psi \star \rho_f)|\pa_{x_i}\pa_{v_j}f|^{q}\,dxdv\cr
&\leq q \|\pa_{x_i} \pa_{v_j} f\|_{L^{q}}^{q-1} \|\pa_{x_i} \pa_{x_j} f\|_{L^{q}} + qd\|\pa_{x_i} \psi\|_{L^\infty}\|\pa_{x_i} \pa_{v_j} f\|_{L^{q}}^{q-1} \|\pa_{v_j} f\|_{L^{q}}\cr
&\quad + q\|\pa_{x_i}\psi\|_{L^\infty}\|\pa_{x_i} \pa_{v_j} f\|_{L^{q}}^{q-1} \|\pa_{x_j} f\|_{L^{q}} + q\|\pa_{x_i} F\|_{L^\infty}\|\pa_{x_i} \pa_{v_j} f\|_{L^{q}}^{q-1} \|\nabla_v \pa_{v_j}f\|_{L^{q}}\cr
&\quad + (q(d+1)-d)(1 + \|\psi\|_{L^\infty})\|\pa_{x_i}\pa_{v_j}f\|^{q}_{L^{q}},
\end{align*}
and this implies
\begin{align*}
\frac{d}{dt}\|\pa_{x_i} \pa_{v_j} f\|_{L^{q}}
&\leq \|\pa_{x_i} \pa_{x_j} f\|_{L^{q}} + d\|\pa_{x_i} \psi\|_{L^\infty}\|\pa_{v_j} f\|_{L^{q}} + \|\pa_{x_i}\psi\|_{L^\infty} \|\pa_{x_j} f\|_{L^{q}}\cr
&\quad + \|\pa_{x_i} F\|_{L^\infty} \|\nabla_v \pa_{v_j}f\|_{L^{q}} + \lt(\lt(d+1-\frac{d}{q}\rt)(1 + \|\psi\|_{L^\infty})\rt)\|\pa_{x_i}\pa_{v_j}f\|_{L^{q}}.
\end{align*}
This together with \eqref{est_F} yields
\begin{align*}
\frac{d}{dt}\|\nabla_x \nabla_v f\|_{L^{q}} &\leq \|\nabla_x^2 f\|_{L^{q}} + Cd\|\nabla_v f\|_{L^{q}} + C \|\nabla_x f\|_{L^{q}}\cr
&\quad + C(1 + \|\nabla_x u\|_{L^\infty}^2) \|\nabla_v^2 f\|_{L^{q}} + C\lt(d+1-\frac{d}{q}\rt)\|\nabla_x \nabla_v f\|_{L^{q}},
\end{align*}
where $C > 0$ depends on $R_v(0)$ and $\psi$, but independent of $t$ and $q$. \newline

($\diamond$ Estimate of $\|\nabla_v^2 f\|_{L^{q}}$): Note that $\pa_{v_i}\pa_{v_j}f $ satisfies
\begin{align*}
&\pa_t \pa_{v_i}\pa_{v_j}f + v \cdot \nabla_x \pa_{v_i}\pa_{v_j}f 
\cr
&\quad 
= - \pa_{v_i}\pa_{x_j}f - \pa_{x_i}\pa_{v_j}f - (\nabla_v \cdot F) \pa_{v_i}\pa_{v_j}f - \pa_{v_j} F \cdot \nabla_v \pa_{v_i} f - \pa_{v_i} F \cdot \nabla_v \pa_{v_j} f - F \cdot \nabla_v \pa_{v_i}\pa_{v_j} f.
\end{align*}
Thus we get
\begin{align*}
&\frac{d}{dt}\inttr |\pa_{v_i}\pa_{v_j} f|^{q}\,dxdv 
\cr
&\quad 
= q \inttr |\pa_{v_i}\pa_{v_j} f|^{q-2} \pa_{v_i}\pa_{v_j} f \pa_t \pa_{v_i}\pa_{v_j} f\,dxdv\cr
&\quad = -q \inttr |\pa_{v_i}\pa_{v_j} f|^{q-2} \pa_{v_i}\pa_{v_j} f \lt(\pa_{v_i}\pa_{x_j}f + \pa_{x_i}\pa_{v_j}f + (\nabla_v \cdot F) \pa_{v_i}\pa_{v_j}f \rt) dxdv\cr
&\qquad  -q \inttr |\pa_{v_i}\pa_{v_j} f|^{q-2} \pa_{v_i}\pa_{v_j} f \lt(\pa_{v_j} F \cdot \nabla_v \pa_{v_i} f+\pa_{v_i} F \cdot \nabla_v \pa_{v_j} f+ F \cdot \nabla_v \pa_{v_i}\pa_{v_j} f \rt) dxdv
\cr
&\quad 
=: \sum_{i=1}^6 K_i,
\end{align*}
where $K_i,i=1,\dots,6$ can be estimated as follows.
\begin{align*}
K_1 &\leq q\|\pa_{v_i}\pa_{v_j} f\|_{L^{q}}^{q-1}\|\pa_{v_i}\pa_{x_j}f\|_{L^{q}}, \qquad %\cr
K_2 \leq q\|\pa_{v_i}\pa_{v_j} f\|_{L^{q}}^{q-1}\|\pa_{v_j}\pa_{x_i}f\|_{L^{q}},\cr
K_3 &= qd\inttr (1 + \psi \star \rho_f)|\pa_{v_i}\pa_{v_j} f|^{q}\,dxdv, \qquad %\cr
K_4 = q\inttr (1 + \psi \star \rho_f)|\pa_{v_i}\pa_{v_j} f|^{q}\,dxdv,\cr
K_5 &= q\inttr (1 + \psi \star \rho_f)|\pa_{v_i}\pa_{v_j} f|^{q}\,dxdv,\qquad %\cr
K_6 = -d\inttr (1 + \psi \star \rho_f)|\pa_{v_i}\pa_{v_j} f|^{q}\,dxdv.
\end{align*}
This deduces
\begin{align*}
\frac{d}{dt}\|\pa_{v_i}\pa_{v_j} f\|_{L^{q}}^{q} &\leq q\|\pa_{v_i}\pa_{v_j} f\|_{L^{q}}^{q-1}\|\pa_{v_i}\pa_{x_j}f\|_{L^{q}} + q\|\pa_{v_i}\pa_{v_j} f\|_{L^{q}}^{q-1}\|\pa_{v_j}\pa_{x_i}f\|_{L^{q}}\cr
&\quad + (q(d+2)-d)\inttr (1 + \psi \star \rho_f)|\pa_{v_i}\pa_{v_j} f|^{q}\,dxdv\cr
&\leq q\|\pa_{v_i}\pa_{v_j} f\|_{L^{q}}^{q-1}\|\pa_{v_i}\pa_{x_j}f\|_{L^{q}} + q\|\pa_{v_i}\pa_{v_j} f\|_{L^{q}}^{q-1}\|\pa_{v_j}\pa_{x_i}f\|_{L^{q}}\cr
&\quad + (q(d+2)-d)(1 + \|\psi\|_{L^\infty} )\|\pa_{v_i}\pa_{v_j} f\|_{L^{q}}^{q},
\end{align*}
and subsequently
\[
\frac{d}{dt}\|\pa_{v_i}\pa_{v_j} f\|_{L^{q}} \leq \|\pa_{v_i}\pa_{x_j}f\|_{L^{q}} + \|\pa_{v_j}\pa_{x_i}f\|_{L^{q}} + \lt(d+2-\frac{d}{q}\rt)(1 + \|\psi\|_{L^\infty} )\|\pa_{v_i}\pa_{v_j} f\|_{L^{q}}.
\]
Hence we have
\[
\frac{d}{dt}\|\nabla_v^2 f\|_{L^{q}} \leq\|\nabla_v^2 f\|_{L^{q}} + \|\nabla_x \nabla_v f\|_{L^{q}} + C\lt(d+2-\frac{d}{q}\rt)\|\nabla_v^2  f\|_{L^{q}}.
\]
Finally, we combine all of the above estimates to have
\begin{align*}
&\frac{d}{dt}\lt(\|\nabla_x^2 f\|_{L^{q}} + \|\nabla_x \nabla_v  f\|_{L^{q}} + \|\nabla_v^2 f\|_{L^{q}} \rt)\cr
&\quad \leq Cd\lt( \|f\|_{L^{q}} +  \|\nabla_x f\|_{L^{q}} + \|\nabla_v f\|_{L^{q}} \rt)+ C( 1  + \|\nabla_x u\|_{L^\infty}^2)(\|\nabla_v \nabla_x f\|_{L^{q}} + \|\nabla_v^2 f\|_{L^{q}})\cr
&\qquad + C\|\nabla_v f\|_{W^{1,q}} \|\nabla_x^2 u\|_{L^{q}} +C\lt(d+1-\frac{d}{q}\rt)(\|\nabla_x \nabla_v f\|_{L^{q}} +\|\nabla_v^2  f\|_{L^{q}}) + \|\nabla_x^2 f\|_{L^{q}} \cr
&\quad \leq Cd\|f\|_{W^{1,q}}+ Cd( 1  + \|\nabla_x u\|_{L^\infty}^2)( \|\nabla_x^2 f\|_{L^{q}}  + \|\nabla_v \nabla_x f\|_{L^{q}} + \|\nabla_v^2 f\|_{L^{q}})
%\cr
%&\qquad 
+ C\|\nabla_v f\|_{W^{1,q}} \|\nabla_x^2 u\|_{L^{q}},
\end{align*}
where $C > 0$ is independent of $t$. We then combine this with Lemma \ref{lem_f} to obtain
\[
\frac{d}{dt}\|f\|_{W^{2,q}} \leq Cd( 1  + \|\nabla_x u\|_{L^\infty}^2 + \|\nabla_x^2 u\|_{L^{q}})\|f\|_{W^{2,q}},
\]
where $C>0$ is independent of $t$. Applying Gr\"onwall's lemma to the above concludes the desired result.

%%%%%%%%%%%%%%%%%%%%%%%%%
%%
%%
%%\begin{thebibliography}{00}
%%
%%%%%%%%%%%%%%%%%%%%%%%%%

\end{document}